\DeclareMathAlphabet\oldmathcal{OMS}        {cmsy}{b}{n}
\SetMathAlphabet    \oldmathcal{normal}{OMS}{cmsy}{m}{n}
\DeclareMathAlphabet\oldmathbcal{OMS}       {cmsy}{b}{n}
\newtheorem{theorem}{Theorem}[section]
\newtheorem{lemma}[theorem]{Lemma}
\newtheorem{proposition}[theorem]{Proposition}
\newtheorem{corollary}[theorem]{Corollary}
\newtheorem{def/prop}[theorem]{Definition/Proposition}
\newtheorem{definition}[theorem]{Definition}
\theoremstyle{definition}
\newtheorem{remark}[theorem]{Remark}
\newtheorem*{ack}{Acknowledgements}
\newtheorem{example}{Example}[section]
\DeclareSymbolFont{bbold}{U}{bbold}{m}{n}
\DeclareSymbolFontAlphabet{\mathbbold}{bbold}
\def\BOne{\mathchoice{\scalebox{1.16}{$\displaystyle\mathbbold 1$}}{\scalebox{1.16}{$\textstyle\mathbbold 1$}}{\scalebox{1.16}{$\scriptstyle\mathbbold 1$}}{\scalebox{1.16}{$\scriptscriptstyle\mathbbold 1$}}}
\def\fract#1#2{\raise4pt\hbox{$ #1 \atop #2 $}}
\def\bbc{{\mathbb C}}
\def\bbn{{\mathbb N}}
\def\bbp{{\mathbb P}}
\def\bbq{{\mathbb Q}}
\def\bbr{{\mathbb R}}
\def\bbz{{\mathbb Z}}
\def\grl{\lambda}
\def\gro{\omega}
\def\bfa{{\bf a}}
\def\bfk{{\bf k}}
\def\bfw{{\bf w}}
\def\bfx{{\bf x}}
\def\bfz{{\bf z}}
\def\cala{{\mathcal A}}
\def\calc{{\mathcal C}}
\def\cald{{\mathcal D}}
\def\cale{{\mathcal E}}
\def\calf{{\mathcal F}}
\def\calh{{\mathcal H}}
\def\cali{{\mathcal I}}
\def\call{{\mathcal L}}
\def\calo{{\mathcal O}}
\def\calS{{\mathcal S}}
\def\cals{{\oldmathcal S}}
\def\gc{{\mathfrak c}}
\def\ge{{\mathfrak e}}
\def\gh{{\mathfrak h}}
\def\gs{{\mathfrak s}}
\def\gt{{\mathfrak t}}
\def\gu{{\mathfrak u}}
\def\gz{{\mathfrak z}}
\def\gA{{\mathfrak A}}
\def\lra{\longrightarrow}
\def\<{\langle}
\def\>{\rangle}
\def\ra#1{\to}
\def\hook{\mathbin{\hbox to 6pt{%
                 \vrule height0.4pt width5pt depth0pt
                 \kern-.4pt
                 \vrule height6pt width0.4pt depth0pt\hss}}}
\begin{document}

\title{Existence and Non-Existence of Constant Scalar Curvature and Extremal Sasaki Metrics}
\author[C.P. Boyer]{Charles P. Boyer}
\author[H. Huang]{ Hongnian Huang}
\author[E. Legendre]{Eveline Legendre}
\author[C.W. T{\o}nnesen-Friedman]{Christina W. T{\o}nnesen-Friedman}

\address{Charles P. Boyer, Department of Mathematics and Statistics,
University of New Mexico, Albuquerque, NM 87131.}
\email{cboyer@unm.edu} 
\address{Hongnian Huang, Department of Mathematics and Statistics,
University of New Mexico, Albuquerque, NM 87131.}
\email{hnhuang@gmail.com} 
\address{Eveline Legendre\\ Universit\'e Claude Bernard (Lyon 1)\\
Institut Camille Jordan\\ 43 boulevard du 11 novembre 1918
F-69622 Villeurbanne Cedex\\ France}
\email{legendre@math.univ-lyon1.fr}
\address{Christina W. T{\o}nnesen-Friedman, Department of Mathematics, Union
College, Schenectady, New York 12308, USA } \email{tonnesec@union.edu}

\thanks{The first author was partially supported by grant \#519432 from the Simons Foundation. The third author is partially supported by France ANR project BRIDGES No ANR-21-CE40-0017. The fourth author was partially supported by grant \#422410 from the Simons Foundation.}

\date{\today}
\begin{abstract}
We discuss the existence and non-existence of constant scalar curvature, as well as extremal, Sasaki metrics. We prove that the natural Sasaki-Boothby-Wang manifold over the admissible projective bundles over local products\footnote{See Definition \ref{adkahman} for what we mean by ``local product'' here.} of non-negative CSC K\"ahler metrics, as described in \cite{ACGT08}, always has a constant scalar curvature (CSC) Sasaki metric in its Sasaki-Reeb cone.  Moreover, we give examples that show that the extremal Sasaki--Reeb cone, defined as the set of Sasaki--Reeb vector fields admitting a compatible extremal Sasaki metric, is not necessarily connected in the Sasaki--Reeb cone, and it can be empty even in the non-Gorenstein case. We also show by example that a non-empty extremal Sasaki--Reeb cone need not contain a (CSC) Sasaki metric which answers a question posed in \cite{BHLT21}. The paper also contains an appendix where we explore the existence of K\"ahler metrics of constant weighted scalar curvature, as defined in \cite{Lah19}, on admissible manifolds over local products of non-negative CSC K\"ahler metrics.
\end{abstract}
\maketitle

\markboth{CSC Sasaki Metrics}{C.P. Boyer, H. Huang, E. Legendre, and Christina W. T{\o}nnesen-Friedman}

\tableofcontents

\section{Introduction}
In the recent years, powerful tools have been developed to investigate the question of existence of extremal K\"ahler and Sasaki metrics, particularly regarding the more famous cases of Einstein and constant scalar curvature K\"ahler/Sasaki metrics. As expected, these tools, including various notions of stability \cite{CoSz12} and weighted extremal K\"ahler metrics \cite{ApCa18,ApCaLe21}, are now used to explore the extremal problem within families of K\"ahler/Sasaki structures. This is the idea pursued in the present paper where we use weighted extremal K\"ahler metrics together with admissible constructions on projective bundles to describe more precisely some {\it extremal Sasaki--Reeb cones}.

This cone is the Sasakian counterpart of the extremal K\"ahler cone (i.e, on a given a complex manifold, the cone of K\"ahler classes containing extremal K\"ahler metrics) but sits in the Lie algebra $\mathfrak{t}$ of a compact torus $T$ acting on complex cone $(Y,\widehat{J})$. More precisely, for a given $T$--invariant complex cone $(Y,\widehat{J})$, the Sasaki--Reeb cone $\gt^+(Y,\widehat{J})$ of $(Y,\widehat{J})$, is the set of polarizations $\xi\in \mathfrak{t}$ admitting a compatible $T$--invariant radial K\"ahler potential $r\in C^\infty( Y,\bbr^+)$, that is $r$ is proper, $T$--invariant and satisfies $\call_{-\widehat{J}\xi}r=r,\,\; dd^c r>0.$ The Sasaki–Reeb cone admits an equivalent definition in terms of CR-structures induced on the level set of each radial potential as we recall in \S\ref{ss:ExtSas}.

The extremal Sasaki--Reeb cone, say $\ge(Y,\widehat{J})$, is then the subset of polarizations $\xi\in\gt^+(Y,\widehat{J})$ admitting a compatible $T$--invariant radial K\"ahler potential $r\in C^\infty( Y,\bbr^+)$ whose associated Sasaki metric on the level set $\{r=1\}$ is extremal in the sense of \cite{BGS06}. For example, if $(Y,\widehat{J})$ is the cone over a polarized K\"ahler manifold $(X,L)$ admitting an extremal K\"ahler metric in the class $2\pi c_1(L)$ then the regular Sasaki--Reeb vector field (given by the fiberwise $\bbc^*$ action) lies in $\ge(Y,\widehat{J})$.\\ 

Although the Sasaki--Reeb cone is known to be open, strictly convex and polyhedral \cite{Ler02,BGS06,CoSz12,BovC16,ApCaLe21}, very little is known about the extremal Sasaki--Reeb cone. As proved in \cite{BGS06}, transversal scaling of Sasaki structures implies that $\ge(Y,\widehat{J})$ is a subcone of $\gt^+(Y,\widehat{J})$.
Moreover, $\ge(Y,\widehat{J})$ is open, and can be empty or all of $\gt^+(Y,\widehat{J})$. 

One contribution of the present article, is the exhibition of two families of examples of complex cones for which the extremal Sasaki-Reeb cone has interesting properties. \\

\begin{itemize}
\item {\bf Examples}~\ref{nonexistenceex} {\it non-Gorenstein examples,  (i.e. when the polarization is not that
of the canonical or anticanonical bundle), with Sasaki cone of dimension larger than one, where the extremal Sasaki-Reeb cone is empty; such examples were (to the best of our knowledge) previously known only in the Gorenstein case \cite{BovC16}};\\

\item {\bf Examples}~\ref{prob612} {\it showing that the extremal Sasaki-Reeb cone need not be connected}.
\end{itemize}

We have a special interest in constant scalar curvature Sasaki structures (CSCS). We denote $\gc\gs\gc(Y,\widehat{J})$ the set of $\xi \in \gt^+(Y,\widehat{J})$ for which there exists a compatible $T$--invariant radial K\"ahler potential $r\in C^\infty( Y,\bbr^+)$ whose associated Sasaki metric on the level set $\{r=1\}$ has constant scalar curvature. It is known, \cite{BGS06,FOW06} that
\begin{equation}\label{eq:cscINTERS}
    \gc\gs\gc(Y,\widehat{J})= \ge(Y,\widehat{J})\cap \{\xi \in \gt^+(Y,\widehat{J})\,|\, \mbox{Fut}_\xi=0\}
\end{equation}where $\mbox{Fut}_\xi$ is the transversal (or Sasaki) Futaki invariant. Note that $\{\xi \in \gt^+(Y,\widehat{J})\,|\, \mbox{Fut}_\xi=0\}$ is non-empty for any complex cone over a compact Sasaki manifold, see \cite{BHL17}. Note that this is purely a Sasakian phenomenon. For example, $\bbc\bbp^2$ blown up at a point has a nonvanishing Futaki invariant for all K\"ahler classes. However, we also provide an example here for which the intersection \eqref{eq:cscINTERS} is empty.  \\

\begin{itemize}
\item {\bf  Examples} of Sasaki-Reeb cones with compatible extremal Sasaki metrics (i.e, $\ge(Y,\widehat{J})\neq \emptyset$) but no compatible CSCS metrics (i.e, $\gc\gs\gc(Y,\widehat{J})=\emptyset$). See Theorem \ref{nocscthm} and Example \ref{prob612}.
\end{itemize} This last family of examples answers a question posed in  \cite{BHLT21}.\\

\begin{remark}
Note that the first and third kind of examples above have analogies in K\"ahler geometry. For instance, there are well-known examples by
Levine \cite{Lev85} of K\"ahler manifolds with no extremal metrics whatsoever and none of Calabi's  extremal K\"ahler metrics on the Hirzebruch surfaces \cite{Cal82} are CSC. Note also that Levine's examples have no compact subgroups in their automorphisms group which implies that the Sasaki-Reeb cone associated to any of these examples is only one dimensional (with no extremal representative).  
\end{remark}

All the examples above are built in the context of the Sasakian geometry $(M,\cals)$ as circle bundles over admissible K\"ahler manifolds as defined in \cite{ACGT08}, denoted here by $N^{ad}$. We often refer to this as the Boothby-Wang construction \cite{BoWa58} which occurs in the more general symplectic-contact category.  In particular, our main result is the following condition ensuring that the CSCS cone is non-empty for some of these important cases. 
\begin{theorem}\label{theoIntroExist}[see Theorem \ref{CSCexistence}]
Let $\Omega$ be an admissible Hodge K\"ahler class on the admissible projective bundle $N^{ad} =\bbp(E_0 \oplus E_{\infty}) \lra N$, where $N$ is a compact K\"ahler manifold which is a local product of nonnegative CSCK metrics\footnote{See Definition \ref{adkahman} for what we mean by ``local product'' here.}.  Then for the complex cone $(Y,\widehat{J})$ over $(N^{ad},\Omega)$, the Sasaki-Reeb cone $\gt^+(Y,\widehat{J})$ (relative to the $2$--dimensional torus obtained from the double tower of fibrations) has a non-empty subset $\gc\gs\gc(Y,\widehat{J})$ of compatible radial K\"ahler potentials with associated CSCS metrics.
\end{theorem}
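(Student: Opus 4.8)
The plan is to reduce the existence of a CSC Sasaki metric in the Sasaki--Reeb cone to an explicitly solvable boundary-value problem coming from the admissible construction, and then to intersect the resulting solvability region with the (nonempty) zero locus of the transversal Futaki invariant. First I would exploit the double tower of fibrations to parametrize the $2$-dimensional cone $\gt^+(Y,\widehat{J})$: writing a Reeb field as $\xi=\xi_{\mathrm{reg}}+t\,\xi_{\mathrm{fib}}$, where $\xi_{\mathrm{reg}}$ is the regular (Boothby--Wang) field and $\xi_{\mathrm{fib}}$ generates the $S^1$-action on the $\bbp^1$-fibers of $N^{ad}=\bbp(E_0\oplus E_\infty)\lra N$, the projectivized cone becomes an open interval of values of $t$. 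For each such $\xi$ the transverse K\"ahler geometry of a compatible Sasaki metric is an admissible K\"ahler metric on $N^{ad}$ in the sense of \cite{ACGT08}, encoded by a momentum profile $\Theta=\Theta_t$ on $[-1,1]$, and by the Sasaki--weighted-K\"ahler dictionary of \cite{ApCaLe21,Lah19} its transversal scalar curvature coincides with a weighted scalar curvature of this K\"ahler metric, the weight $v_\xi$ being an explicit affine function of the fiber momentum coordinate $z$ raised to a power determined by $\dim Y$.

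With this dictionary in place, I would write the CSC Sasaki condition as the statement that the weighted scalar curvature is constant. In the admissible framework this becomes a second-order linear ODE for $\Theta_t$ whose coefficients involve the constant scalar curvatures $s_a\geq 0$ of the factors $N_a$ and the weight $v_\xi$; integrating it twice against the endpoint data $\Theta_t(\pm1)=0$ and $\Theta_t'(\pm1)=\mp2$ (the conditions forcing the metric to close up smoothly over the two sections of the projective bundle) determines $\Theta_t$ as an explicit expression in $z$, $t$ and the scalar-curvature constant. The only remaining requirement for $\Theta_t$ to define a genuine Sasaki metric is the strict positivity $\Theta_t>0$ on the open interval $(-1,1)$. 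The analogous weighted-extremal profile is given by an explicit polynomial which, over nonnegative local products, stays positive on $(-1,1)$ for an open range of $t$; these $\xi$ therefore lie in $\ge(Y,\widehat{J})$, so the extremal Sasaki--Reeb cone contains an explicit subinterval.

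I would then invoke the identity \eqref{eq:cscINTERS}, $\gc\gs\gc(Y,\widehat{J})=\ge(Y,\widehat{J})\cap\{\mbox{Fut}_\xi=0\}$. For each extremal $\xi$ the transversal scalar curvature is affine in $z$, and its slope---equivalently the transversal Futaki invariant $\mbox{Fut}_{\xi}$---is a continuous (indeed rational) function of $t$. By \cite{BHL17} the locus $\{\mbox{Fut}_\xi=0\}$ meets $\gt^+(Y,\widehat{J})$, so this slope vanishes at some parameter $t_0$; equivalently, imposing that the extremal affine function be constant singles out $t_0$ directly. The theorem then follows once one checks that at $t_0$ the profile $\Theta_{t_0}$ is still strictly positive on $(-1,1)$: for then $\xi(t_0)$ is simultaneously extremal and Futaki-flat, hence lies in $\gc\gs\gc(Y,\widehat{J})$.

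The crux, and the step I expect to be the main obstacle, is precisely this positivity at $t_0$. Solvability of the ODE and the vanishing of the Futaki invariant are purely algebraic, and there is in general no reason for the resulting $\Theta_{t_0}$ to avoid an interior zero---indeed the non-existence examples elsewhere in the paper show this can fail once negative scalar curvatures are admitted. This is where the hypothesis $s_a\geq 0$ enters decisively: nonnegativity of the CSC K\"ahler factors pins down the signs of the coefficients produced by the double integration and yields the a priori estimate forcing $\Theta_{t_0}$ to remain positive across $(-1,1)$. I would package this positivity estimate through the constant weighted scalar curvature existence result for admissible manifolds over nonnegative local products established in the appendix, applied to the weight $v_{\xi(t_0)}$, and then transport the resulting CSC-weighted K\"ahler metric back across the Sasaki--weighted-K\"ahler dictionary to obtain the desired CSC Sasaki metric, proving $\gc\gs\gc(Y,\widehat{J})\neq\emptyset$.
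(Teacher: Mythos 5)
Your overall frame (the weighted-extremal dictionary, the reduction to the one-parameter family $f=c\,\gz+1$ parametrizing the projectivized $2$-dimensional cone, and the identity $\gc\gs\gc=\ge\cap\{\mathrm{Fut}_\xi=0\}$) matches the paper, but the two steps you identify as decisive are handled differently there, and as written both contain genuine gaps. First, the existence of a parameter $t_0$ \emph{in the two-dimensional subcone} where the Futaki invariant vanishes does not follow from the nonemptiness of $\{\mathrm{Fut}_\xi=0\}$ quoted from \cite{BHL17}: that statement concerns the full Sasaki--Reeb cone, which here has dimension $d_0+d_\infty+2+\dim\gt$, and the zero it produces need not lie on your one-parameter family. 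The paper's actual mechanism is to restrict the Einstein--Hilbert functional $H_S(c)=S_c^{m+1}/V_c^{m}$ to the subcone, prove by explicit asymptotics of $\alpha_{0,-(m+1)}$ and $\beta_{0,-m}$ that $H_S(c)\to+\infty$ as $c\to\pm1^{\mp}$, and identify critical points of $H_S$ with solutions of the single scalar equation \eqref{CSCobstruction}, equivalently $\mathrm{Fut}_{K_c}\bigl(\tfrac{d}{dc}K_c\bigr)=0$; since the admissible $(c\,\gz+1,m+2)$-extremal metric has weighted scalar curvature affine in $\gz$, this one equation is exactly the CSC condition. The nonnegativity hypothesis enters precisely here, through $\beta_{0,-m}>0$ and the positivity of the boundary coefficients $\gamma_0,\gamma_\infty$, which make the prefactor in $H_S'(c)$ positive and force the blow-up at the ends; this is the step your argument is missing.

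Second, the positivity of the momentum profile that you single out as the main obstacle is not actually an issue in this setting, and the fix you propose would not work. For $N$ a nonnegative local product, Proposition 11 of \cite{ACGT08} together with Theorem 1 of \cite{ApMaTF18} already supplies an admissible $(c\,\gz+1,p)$-\emph{extremal} metric (positive profile included) in $\Omega$ for \emph{every} $c\in(-1,1)$ and every $p$, so the entire subcone lies in $\ge(Y,\widehat{J})$ and no separate positivity check at $t_0$ is required. The appendix result (Theorem \ref{WCSCexistence}) cannot serve as a substitute: it solves the constant \emph{weighted} scalar curvature equation $A_1=0$, which is a different equation from the CSC Sasaki condition $A_1-c\,A_2=0$ (they agree only at $c=0$); its hypothesis $p>\max\{m+1,2d_0+2,2d_\infty+2\}$ can fail for the relevant weight $p=m+2$ (see \S\ref{but}); and it is itself proved by the same Einstein--Hilbert properness technique, so it is not an independent input with which to close the argument.
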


In previous work \cite{BoTo13,BoTo14a,BoTo19a,BoTo20a} the first and last authors have given many examples  proving the existence of constant scalar curvature Sasaki metrics. The method of proofs used the admissible construction of projective bundles developed in \cite{ACGT08} together with the Sasaki join construction \cite{BGO06}. This latter construction, which can be thought of as the Sasaki analogue of de Rham decomposition of K\"ahler manifolds, divides Sasaki manifolds into two types, those which can be described as joins are said to be {\it decomposable}; whereas, those that cannot be described by joins are {\it indecomposable}. For a detailed description see \cite{BHLT16}. 

In the rest of the paper, the examples and results are stated in terms of CR structures and Sasaki metrics. In particular, Theorem \ref{CSCexistence} below is the Sasaki version of Theorem \ref{theoIntroExist}.  Subsection \ref{ss:ExtSas}, and in particular Remark~\ref{rem:ConeToSasaki}, makes the transition between the CR-Sasaki and the complex cone dictionaries.  

The key novel idea in the proof of Theorem \ref{theoIntroExist} (Theorem \ref{CSCexistence}) is to employ the link, discovered by \cite{ApCa18}, between the weighted extremal K\"ahler metrics and Sasaki extremal metrics and then use the properness of the Einstein-Hilbert functional \cite{BHL17} to discover CSCS solutions in the admissible setting.

\begin{ack}
The authors thank Vestislav Apostolov and Ivan Cheltsov for helpful and encouraging correspondence, and the anonymous referee for their careful reading and valuable comments.
\end{ack}

\section{Extremal Sasaki geometry}
\subsection{Brief review on extremal Sasaki structures}\label{ss:ExtSas}
To explain the notation and point of view we are using in this paper, let recall that a Sasakian structure on a manifold $M^{2n+1}$ is is a special type of contact metric structure $\cals=(\xi,\eta,\Phi,g)$ with underlying almost CR structure $(\cald,J)$ where $\eta$ is a contact form such that $\cald=\ker\eta$, $\xi$ is its Reeb vector field, $J=\Phi |_\cald$, and $g=d\eta\circ (\BOne \times\Phi) +\eta\otimes\eta$ is a Riemannian metric. $\cals$ is a Sasakian structure if $\xi$ is a Killing vector field, so we call it a Sasaki-Reeb vector field, and the almost CR structure is integrable, i.e. $(\cald,J)$ is a CR structure. We refer to \cite{BG05} for the fundamentals of Sasaki geometry. We call $(\cald,J)$ a {\it CR structure of Sasaki type}, and $\cald$ a {\it contact structure of Sasaki type}. We shall always assume that the Sasaki manifold $M^{2n+1}$ is compact and connected.

Let $\cals=(\xi,\eta,\Phi,g)$ be a $T$--invariant Sasakian structure where $T$ is a compact torus acting effectively on $M$, that is $T\subset\mbox{Diff}(M)$ and such that $\xi\in \mbox{Lie } T=:\gt$. In this setting, the Sasaki--Reeb cone of $(M,\cals)$, relatively to $T$, is $$\gt^+(\cald,J) := \{\tilde{\xi}\in \gt\,|\, \eta(\tilde{\xi})>0\}.$$ Each $\tilde{\xi}\in \gt^+(\cald,J)$ determines a $T$--invariant Sasakian structure, compatible with $(\cald,J)$ with contact form $\eta(\tilde{\xi})^{-1}\eta$. Therefore, sometimes in this paper, and in the literature, elements of $\gt^+(\cald,J)$ are called Sasakian metrics.

If instead of fixing a contact structure, we fix a Sasaki--Reeb vector field $\xi$ and vary the contact structure, we obtain a contractable space of Sasakian structures (all compatible with $\xi$), namely
\begin{equation}\label{Sasspace}
{\mathcal S}(\xi,\bar{J})=\{\eta+d_B^c\varphi~|~\varphi\in C^\infty_B(M),\,  (\eta+d^c_B\varphi)\wedge(d\eta +dd_B^c\varphi)^n\neq 0\},
\end{equation}
where $d^c_B\varphi= -d\varphi\circ \Phi$. The space ${\mathcal S}(\xi,\bar{J})$ is an infinite dimensional Fr\'echet manifold. When starting with a $T$--invariant Sasakian structure as above, it is convenient to consider only the space of  $T$--invariant isotopic contact structures ${\mathcal S}(\xi,\bar{J})^T$. Note that the Sasaki--Reeb cones, relative to $T$, of elements in ${\mathcal S}(\xi,\bar{J})^T$ all agree.

All the contact structures in ${\mathcal S}(\xi,\bar{J})$ have the same basic cohomology class $[d\eta]_B$ in the basic cohomology group $H^{1,1}(\calf_\xi)$ as described in Section 7.2.2 of \cite{BG05}, and are compatible with the same holomorphic transversal structure $(\xi,\bar{J})$. Each representative $\eta\in {\mathcal S}(\xi,\bar{J})$ determines a transverse K\"ahler structure with transverse K\"ahler metric $g^T=d\eta\circ (\BOne\times \Phi)$. Note that $d\eta$ is not exact as a basic cohomology class, since $\eta$ is not a basic 1-form. We want to search for a `preferred' Sasakian structure $\cals_\varphi$ which represents the basic cohomology class $[d\eta]_B$. This leads to the study \cite{BGS06} of the Calabi functional 
\begin{equation}\label{calabifunct}
\cale_2(g)=\int_Ms^2_gdv_g
\end{equation}
where the variation is taken over the space $\calS(\xi,\bar{J})$. As in the K\"ahler case the Euler-Lagrange equation is a 4th order PDE 
whose critical points are those Sasaki metrics whose transversal scalar curvature $s_g$ is a {\it Killing potential}, that is, its gradient is transversely holomorphic, or, equivalently, the contact vector field of $s_g$ preserves the CR structure. Such Sasaki metrics (structures) are called {\it extremal}. Note that a $T$--invariant Sasakian structure $\cals=(\xi,\eta,\Phi,g)$ with a contact moment map $\mu:M\rightarrow \gt^*$, is extremal if $s_g$ is the pull-back by $\mu$ of an affine-linear function on $\gt^*$. In practice, this is a convenient way to check that a Sasakian structure is extremal.    

In this article we study the following set, that we call the {\it extremal Sasaki--Reeb cone} $$\ge(\cald,J) := \{\xi \in \gt^+(\cald,J)\,|\, \exists \mbox{ an extremal Sasakian structure in } {\mathcal S}(\xi,\bar{J})^T\}.$$ 
\begin{remark}\label{rem:ConeToSasaki}
To recover the definition of the extremal Sasaki--Reeb cone given in the introduction, recall from \cite{ApCaLe21} that ${\mathcal S}(\xi,\bar{J})^T$ is in one to one correspondence with the space of $T$--invariant radial potentials, relative to $\xi$, on the complex cone $(Y,\widehat{J})$ over $M$. The complex structure $\widehat{J}$ on $Y$ depends on $\xi$ and on the transversal holomorphic structure $(\xi,\bar{J})$. However, given two commuting Sasaki--Reeb vector fields $\xi,\tilde{\xi}\in \gt^+(\cald,J)$, their associated complex cones are biholomorphic \cite{HeSu12}. Therefore, $\gt^+(\cald,J)= \gt^+(Y,\widehat{J})$ and $\ge(\cald,J)= \ge(Y,\widehat{J})$ where we identify a Reeb vector field $\xi$ with its lift to $(Y,\widehat{J})$.    
\end{remark}

An important special case of extremal Sasaki structures are those of constant scalar curvature (CSCS) in which case the gradient of $s_g$ is the zero vector field. An extremal Sasakian structure is CSC if and only if the transversal Futaki invariant $Fut_\cals=Fut_\xi:\gh^T(\xi,\bar{J})\lra \bbr$ vanishes. Here $\gh^T(\xi,\bar{J})$ is the Lie algebra of transversally holomorphic vector fields, and we use the alternative definition (see eg. \cite{BHLT15})    
\begin{equation}\label{Futinv}
Fut_\xi(X)= \int_M (s_g- \overline{s_g})\eta(X) \eta\wedge (d\eta)^n
\end{equation} 
with $\overline{s_g}:=\int_M s_g\, \eta(X) \eta\wedge (d\eta)^n/\int_M\eta\wedge (d\eta)^n$.
Note that $Fut_\xi$ depends only on the isotopy class ${\mathcal S}(\xi,\bar{J})$ and not on the element used to represent it.

Thus, as described in the introduction, we get nested cones
$$\gc\gs\gc(\cald,J)\subset \ge(\cald,J)\subset \gt^+(\cald,J)$$ which are the main topics of this paper.

\subsection{The weighted extremal case}
One effective way of treating the indecomposable case alluded to in the introduction is to use the weighted extremal construction developed in \cite{ApCa18,ApCaLe21,ApJuLa21}. It can also be considered as a generalization of \cite{BoTo21} where the projective bundle is a twist one stage 3 Bott orbifold. Here we briefly review the weighted extremal approach. Let $(N,g,\omega)$ be a K\"ahler manifold of complex dimension $m$, $f$ a positive Killing potential on $N$, and (weight) $p\in \bbr$.
Then the  $(f,p)$-Scalar curvature of $g$ is given by
\begin{equation}\label{weightedscal}
Scal_{f,p}(g)= f^2 Scal(g)  -2(p-1) f\Delta_g f - p(p-1)|df|^2_g,
\end{equation}
If $Scal_{f,p}(g)$ is a Killing potential, $g$ is said to be a {\it $(f,p)$-extremal K\"ahler metric}.  
The case $p=2m$ has been studied by several people and is interesting due to the fact that $Scal_{f,2m}(g)$ computes the scalar curvature of the Hermitian metric $h = f^{-2} g$. Note that from time to time we will also simply call an $(f,p)$-extremal K\"ahler metric a {\it weighted extremal metric} (without specifying
$p$ and $f$). The definition of the latter term as well as the definition of {\it weighted scalar curvature} is in fact a bit more general and is introduced by A. Lahdili in \cite{Lah19}. For our purpose the definition in \eqref{weightedscal} (which agrees with (2) in \cite{ApCa18}) shall suffice.

The case of interest to us here is when $p=m+2$. This case is related to the study of extremal Sasaki metrics \cite{ApCa18, ApCaLe21, ApJuLa21}. Indeed, if we assume that the K\"ahler class $[\omega/2\pi]$ is an integer class, then $\frac{Scal_{f,m+2}(g)}{f}$ is equal to the transverse scalar curvature of a certain Sasaki structure (determined by $f$) in the Sasaki-Reeb cone of the Boothby-Wang constructed Sasaki manifold over $(N,g,\omega)$. More precisely, if $\chi$ is the Reeb vector field of the Sasaki structure coming directly from the Boothby-Wang construction over $(N,g,\omega)$ and $f$ is viewed as a pull-back to the Sasaki manifold, then $\xi:= f\chi$ is a Reeb vector field in the Sasaki-Reeb cone giving a new Sasaki structure. While the pull-back from $N$ of $Scal(g)$ is the Tanaka-Webster scalar curvature of the Tanaka-Webster connection induced by $\chi$, the expression $\frac{Scal_{f,m+2}(g)}{f}$ pulls back from $N$ to be the Tanaka-Webster scalar curvature of the Tanaka-Webster connection induced by $\xi$. The latter is then also identified with the transverse scalar curvature of the Sasaki structure defined by $\xi$. This fact is seen from the details of the proof of Lemma 3 in \cite{ApCa18}. As also follows from Theorem 1 of \cite{ApCa18}, the Sasaki structure determined by $f$ is extremal if and only if $g$ is $(f,m+2)$-extremal. In this case, the corresponding Sasaki structure is CSC if and only if $\frac{Scal_{f,m+2}(g)}{f}$ is constant.

\subsection{Admissible K\"ahler manifolds}\label{briefadmintro}
In this paper we study the Sasakian geometry of Boothby-Wang manifolds over the admissible K\"ahler manifolds defined in \cite{ACGT08}. 
Admissible K\"ahler manifolds are very tractable for exploring the existence of K\"ahler metrics with special geometries. The original case in point is Calabi's construction of extremal K\"ahler metrics on Hirzebruch surfaces and related higher dimensional ruled manifolds \cite{Cal82}.
Subsequently these types of constructions have been explored by many (e.g. \cite{Gua95,Hwa94,HwaSi02,Koi90,KoSa86,LeB91b,PePo98,Sim91,To-Fr98}). 
For full details for these manifolds we refer to e.g. \cite{ACGT08} or Sections 2.1 and 2.2 of \cite{ApMaTF18}. Here we will just give a brief introduction:

\begin{definition}\label{adkahman}
Suppose $N$ is a compact K\"ahler manifold and that there exist simply connected K\"ahler manifolds $N_a$ of complex dimension $d_a$ such that $N$ is covered by $\prod_{a\in \cala}N_a$ with $\cala=\{1,\ldots,I\}$. Assume that on each $N_a$ there is an $(1,1)$ form
$\omega_a$, which is a pull-back of a tensor (also denoted by $\omega_a$) on $N$, such that either $\omega_a$ or $-\omega_a$ is a K\"ahler form of a constant scalar curvature K\"ahler (CSCK) metric $g_a$ or $-g_a$. 

We will say that $N$ is ``{\bf covered by a product} of simply connected CSC K\"ahler manifolds, $(N_a,\pm \gro_a, \pm g_a)$'' or that $N$ is ``a {\bf local product} of CSC K\"ahler manifolds, $(N_a,\pm \gro_a, \pm g_a)$.''

Let $E_0$ and $E_\infty$ be projectively flat Hermitian holomorphic vector bundles on $N$ of ranks $d_0+1$ and $d_\infty+1$, respectively, which satisfy the condition
\begin{equation}\label{ast}
\frac{c_1(E_\infty)}{d_\infty+1} -\frac{c_1(E_0)}{d_0+1} = \frac{1}{2\pi}\sum_{a\in \cala}[\gro_{a}].
\tag{\textasteriskcentered}
\end{equation}

Then the projective bundle $N^{ad}=\bbp(E_0\oplus E_\infty)\lra N$ is said to be {\bf admissible}. 
\end{definition}
Note that the complex dimension of $N^{ad}$ is $m=\sum_{a\in{\hat\cala}}d_a+1$. Note also that the Equation \eqref{ast} puts topological constraints on the manifolds $N^{ad}$. In particular, an admissible bundle is never trivial. 

The fibers of such projective bundles admit an $S^1$ action which gives rise to (a choice of a fixed background K\"ahler metric and) a moment map construction $\gz:N^{ad}\rightarrow [-1,1]$ as described in \cite{HwaSi02,ACGT08}.
This gives rise to a family of K\"ahler metrics which are defined on all of $N^{ad}$ and locally described on $N^{ad}_0=\gz^{-1}(-1,1)$ by
\begin{equation}\label{g}
g=\sum_{a\in\hat{\cala}}\frac{1+x_a\gz}{x_a}g_a+\frac {d\gz^2}
{\Theta (\gz)}+\Theta (\gz)\theta^2,\quad
\omega = \sum_{a\in\hat{\cala}}\frac{1+x_a\gz}{x_a}\omega_{a} + d\gz \wedge
\theta,
\end{equation}
where $\theta$ is a connection 1-form,  $\Theta$ is a smooth function with domain containing
$(-1,1)$, and $x_a$ for $a\in \cala$, are real numbers of the same sign as
$g_{a}$ and satisfying $0 < |x_a| < 1$. Here $\hat{\cala}$ is the finite set $\cala=\{1,\ldots,I\}$ if $d_0=d_\infty=0$, and $\hat{\cala}$ equals $\cala\cup \{0\}$ if $d_0>0,d_\infty=0$, equals $\cala\cup \{\infty\}$ if $d_0=0,d_\infty>0$, and equals $\cala\cup \{0\}\cup \{\infty\}$ if $d_0>0,d_\infty>0$. Further, for $d_0\neq 0$, we set $x_0=1$ and let $g_0$ be the Fubini-Study metric on $\bbc\bbp^{d_0}$ with scalar curvature $2d_0(d_0+1)$ (or $2d_0 s_0$ with $s_0=d_0+1$). Similarly, for
$d_\infty\neq 0$, we set $x_\infty=-1$ and let $-g_\infty$ be the Fubini-Study metric on $\bbc\bbp^{d_\infty}$ with scalar curvature $2d_\infty(d_\infty+1)$ (or $-2d_\infty s_\infty$ with $s_\infty=-(d_\infty+1)$). Moreover, $\Theta$ satisfies the following conditions:
\begin{align}
\label{positivity}
(i)\ \Theta(\gz) > 0, \quad -1 < \gz <1,\quad
(ii)\ \Theta(\pm 1) = 0,\quad
(iii)\ \Theta'(\pm 1) = \mp 2.
\end{align}
The triple $(N^{ad},\gro,g)$ where $N^{ad}$ is given in Definition \ref{adkahman} and $(\gro, g)$ in Equation \eqref{g}, is called an {\bf admissible K\"ahler manifold} with admissible K\"ahler metric $g$.

With all else being fixed, the {\bf admissible K\"ahler class} $\Omega=[\omega]$ is determined by the choice of $x_a$ for $a\in \cala$.
If the $x_a$ values are all rational, then, up to scale,  $\Omega$ is a rational cohomology class. In this case, and for convenience, we simply say that $\Omega$ is rational.
Note that a rational admissible K\"ahler class can be rescaled to be an integer class. In general, any rescale of an admissible K\"ahler class will also be called admissible.

\section{Existence and non-existence}\label{E&NE}
In addition to the brief introduction above, we refer the reader to \cite{ACGT08} (or the more recent summary in Sections 2.1 and 2.2 of \cite{ApMaTF18}) for a detailed description of admissible manifolds, their admissible metrics, and admissible K\"ahler classes. Except for a couple of exceptions (pointed out below), we stay faithful to the notation used in \cite{ACGT08} and \cite{ApMaTF18}. In particular, we have a moment map $\gz: N^{ad} \rightarrow [-1,1]$ (denoted by $z$ in  \cite{ACGT08} and \cite{ApMaTF18}) associated to a natural $S^1$-action of the admissible K\"ahler forms. Now, for the case where $N$ is a local product\footnote{See Definition \ref{adkahman} for what we mean by ``local product'' here.} of {\em non-negative} CSCK metrics, by Proposition 11 of \cite{ACGT08} and (the second half of) Theorem 1 in \cite{ApMaTF18} we have that every admissible K\"ahler class admits an admissible $(c\,\gz+1,p)$-extremal metric for any choice of $c\in (-1,1)$ and $p\in \bbr$. 
In more detail, when $c=0$ an $(c\,\gz+1,p)$-extremal metric is just an extremal metric and we appeal to Proposition 11 of \cite{ACGT08}. When $0<|c|<1$, by rescaling we can
let $f=|\gz+{\mathbf a}|$, where ${\mathbf a} = 1/c$ and appeal to (the second half of) Theorem 1 in \cite{ApMaTF18} 
\footnote{Note that the assumption of ${\mathbf a} >1$ (and so $|\gz+{\mathbf a}|= \gz+{\mathbf a}$) in \cite{ApMaTF18} is merely practical and all the arguments are easily adapted to include the ${\mathbf a}<-1$ (and $|\gz+{\mathbf a}|= -\gz-{\mathbf a}$) case as well.}.
Alternatively one can recast Section 2.3 as well as the proof of Theorem 3.1 in \cite{ApMaTF18}, replacing $f=|z+{\mathbf a}|$ with $f=c\,\gz+1$ and reprove the above existence result in a self-contained manner. For simplicity, we shall not summarize this whole process, but only extract exactly what we need for our purposes here.

\subsection{Sufficient conditions for CSC Sasaki metrics in the Sasaki-Reeb cone}\label{cscsect}
Here we will assume that the admissible K\"ahler class $\Omega$ is rational and so there is a Boothby-Wang constructed Sasaki manifold $(M,\cals)$ given by an appropriate re-scale of $\Omega$. The (re-scales of) Reeb vector fields induced by the Killing potentials $c\,\gz+1$, for 
$c\in (-1,1)$, span a 2-dimensional subcone of the Sasaki-Reeb cone. We note that the Sasaki-Reeb cone $\gt^+(\cals)$ of $(M,\cals)$ has dimension $d_0+d_\infty +2 + \dim(\gt)$ where $\gt$ is the Lie algebra of the maximal torus of the automorphism group $\gA\gu\gt(N)$ of the K\"ahler manifold $N$. We also note that $c=0$ corresponds to
the initial ray (since a constant Killing potential $f$ corresponds to a rescale of the initial Reeb vector field coming from the Boothby-Wang construction).
We are ready for

\begin{theorem}\label{CSCexistence}
Suppose $\Omega$ is a rational admissible K\"ahler class on the admissible manifold $N^{ad}=\bbp(E_0 \oplus E_{\infty}) \lra N$, where $N$ is a compact K\"ahler manifold which is a local product\footnote{See Definition \ref{adkahman} for what we mean by ``local product'' here.} of nonnegative CSCK metrics. 
Let $(M,\cals)$ be the Boothby-Wang constructed  Sasaki manifold given by an appropriate rescale of $\Omega$. Then the corresponding Sasaki-Reeb cone will always have a
(possibly irregular) CSC-ray (up to isotopy).
\end{theorem}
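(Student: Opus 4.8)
The plan is to produce the CSC ray inside the $2$--dimensional subcone of $\gt^+(\cals)$ spanned by the Reeb fields $\xi_c=(c\,\gz+1)\chi$ with $c\in(-1,1)$, where $\chi$ is the initial Boothby--Wang Reeb field and $\gz$ the fibre moment map. The first step is to note that this entire subcone is already extremal. Since $N$ is a local product of nonnegative CSCK metrics, the existence statement recalled above (Proposition~11 of \cite{ACGT08} together with the second half of Theorem~1 of \cite{ApMaTF18}) produces, for every $c\in(-1,1)$, an admissible $(c\,\gz+1,\,m+2)$--extremal K\"ahler metric $g_c$ in the class $\Omega$, where $m=\dim_{\bbc}N^{ad}$. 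By the Apostolov--Calderbank correspondence (Theorem~1 of \cite{ApCa18}, see also \cite{ApCaLe21}) the Sasaki structure with Reeb field $\xi_c$ is then extremal, so the whole open subcone lies in $\ge(\cals)$. In view of the identity \eqref{eq:cscINTERS}, it remains only to find one $c_*\in(-1,1)$ at which the transversal Futaki invariant of $\xi_{c_*}$ vanishes.

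Next I would make the CSC condition completely explicit. By the same correspondence, $\xi_c$ is CSC precisely when its transverse scalar curvature $\frac{Scal_{c\,\gz+1,\,m+2}(g_c)}{c\,\gz+1}$ is constant. Because the metric is admissible and every factor transverse to the $\gz$--direction is CSC (the base factors by hypothesis, the $\bbc\bbp^{d_0}$ and $\bbc\bbp^{d_\infty}$ directions by the Fubini--Study normalization), the weighted scalar curvature is the pull-back of an affine function of the single variable $\gz$, say $Scal_{c\,\gz+1,\,m+2}(g_c)=A(c)\,\gz+B(c)$. Hence the quotient is constant if and only if
\begin{equation*}
A(c)=c\,B(c),
\end{equation*}
a single equation in the single parameter $c$. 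A further consequence of this affine form is that the extremal vector field of $\xi_c$ (the Hamiltonian of its transverse scalar curvature) points in the $\gz$--direction, hence lies in the $2$--dimensional subcone itself.

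The final step is to solve $A(c)=c\,B(c)$, and here I would use the Einstein--Hilbert functional of \cite{BHL17}. Restricted to the subcone it is a scale-invariant function $H(c)$ of $c$ alone whose differential along the subcone is, up to a positive factor, $Fut_{\xi_c}$ evaluated on $\gz\chi$. Since the extremal field of $\xi_c$ lies in the subcone, the standard identity $Fut_{\xi_c}(\text{extremal field})=\int_M(s_{g_c}-\overline{s_{g_c}})^2\,\eta\wedge(d\eta)^n\ge 0$ shows that $H'(c)=0$ forces $s_{g_c}$ to be constant; thus the critical points of $H$ on the subcone are exactly the CSC rays, equivalently the solutions of $A(c)=c\,B(c)$. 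The properness of the Einstein--Hilbert functional established in \cite{BHL17} guarantees that $H(c)\to\infty$ as $c$ approaches the boundary rays $c=\pm 1$ (at which $\eta(\xi_c)=c\,\gz+1$ degenerates and $\xi_c$ reaches $\partial\,\gt^+(\cals)$), so $H$ attains an interior minimum at some $c_*\in(-1,1)$. At $c_*$ the structure $\xi_{c_*}$ is both extremal and Futaki--flat, hence CSC by \eqref{eq:cscINTERS}; the value $c_*$ need not be rational, which is the source of the ``possibly irregular'' clause, and the CSC metric is the extremal representative in the isotopy class of $\xi_{c_*}$.

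The step I expect to be the main obstacle is the last one: although \cite{BHL17} supplies the properness of the Einstein--Hilbert functional in general, one must check that the restriction to this admissible subcone really is proper and that its minimum is forced to the interior rather than escaping to $c\to\pm 1$. Concretely this amounts to controlling the boundary behaviour of $H(c)$ --- equivalently the sign of $A(c)-c\,B(c)$ as $c\to\pm 1$ --- in terms of the admissible profile data $\Theta$ and the normalization constants of the metric. Verifying the structural claim that $Scal_{c\,\gz+1,\,m+2}(g_c)$ is affine in $\gz$ (so that the one-parameter reduction is legitimate) is the other point requiring the detailed admissible machinery of \cite{ACGT08,ApMaTF18}; once both are in hand, the interior critical point, and with it the CSC ray, follows.
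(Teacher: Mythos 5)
Your proposal is correct and follows essentially the same route as the paper: restrict to the two-dimensional subcone of Reeb fields $(c\,\gz+1)\chi$, use the admissible $(c\,\gz+1,m+2)$-extremal metrics to make the whole subcone extremal, and obtain a CSC ray as an interior critical point of the Einstein--Hilbert functional, whose blow-up as $c\to\pm1$ the paper verifies by explicit asymptotics of the moment integrals (this is precisely where the nonnegativity of the CSCK factors enters, via $S_c=2\beta_{0,-m}>0$). The only cosmetic difference is that you identify critical points of $H$ with CSC rays via $Fut_{\xi_c}$ evaluated on the extremal field, while the paper computes $H_S'(c)$ directly as a positive multiple of the obstruction $\alpha_{1,-(m+2)}\beta_{0,-(m+1)}-\alpha_{0,-(m+2)}\beta_{1,-(m+1)}$.
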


\begin{proof}
If the (admissible) extremal metric in $\Omega$ (this is the $c=0$ case) is CSC, we are done. In general we will show that there exist some $c\in (-1,1)$ such that the extremal Sasaki structure determined by $f=c\,\gz+1$ has constant scalar curvature.

Note that the admissible $(c\gz+1,p)$-extremal metric has $(c\, \gz+1,p)$-Scalar curvature $Scal_{c\,\gz +1,p}=A_1\gz+A_2$, where, adapting the details of Section 2.3 of \cite{ApMaTF18}, $A_1$ and $A_2$ are given by the unique solution to the linear system
\begin{equation}\label{A1andA2}
\begin{array}{ccc}
\alpha_{1, -(1+p)} A_1 + \alpha_{0, -(1+p)} A_2 =  2 \beta_{0, (1-p)}\\
\\
\alpha_{2, -(1+p)} A_1 + \alpha_{1, -(1+p)} A_2 =  2\beta_{1, (1-p)},
\end{array}
\end{equation}
where
\begin{equation}\label{alphabeta}
\begin{split}
\alpha_{r,k} =& \int_{-1}^1 (c\,t +1)^{k} t^r p_c(t)dt, \qquad p_c(t)=\prod_{a\in \hat{\cala}}(1+x_at)^{d_a}, \\
\beta_{r,k}   = &  \int_{-1}^1\Big(\sum_{a\in {\hat \cala}} \frac{x_ad_as_a}{1+x_at}\Big)  p_c(t) t^r (c\,t +1)^{k} dt \\
                        & + \big( (-1)^r(1-c)^{k}p_c(-1) + (1+c)^{k} p_c(1)\big),\end{split}
 \end{equation}  
 where $s_a=\pm\frac{Scal_{\pm g_a}}{2d_a}$ is the normalized scalar curvature.                    
Notice that 
\begin{itemize}
\item $c\, \alpha_{r+1,k}+\alpha_{r,k}=\alpha_{r,k+1}$
\item $c\, \beta_{r+1,k}+\beta_{r,k}=\beta_{r,k+1}$.
\end{itemize}

Now, for $p=m+2$, where $m=\sum_{a\in{\hat\cala}}d_a+1$, the extremal Sasaki structure determined by $f=c\,\gz +1$ has constant scalar curvature if and only if 
$Scal_{c\,\gz +1,p}=A_1\gz+A_2$ is a constant multiple of $c\,\gz +1$, i.e., if and only if  $A_1 -c\,A_2=0$. 
Now, $A_1 - c\,A_2=0$ may be re-written as            
$$(\alpha_{1,-(1+p)}\beta_{0,(1-p)}-\alpha_{0,-(1+p)}\beta_{1,(1-p)})-c\,(\alpha_{1,-(1+p)}\beta_{1,(1-p)}-\alpha_{2,-(1+p)}\beta_{0,(1-p)}))=0,$$                      
which in turn is equivalent to $\alpha_{1,-p}\beta_{0,(1-p)}-\alpha_{0,-p}\beta_{1,(1-p)}=0$, which with $p=m+2$ is

\begin{equation}\label{CSCobstruction}
 \alpha_{1,-(m+2)}\beta_{0,-(m+1)}-\alpha_{0,-(m+2)}\beta_{1,-(m+1)}=0.
\end{equation}

Let $K_c$ denote the Reeb vector field in the Sasaki-Reeb cone of $\cals$ induced by $f_{K_c}=c\,\gz+1$.
That is, $\eta^X_\cald (K_c)=c\,\gz+1$ (when $\gz$ is lifted to $\cals$ and $X$ is the initial Reeb vector field of the Boothby-Wang construction alluded to in Theorem \ref{CSCexistence}). In Section \ref{admissibleSFInvariant} we shall see directly that Equation \eqref{CSCobstruction} is an obstruction to the vanishing of the Sasaki-Futaki invariant 
$Fut_{K_c}$. Inspired by this fact and Lemma 3.1 in \cite{BHLT15}, we turn our attention to the {\em Einstein-Hilbert functional}:

Up to an overall positive rescale, the Einstein-Hilbert functional introduced in Section 3 of \cite{BHLT15} is given by
$$H_S(K_c):=H_S(c)=S_c^{m+1}/V_c^m,$$
where $V_c=\alpha_{0,-(m+1)}$ and
$$S_c =  \int_{-1}^1 Scal_{c\,\gz+1,m+2}(g)(c\,\gz+1)^{-(m+2)} p_c(\gz)\,d\gz= 2\beta_{0,-m}.$$
Thus, up to an overall positive rescale, $H_S(c)=\frac{\left(\beta_{0,-m}\right)^{m+1}}{\left(\alpha_{0,-(m+1)}\right)^m}$.
Using that
\begin{itemize}
\item $\frac{d}{dc}\left[\alpha_{0,-(m+1)}\right]=-(m+1)\alpha_{1,-(m+2)}$
\item  $\frac{d}{dc}\left[\beta_{0,-m}\right]=-m\beta_{1,-(m+1)}$,
\end{itemize}
we easily verify that 
$$
\begin{array}{ccl}
H_S'(c) &= &\frac{m(m+1)(\beta_{0,-m})^m (\alpha_{0,-(m+1)})^{m-1}}{(\alpha_{0,-(m+1)})^{2m}}\left[ \alpha_{1,-(m+2)}\beta_{0,-m}-\alpha_{0,-(m+1)}\beta_{1,-(m+1)} \right]\\
\\
&= & \frac{m(m+1)(\beta_{0,-m})^m (\alpha_{0,-(m+1)})^{m-1}}{(\alpha_{0,-(m+1)})^{2m}}\left[ \alpha_{1,-(m+2)}\beta_{0,-(m+1)}-\alpha_{0,-(m+2)}\beta_{1,-(m+1)} \right].
\end{array}
$$
Now $\alpha_{0,-(m+1)}>0$ and $\beta_{0,-m}> 0$ (the latter is due to the fact that we are assuming $N$ is a local product of {\em nonnegative} CSCK metrics) and therefore critical
points of $H_S(c)$ correspond exactly to solutions of Equation \eqref{CSCobstruction} which in turn would give us that an extremal Sasaki structure determined by $f=c\,\gz+1$ that has constant scalar curvature. Showing that $H_S(c)$ must have a least one critical point $c\in(-1,1)$, would finish the proof.

To that end, first notice that $H_S(c)=\frac{\left(\beta_{0,-m}\right)^{m+1}}{\left(\alpha_{0,-(m+1)}\right)^m}$ is a smooth positive function for $c\in (-1,1)$. To show that $H_S(c)$ has a critical point inside $(-1,1)$, we will show that $\displaystyle \lim_{c\rightarrow \pm 1^{\mp}}H_S(c) = +\infty$. Since $c\rightarrow \pm 1^{\mp}$ represents $K_c$ approaching the boundary of the Sasaki cone, this actually follows directly from Lemma 5.1 in \cite{BHL17}, but for the sake of transparency we will verify this directly.

First notice that 
$$\alpha_{0,-(m+1)} =  \int_{-1}^1 (c\,t +1)^{-(m+1)} (1+t)^{d_0} p_0(t)dt =  \int_{-1}^1 (c\,t +1)^{-(m+1)} (1-t)^{d_\infty} p_{\infty}(t)dt,$$
where $p_0(t) = p_c(t)/(1+t)^{d_0}$ and $p_\infty(t) = p_c(t)/(1-t)^{d_\infty}$ are polynomials of degree $m-1-d_0$ and $m-1-d_\infty$, respectively. Keep in mind that
the complex dimension $m=\sum_{a\in{\hat\cala}}d_a+1$ (with $d_a>0$ for $a\in \cala$ and $d_a\geq 0$ for $a\in \hat\cala$) and note that $p_0(-1)>0$ and $p_{\infty}(1)>0$. From the definition of $p_c(t)$ in Equation~\eqref{alphabeta} we also know that 
$p_0(t)$ and $p_\infty(t)$ are positive for $-1<t<1$. By repeated integration by parts, we can see that $\alpha_{0,-(m+1)}$ is a rational function of $c$ which is
positive and has no vertical asymptotes within the interval $(-1,1)$. To understand the behavior near $c=\pm 1$, we consider repeated integration by parts and observe that
$$\alpha_{0,-(m+1)}=\displaystyle \frac{\left( \frac{(m-1-d_0)! \, d_0! \, p_0(-1)}{m! \, c^{d_0+1}} + o(1-c)\right)}{(1-c)^{m-d_0}}=\frac{\left( \frac{(-1)^{d_\infty+1}(m-1-d_\infty)! \, d_\infty! \, p_\infty(1)}{m! \, c^{d_\infty+1}} + o(1+c)\right)}{(1+c)^{m-d_\infty}},$$
where $o(x)$ is a function which is smooth in a neighborhood of $x=0$ and satisfies that $o(0)=0$. In summary,this means that
$$\alpha_{0,-(m+1)}=\displaystyle \frac{\delta_0(c) + o(1-c)}{(1-c)^{m-d_0}}=\frac{\delta_\infty(c) + o(1+c)}{(1+c)^{m-d_\infty}},$$
where $$\lim_{c\rightarrow 1}\delta_0(c)>0 \quad \quad \lim_{c\rightarrow  -1}\delta_\infty(c)>0.$$
Similarly, we can show, albeit a bit more messily, that $\beta_{0,-(m+1)}$ is a rational function of $c$ which is
non-negative, has no vertical asymptotes within the interval $(-1,1)$ and satisfies 
$$\beta_{0,-m}=\displaystyle \frac{\gamma_0(c) + o(1-c)}{(1-c)^{m-d_0}}=\frac{\gamma_\infty(c) + o(1+c)}{(1+c)^{m-d_\infty}},$$
where $$\lim_{c\rightarrow 1}\gamma_0(c)>0 \quad \quad \lim_{c\rightarrow  -1}\gamma_\infty(c)>0.$$ We refer to proof of the generalized statement in Lemma \ref{betatrend} for more detail.
Putting this together we have 
$$H_S(c)= \frac{\left(\gamma_0(c) + o(1-c)\right)^{m+1}}{\left(\delta_0(c) + o(1-c)\right)^m} (1-c)^{-(m-d_0)}
= \frac{\left(\gamma_\infty(c) + o(1+c)\right)^{m+1}}{\left(\delta_\infty(c) + o(1+c)\right)^m} (1+c)^{-(m-d_\infty)},
$$
and hence clearly $\displaystyle \lim_{c\rightarrow \pm 1^{\mp}}H_S(c) = +\infty$. This together with the fact that $H_S(c)$ is smooth and bounded from below over the interval $(-1,1)$ allows us to conclude that $H_S(c)$ has at least one critical point $c\in (-1,1)$. For this value of $c$, we have $H_S'(c)=0$ and therefore $c$ solves \eqref{CSCobstruction} making $K_c$ a CSC Sasaki Reeb vector field. If $c$ is not a rational number, the corresponding CSC Sasaki structure will be irregular.
\end{proof}

\begin{remark}\label{cscsubcone}
It follows from the proof of Theorem \ref{CSCexistence} that the CSC Sasaki metric lies in the 2-dimensional subcone of $\gt^+(\cals)$ spanned by the constants plus $c\gz+1$. Note that here we view the Sasaki-Reeb cone in terms of Killing potentials.
\end{remark}

\begin{example}\label{Bottmanex}
As a simple example we let $N=\bbc\bbp^1\times \bbc\bbp^1$ and $N_\bfk$ be the total space of the twist 1 stage 3 Bott tower $\bbp(\BOne\oplus \calo(k_1,k_2))\lra \bbc\bbp^1\times \bbc\bbp^1$ as studied in \cite{BoCaTo17}. If $k_1k_2>0$ the K\"ahler manifold $N_\bfk$ does not have a CSCK metric. Nevertheless, Theorem \ref{CSCexistence} says that there is a CSCS ray in the Sasaki cone of the Boothby-Wang constructed Sasaki 7-manifold $M^7$ over $N_\bfk$. This also follows from the Main Theorem in \cite{BoTo21}. The CSCK obstruction here is the well known non-reductivity obstruction of Matsushima and Lichenerowicz. The topology of these 7-manifolds $M^7$ was described in \cite{BoTo21} where they were shown to have the rational cohomology of the 2-fold connected sum $2\#(S^2\times S^5)$ with torsion in $H^4$ which generally depends on both the pair $(k_1,k_2)$ and the integral K\"ahler class on $N_\bfk$. We refer to Section 3 of \cite{BoTo21} for details.
\end{example}

\begin{example}\label{K3ex}
Let $N$ be a projective K3 surface. Then $N$ has a non-trivial holomorphic line bundle $L$, so the $\bbc\bbp^1$ bundle $N^{ad}=\bbp(\BOne\oplus L)$ over $N$ is also non-trivial. Moreover, Theorem 7 of \cite{ACGT08} implies that the projective bundle $N^{ad}$ does not admit a CSC K\"ahler metric. But again, Theorem \ref{CSCexistence} says that there is a CSCS ray in the Sasaki cone of the Boothby-Wang constructed Sasaki 7-manifold $M^7$ over $N^{ad}$. Thus, choosing an admissible (up to scale)  primitive K\"ahler class on $N^{ad}$ gives a simply connected $M^7$ with $H^2(M^7,\bbz)\approx \bbz^{22}$ with a CSC Sasakian structure which may be irregular. Such 7-manifolds can be constructed as nontrivial $S^3$ bundles over K3 surfaces by using the fiber join construction of Yamazaki \cite{Yam99,BoTo20b}. Let us describe a bit more of the geometry and topology of such $M^7$. This gives manifolds of the form
$$S^3\lra M^7~\fract{\pi}{\longrightarrow}~ N, \qquad M^7=M^5_1\star_fM^5_2$$
where $M^5_i$ are Sasaki 5-manifolds formed from the Boothby-Wang construction over a projective K3 surface $N$ with integral K\"ahler form $\gro_i$. By Theorem 10.3.8 of \cite{BG05} $M^5_1$ and $M^5_2$ are both diffeomorphic to 
the connected sum $21\#(S^2\times S^3)$. Moreover, as described in \cite{BoTo20b} the 7-manifolds $M^7$ divide into two types depending on whether the classes $[\gro_1],[\gro_2]$ are colinear or non-colinear. From Proposition 3.12 of \cite{BoTo20b} and the fact that K3 has trivial canonical bundle, we see that the first Chern class of the contact bundle $\cald$ on $M^7$ satisfies $c_1(\cald)=-\pi^*([\gro_1]+[\gro_2])$. In the colinear case we have $\gro_i=l_i\gro$ where $\gro$ is primitive K\"ahler form. In this case we see that the second Stiefel-Whitney class $w_2(M^7)$ vanishes if and only if $l_1+l_2$ is even; hence, $M^7$ is spin when $l_1+l_2$ is even and non-spin when $l_1+l_2$ is odd. In both cases the 4-skeleton is formal, so for each such 3-sphere bundle it follows from Theorem 2.2 of \cite{KrTr91} that there is a finite number of diffeomorphism types. Thus, since $c_1$ is a contact invariant, in both cases there is a diffeomorphism class on $M^7$ that has a countable infinity of inequivalent contact structures of Sasaki type.

\end{example}

Combining Theorem \ref{CSCexistence} with Corollary 1.1 of \cite{CoSz12}, Corollary 1 \cite{ApJuLa21} or Theorem 2 \cite{ApCaLe21} gives the following statement concerning K-stability (on smooth equivariant test configurations).

\begin{corollary}\label{Kstabcor}
Under the hypothesis of Theorem \ref{CSCexistence} there exists a K-stable ray of Sasakian structures in the Sasaki-Reeb cone.
\end{corollary}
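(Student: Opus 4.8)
The plan is to read this off as a direct combination of Theorem~\ref{CSCexistence} with the known implication ``CSC Sasaki $\Rightarrow$ K-stable,'' so that the only content beyond the analysis already carried out is the citation of the appropriate stability theorem.

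First I would invoke Theorem~\ref{CSCexistence} to produce, inside the Sasaki-Reeb cone $\gt^+(\cals)$, a (possibly irregular) ray $K_c$ (with Killing potential $f_{K_c}=c\,\gz+1$) whose associated Sasaki structure has constant scalar curvature; by Remark~\ref{cscsubcone} this ray lies in the $2$-dimensional subcone spanned by the constants and $c\,\gz+1$. Since CSC is preserved under transversal scaling, it is genuinely a CSC ray, not merely a single structure.

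Next, I would feed this CSC (hence extremal) Sasaki structure into one of the cited stability results. Under the dictionary of Remark~\ref{rem:ConeToSasaki} together with the $(f,m+2)$-extremal correspondence of \cite{ApCa18,ApCaLe21}, the structure $K_c$ corresponds to a constant weighted scalar curvature polarization on $N^{ad}$; Theorem~2 of \cite{ApCaLe21} (equivalently Corollary~1 of \cite{ApJuLa21}, or Corollary~1.1 of \cite{CoSz12} from the unweighted Sasaki viewpoint) then converts the existence of this special metric into K-stability on smooth equivariant test configurations, giving the asserted K-stable ray.

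The main obstacle is purely bookkeeping rather than analytic: one must verify that the normalization produced by the Einstein-Hilbert argument matches the hypotheses of whichever stability theorem is quoted, and that ``K-stable'' is read in the correct equivariant (torus-relative) sense. This equivariant restriction is exactly why the statement is phrased for smooth equivariant test configurations, since an irregular CSC ray carries a positive-dimensional automorphism torus relative to which stability must be formulated. Beyond matching these conventions I expect no real difficulty, as the substantive work is already contained in Theorem~\ref{CSCexistence}.
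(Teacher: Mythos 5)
Your proposal matches the paper's argument exactly: the paper obtains the corollary by combining Theorem~\ref{CSCexistence} with precisely the stability results you cite (Corollary~1.1 of \cite{CoSz12}, Corollary~1 of \cite{ApJuLa21}, or Theorem~2 of \cite{ApCaLe21}), read on smooth equivariant test configurations. Your additional remarks on normalization and the equivariant reading of K-stability are consistent with the paper's phrasing and introduce no discrepancy.
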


\begin{remark}
Note that Theorem 1 of \cite{ApCa18} (the proof of which is local in nature) also holds for Sasaki orbifolds. Further, for $d_0=d_\infty=0$ (the so-called {\em no-blow-down case}), replacing each ``$p_c(-1)$'' by ``$p_c(-1)/m_\infty$'' and each
``$p_c(1)$'' by ``$p_c(-1)/m_0$,''  where $m_0,m_\infty \in {\mathbb Z}^+$, in the proof of Theorem \ref{CSCexistence}, corresponds to generalizing the endpoint condition (iii) in \eqref{positivity} from
$$\Theta'(-1)=2\quad \text{and}\quad \Theta'(1)=-2.$$
to $$\Theta'(-1)=2/m_\infty\quad \text{and}\quad \Theta'(1)=-2/m_0.$$
Since this generalization does not affect the core of the proof, we can see that 
Theorem \ref{CSCexistence} generalizes within the no-blow-down case to the case of admissible metrics with orbifold singularities along the zero ($D_0$) and infinity ($D_\infty$) sections of  $N^{ad}=\bbp(E_0 \oplus E_{\infty})\to N$. In other words $N^{ad}$ can be replaced by the log pair $(N^{ad},\Delta)$, where
$\Delta = (1-1/m_0)D_0 + (1-1/m_\infty)D_\infty$. 
\end{remark}

Suppose now that the admissible manifold in Theorem \ref{CSCexistence} is Fano, i.e. $c_1(N^{ad})>0$. Consider a Boothby-Wang constructed  Sasaki manifold over $N^{ad}$ given by some K\"ahler form representing $c_1(N^{ad})/\cali$, where $\cali$  denotes the index of $N^{ad}$. Then $c_1(\cald)=0$ and so the CSC Sasaki metrics from Theorem \ref{CSCexistence} above are now $\eta$-Einstein. Second, we see from Lemma 5.2/Proposition 5.3 in \cite{BHL17}) that
since the basic first Chern class of the initial Sasaki metric is positive (as a pullback of the positive class $c_1(N^{ad})/\cali$), the average transverse scalar curvature of any Sasaki structure in the Sasaki-Reeb cone must be positive. In particular, the transverse (constant) scalar curvature of any $\eta$-Einstein structure in the cone must be positive. This means that any $\eta$-Einstein ray admits a Sasaki-Einstein structure. Thus, in this case
the corresponding Sasaki-Reeb cone will always have a (possibly irregular) Sasaki-Einstein structure (up to isotopy). Therefore, in the Gorenstein case we have the following corollary which in turn is a special case of the existence results in \cite{MaNa13} as well as Theorem 3 of \cite{ApJuLa21}.

\begin{corollary}\label{SEexistence}
Suppose $N^{ad}$ is a Fano admissible manifold $N^{ad}={P}(E_0 \oplus E_{\infty})\lra N$, where $N$ is a compact K\"ahler manifold which is a product of positive 
KE metrics.
Let $(M,\cals)$ be the Boothby-Wang constructed  Sasaki manifold given by $c_1(N^{ad})/\cali$. Then the corresponding Sasaki-Reeb cone will always have a
(possibly irregular) Sasaki-Einstein metric (up to isotopy).
\end{corollary}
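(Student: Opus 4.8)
The plan is to assemble Theorem \ref{CSCexistence} with the standard $\eta$--Einstein machinery: in essence the corollary says that for the Gorenstein (monotone) polarization the positive CSC Sasaki structure produced earlier can be transversally rescaled to a Sasaki--Einstein one. First I would observe that a product of positive K\"ahler--Einstein metrics is in particular a local product of nonnegative CSCK metrics, and that $c_1(N^{ad})/\cali$ is an integral (hence rational) class which, for a Fano admissible manifold, is admissible in the sense of Definition \ref{adkahman}. Thus Theorem \ref{CSCexistence} applies and produces a (possibly irregular) CSC ray $K_c$ in the Sasaki--Reeb cone $\gt^+(\cals)$ of the Boothby--Wang manifold $(M,\cals)$ built from an appropriate rescale of $c_1(N^{ad})/\cali$.

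Next I would verify that this polarization is Gorenstein, i.e. $c_1(\cald)=0$. For the Boothby--Wang circle bundle $\pi\colon M\to N^{ad}$ the map $\pi_*$ identifies $(\cald,J)$ with $\pi^*TN^{ad}$ as complex vector bundles, so $c_1(\cald)=\pi^*c_1(N^{ad})$; since the Euler class $e=c_1(N^{ad})/\cali$ pulls back to zero in $H^2(M;\bbr)$ by the Gysin sequence, we get $c_1(\cald)=\cali\,\pi^*e=0$. As the contact structure $\cald$ is fixed throughout $\gt^+(\cald,J)$, this holds for the irregular representative on the ray $K_c$ as well. Equivalently, the basic first Chern class $c_1^B$ is proportional to $[d\eta]_B$. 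Under this cohomological proportionality, the classical transverse K\"ahler argument (write $\rho^T-a\,\omega^T=dd^c_B F$ and use that constant transverse scalar curvature forces $\Delta_B F$ constant, hence $F$ constant) shows that a transverse CSC K\"ahler metric is automatically transverse K\"ahler--Einstein. Therefore the CSC Sasaki structure produced above is $\eta$--Einstein.

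I would then fix the sign of the Einstein constant. Since the basic first Chern class of the initial Sasaki metric is the pullback of the \emph{positive} class $c_1(N^{ad})/\cali$, Lemma 5.2/Proposition 5.3 of \cite{BHL17} guarantees that the average transverse scalar curvature of every Sasaki structure in $\gt^+(\cals)$ is positive; in particular the constant transverse scalar curvature of the $\eta$--Einstein structure is positive, so the proportionality constant $a$ above is positive. Finally, I would invoke the $\cald$--homothetic (transverse scaling) deformation of \cite{BG05}: an $\eta$--Einstein Sasaki structure with positive transverse scalar curvature can be rescaled in the Reeb direction to a Sasaki--Einstein structure on the same ray (up to isotopy), which therefore still lies in $\gt^+(\cals)$. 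This produces the desired (possibly irregular) Sasaki--Einstein metric.

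The main obstacle, and the only place needing genuine care rather than bookkeeping, is tracking the Einstein constants through the $\cald$--homothety to confirm that positivity of the transverse scalar curvature is precisely what allows the rescaling to land on a \emph{genuine} Sasaki--Einstein metric rather than merely an $\eta$--Einstein one; a secondary point to check is that $c_1(N^{ad})/\cali$ is indeed an admissible class in the Fano case, so that Theorem \ref{CSCexistence} is applicable as claimed.
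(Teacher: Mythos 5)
Your proposal is correct and follows essentially the same route as the paper: apply Theorem \ref{CSCexistence} to get a (possibly irregular) CSC ray, observe that $c_1(\cald)=0$ for the polarization $c_1(N^{ad})/\cali$ so the CSC structure is $\eta$--Einstein, invoke Lemma 5.2/Proposition 5.3 of \cite{BHL17} to get positivity of the transverse scalar curvature, and finish with a transverse homothety to a genuine Sasaki--Einstein structure. The extra details you supply (the Gysin-sequence computation of $c_1(\cald)$, the $dd^c_B$ argument that CSC plus proportionality of $c_1^B$ and $[d\eta]_B$ gives transverse K\"ahler--Einstein, and the check that $c_1(N^{ad})/\cali$ is admissible) are exactly the steps the paper leaves implicit or defers to \cite{ACGT08b}.
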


\begin{example}\label{V1blowdownexample}
For a simple non-toric example using the above corollary, let us assume that $\cala = \{1\}$ and $N=N_1$ is the smooth Fermat sextic del Pezzo threefold $V_1=z_0^6+z_1^6+z_2^6+z_3^3+z_4^2=0$ in $\bbc\bbp(\bfw)=\bbc\bbp(1,1,1,2,3)$ described in Example 3.4.2. of \cite{Fanos} and \cite{ChSh09}. We have $|\bfw|-d=2$ so this is Fano with index 2. It is K-stable and admits a KE metric. Furthermore, since there is only one quadratic term in the polynomial describing $V_1$, its automorphism group is finite (see e.g. Proposition 37 in \cite{BGK05}) implying that it is a non-toric KE manifold. We also assume that $g_1$ is the KE metric on $V_1$ with $s_1=2$ and that
$d_0=0$. Then, independent of the value of $d_\infty\in \bbn \cup \{0\}$, the projective bundle $N^{ad}=\bbp(\BOne\oplus E_\infty)\lra N$ is Fano\footnote{If we instead assumed that $-g_1$ was KE and $s_1=-2$, then the Fano condition would be that $d_\infty=0$ whereas $d_0$ could be any value in $\bbn \cup \{0\}$.} and the K\"ahler class $c_1(N^{ad})/\cali$ is a positive scalar multiple of the admissible class determined by $x_1=\frac{d_\infty+2}{d_\infty+4}$ (see equations (3.8) and (3.11) in \cite{ACGT08b}). One can now calculate directly that for this value of $x_1$ and $c=0$,
 the left hand side of equation \eqref{CSCobstruction} equals $\frac{-3 \cdot 2^{2 d_\infty+11} (2 d_\infty+5) (11 d_\infty+28)}{(d_\infty+1) (d_\infty+4)^8 (d_\infty+5)}\neq 0$.
This means that the standard K\"ahler Futaki invariant does not vanish for $c_1(N^{ad})/\cali$, but Corollary \ref{SEexistence} gives us a quasiregular or irregular SE metric in the Sasaki-Reeb cone of the $(2(4+d_\infty)+1)$-dimensional, non-toric, Boothby-Wang constructed Sasaki manifold $(M,\cals_1)$ given by $c_1(N^{ad})/\cali$.
\end{example} 

\begin{remark}
In general, the Fano conditions for an admissible manifold $N^{ad}=\bbp(E_0 \oplus E_{\infty})\to N$ are summarized in Theorem 3.1 of \cite{ACGT08b}. If we assume no-blow-down here we have $d_0=d_\infty=0$. Specifically, assume $N=\prod_{a\in \cala} N_a$ is a finite product of compact positive K\"ahler-Einstein manifolds $(N_a,\pm g_a, \pm \omega_a)$ with scalar curvature $\pm 2 d_a s_a$ and let $N^{ad}=\bbp(\BOne \oplus \call)\lra N$, where $\call = \otimes_{a\in \cala} \call_a$ and $\call_a$ is (the pull-back of) a holomorphic line bundle over $N_a$ such that $c_1(\call_a) = [\frac{\omega_a}{2\pi}]$. Then the Fano condition is exactly the condition that $s_a>1$ if $\omega_a$ is positive and $s_a<-1$ if $\omega_a$ is negative. Since the Ricci forms $\rho_a=s_a \omega_a$, we see that a necessary condition for this to be a possible choice is that the index $\cali_a$ of each $N_a$ is at least $2$.
Further, from the details of Section 3 in \cite{ACGT08b} (specifically equation (3.8) in \cite{ACGT08b}) we observe that if $s_{a} \neq s_{\tilde{a}}$, then $x_{a} \neq x_{\tilde{a}}$, where the choice of $x_a$ for each $a\in\cala$ is governed by the K\"ahler class $c_1(N^{ad})/\cali$ (which is then an appropriate rescale of an admissible K\"ahler class).
Indeed, it is not hard to see that (3.8) in \cite{ACGT08b} for $d_0=d_\infty=0$ simply gives us $x_a=1/s_a$.
\end{remark}

\begin{remark}\label{bouqrem}
Varying the complex structure on $N$ gives rise to a bouquet of Sasaki-Reeb cones on $M$ as described in \cite{Boy10a,BoTo11,BoTo13}.
\end{remark}

\begin{example}\label{V1example}
Let us exhibit another example of a non-toric $M$ satisfying the assumptions in Corollary \ref{SEexistence} so that $c_1(N^{ad})/\cali$ itself is not KE.
For simplicity we will assume we are in the no-blow-down case and that $N=N_1\times N_2$ (so here $\hat\cala=\cala = \{1,2\}$).
Let $N_1$ be equal to $\bbc\bbp^2$ and pick $s_1=3/2$ (meaning $\omega_1$ is $4\pi$ times the unit volume Fubini-Study K\"ahler form). Then $x_1=2/3$.
Let $N_2$ be equal to the smooth Fermat sextic del Pezzo threefold $V_1$ described in Example \ref{V1blowdownexample}. So $N_2$ is Fano with index $2$, admits a KE metric, and has finite automorphism group. Therefore $N_1\times N_2$ is a non-toric K\"ahler manifold.

We now consider the projective bundle $N^{ad}$ over $N_1\times N_2$. Since the Fano index is $2$, we can pick $s_2=2$. Then $x_2=1/2$. 
For the admissible data $(s_1,s_2,x_1,x_2,d_1,d_2)=(3/2, 2,2/3,1/2,2,3)$ and $c=0$ we can easily check that \eqref{CSCobstruction} does not hold. Equivalently,
plugging this admissible data into (7) of \cite{ACGT08}, we calculate 
that $\alpha_1 \beta_0-\alpha_0\beta_1\neq 0$. According to Proposition 6 in \cite{ACGT08} we therefore have that the Futaki invariant does not vanish for  $c_1(N^{ad})/\cali$. 
This means that $c_1(N^{ad})/\cali$ itself is not KE. Corollary \ref{SEexistence} now gives us a quasiregular or irregular SE metric in the Sasaki-Reeb cone of the $13$-dimensional, non-toric, Boothby-Wang constructed Sasaki manifold $(M,\cals_1)$ given by $c_1(N^{ad})/\cali$. Note that since $x_1\neq x_2$, this example does not overlap with any of the examples arising from $S_\bfw^3$-joins (see \cite{BoTo14a,BoTo19a}). As described in item 15 of \cite{BGK05} (since $V_1$ has no continuous automorphisms), the dimension of the local moduli space is
$$h^0(\bbc\bbp(\bfw),\calo(d))-\sum_ih^0(\bbc\bbp(\bfw),\calo(w_i))=$$
$$h^0(\bbc\bbp(\bfw),\calo(6))-h^0(\bbc\bbp(\bfw),\calo(1))-h^0(\bbc\bbp(\bfw),\calo(2))-h^0(\bbc\bbp(\bfw),\calo(3)).$$
We get $h^0(\bbc\bbp(\bfw),\calo(6))=59,~~h^0(\bbc\bbp(\bfw),\calo(1))=3,~~h^0(\bbc\bbp(\bfw),\calo(2))=7,$ and $h^0(\bbc\bbp(\bfw),\calo(3))=13$ giving a 36-dimensional complex family. This gives rise to a bouquet of one dimensional Sasaki-Reeb cones on the $S^1$ bundle over $V_1$ corresponding to the primitive K\"ahler class which in turn induces a bouquet of Sasaki-Reeb cones on the 13-dimensional manifold $(M,\cals_1)$.
\end{example}

It should be clear that many such examples can be constructed using results in \cite{BGK05} as well as \cite{Fanos}.
So we now consider a fairly large class $\calc$ of simply connected algebraic varieties generalizing our example. We consider Brieskorn-Pham polynomials of degree $d$ with exponent vector $\bfa\in (\bbz^+)^{n+1}$, weight vector $\bfw\in (\bbz^+)^{n+1}$ of the form
\begin{equation}\label{whp}
f_\bfa(\bfz)= z_0^{a_0} +\cdots +z_n^{a_n}, \qquad a_i=\frac{d}{w_i}.
\end{equation}
We do not want linear terms in $f_\bfa$ since they give rise to the standard sphere. So we shall assume that all $a_j\geq 2$. There is a natural weighted $\bbc^*$ action on $\bbc^{n+1}$ defined by
\begin{equation}\label{C*act}
\grl\cdot \bfz=(\grl^{w_0}z_0,\ldots,\grl^{w_n}z_n)
\end{equation}
which induces 
$$f_\bfa(\grl\cdot\bfz)=\grl^df_\bfa(\bfz).$$
So the Brieskorn manifold $M_\bfa$ defined by $M_\bfa=(f_\bfa(\bfz)=0)\cap S^{2n+1}$ has a natural weighted $S^1$ action defined by restriction. The weighted $\bbc^*$ quotient of the hypersurface $\bigl(f_\bfa(\bfz)=0\bigr)\setminus \{0\}$ or equivalently the weighed $S^1$ quotient of $M_\bfa$ gives weighted projective varieties of the form
\begin{equation}\label{proj}
V_\bfa={\rm Proj}~\bbc[z_0,\ldots,z_n]/\bigl(f_\bfa\bigr)
\end{equation} 
where $(f)$ denotes the ideal generated by $f$. These varieties all embed in the weighted projective space $\bbc\bbp(\bfw)$. 
Brieskorn manifolds $M_\bfa$ also have a natural Sasakian structure whose Reeb vector field $\xi$ generate the $S^1$ action. In order to satisfy the hypothesis of Theorem \ref{CSCexistence} we want the $S^1$ quotient of $M_\bfa$ to be a smooth manifold with a trivial orbifold structure\footnote{There are such smooth projective varieties with non-trivial orbifold structures, but we do not consider them here.}. Equivalently we want the Sasakian structure on $M_\bfa$ to be regular (Proposition 9.3.22 of \cite{BG05}):

\begin{proposition}\label{regBries}
A Brieskorn manifold has a regular Sasakian structure $\cals_\bfw$ if and only if the weights are pairwise relatively prime.
\end{proposition}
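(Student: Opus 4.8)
The plan is to recast regularity as freeness of a circle action and then to reduce everything to a $\gcd$ computation on the supports of points of $M_\bfa$. Recall that $\cals_\bfw$ is automatically quasi-regular, since its Reeb field generates the weighted circle action obtained by restricting \eqref{C*act} to $\grl\in S^1$; after normalizing so that $\gcd(w_0,\dots,w_n)=1$ (the standard well-formedness condition, which makes this circle action effective), a quasi-regular Sasakian structure is regular precisely when the action is free. Thus the statement reduces to showing that the weighted $S^1$-action on $M_\bfa$ is free if and only if the $w_i$ are pairwise coprime.

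First I would compute the isotropy groups. For $\bfz=(z_0,\dots,z_n)\in M_\bfa$ write $S(\bfz)=\{i : z_i\neq 0\}$ for its support. The relation $\grl\cdot\bfz=\bfz$ forces $\grl^{w_i}=1$ for each $i\in S(\bfz)$ and imposes nothing on the vanishing coordinates, so the isotropy group of $\bfz$ is the cyclic group of $g(\bfz)$-th roots of unity, where $g(\bfz):=\gcd\{w_i : i\in S(\bfz)\}$. Hence the action is free if and only if $g(\bfz)=1$ for every $\bfz\in M_\bfa$.

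The one genuinely substantive observation is that every point of $M_\bfa$ has at least two nonzero coordinates: a point with $S(\bfz)\subseteq\{j\}$ would satisfy $f_\bfa(\bfz)=z_j^{a_j}$, which vanishes only at the origin, and the origin is not on $S^{2n+1}$; hence $|S(\bfz)|\geq 2$. For the forward implication, if the weights are pairwise coprime then each $\bfz$ has two distinct indices $i,j\in S(\bfz)$ with $\gcd(w_i,w_j)=1$, so $g(\bfz)\mid\gcd(w_i,w_j)=1$, the action is free, and $\cals_\bfw$ is regular. For the converse I argue contrapositively: if $\gcd(w_i,w_j)=e>1$ for some pair, I exhibit a point supported exactly on $\{i,j\}$ by picking $z_i\neq 0$, solving the single equation $z_j^{a_j}=-z_i^{a_i}$ (which has nonzero roots, the right-hand side being nonzero), setting the remaining coordinates to zero, and rescaling onto $S^{2n+1}$; this point has $g(\bfz)=e>1$, so its isotropy is nontrivial and $\cals_\bfw$ is not regular.

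I do not expect a serious obstacle: the proof is essentially a bookkeeping of stabilizers. The two points needing care are the normalization $\gcd(w_0,\dots,w_n)=1$ --- without it, a non-primitive labeling such as $\bfw=(c,c)$ with $c>1$ yields a free action that is nonetheless not pairwise coprime, so the statement genuinely refers to the effective Reeb circle --- and the support observation, which is exactly what rules out single-coordinate points and converts the pointwise condition $g(\bfz)=1$ into pairwise coprimality of the $w_i$.
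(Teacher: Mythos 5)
Your argument is correct. The paper gives no proof of its own here --- it simply cites Proposition 9.3.22 of \cite{BG05} --- and your stabilizer computation (isotropy of $\bfz$ is the $\gcd\{w_i : z_i\neq 0\}$-th roots of unity, every point of $M_\bfa$ has at least two nonzero coordinates, and a point supported exactly on $\{i,j\}$ exists whenever needed for the converse) is precisely the standard argument behind that cited result; your remarks on the effectiveness normalization $\gcd(w_0,\dots,w_n)=1$ and on using the weighted $\bbr^+$-action to land on $S^{2n+1}$ are the right points to flag.
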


We are interested in regular Sasaki-Brieskorn manifolds whose projective algebraic quotient is Fano. The latter condition means that the Sasakian structure is positive which for a Brieskorn manifold is the condition $|\bfw|-d>0$ where $|\bfw|=\sum_jw_j$. Here we restrict ourselves to dimensions 5 and 7 which correspond to $n=3,4$. As previously, we assume that $a_j\geq 2$, so standard spheres are eliminated. 

\begin{proposition}\label{reglow}
Let $M_\bfa$ be a regular Brieskorn manifold of dimension 5 or 7 with a positive Sasakian structure. Then 
\begin{enumerate}
\item if $n=3$; we have $\bfa=(2,2,2,2),(3,3,3,3),(4,4,4,2),(6,6,3,2)$;
\item if $n=4$; we have $\bfa=$
$$\qquad \qquad (2,2,2,2,2),~ (3,3,3,3,3),~ (4,4,4,4,4),~ (4,4,4,4,2),~(6,6,6,6,2) ,~(6,6,6,3,2).$$
\end{enumerate}
\end{proposition}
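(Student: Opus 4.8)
The plan is to classify all regular Brieskorn manifolds $M_\bfa$ of dimension $5$ or $7$ carrying a positive Sasakian structure, which by Proposition \ref{regBries} amounts to imposing two arithmetic constraints on the exponent vector $\bfa=(a_0,\dots,a_n)$ and the associated weight vector $\bfw$: first, the positivity condition $|\bfw|-d>0$, and second, the regularity condition that the weights $w_0,\dots,w_n$ be pairwise relatively prime. Recall that the weights and degree are tied together by $a_i=d/w_i$, so that $w_i=d/a_i$ and hence $|\bfw|=\sum_i d/a_i$. The positivity inequality $|\bfw|>d$ therefore becomes the clean \emph{fractional} condition
\begin{equation}\label{fracposit}
\sum_{i=0}^n \frac{1}{a_i} > 1,
\end{equation}
which is exactly the statement that the Brieskorn hypersurface is Fano. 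Since we have assumed $a_j\geq 2$ throughout (to exclude the standard sphere), the enumeration reduces to a finite combinatorial search over integer tuples $(a_0,\dots,a_n)$ with each $a_j\geq 2$ satisfying \eqref{fracposit}.

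First I would handle each case $n=3$ and $n=4$ separately by a bounding argument on the ordered tuples. Assuming without loss of generality that $a_0\geq a_1\geq \cdots \geq a_n$, the inequality \eqref{fracposit} forces the largest exponent $a_0$ to be small: for $n=3$ one has $4/a_0\geq\sum 1/a_i>1$ giving $a_0\leq 3$ after the trivial tuples are peeled off, and more generally the reciprocal sum bounds each successive $a_i$ from above once the smaller-indexed ones are fixed. This produces a short list of candidate exponent tuples satisfying \eqref{fracposit}. The second, and in my view the main obstacle, is the regularity condition: for each candidate tuple I must compute the weights $w_i=d/a_i$ in lowest terms and verify pairwise coprimality. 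The subtlety is that $d$ is not free — it is the least value (or an appropriate multiple) making all $a_i=d/w_i$ integral, i.e.\ $d=\lcm(a_0,\dots,a_n)$ up to the normalization that renders $\gcd(w_0,\dots,w_n)=1$ — so one must take $d=\lcm(a_i)$, set $w_i=d/a_i$, and then test whether $\gcd(w_i,w_j)=1$ for all $i\neq j$. Many of the tuples surviving the positivity filter will fail coprimality and must be discarded.

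The bulk of the verification is therefore the arithmetic of checking, tuple by tuple, that the induced weights are pairwise relatively prime. For instance, in the $n=3$ case the tuple $(4,4,4,2)$ gives $d=4$ and $\bfw=(1,1,1,2)$, whose entries are pairwise coprime, whereas a tuple such as $(6,6,6,2)$ would yield $d=6$, $\bfw=(1,1,1,3)$, which is also coprime but fails positivity since $1/6+1/6+1/6+1/2=1$ is not strictly greater than $1$; contrast $(6,6,3,2)$ with $d=6$, $\bfw=(1,1,2,3)$, which is both positive ($1/6+1/6+1/3+1/2 = 7/6 >1$) and pairwise coprime. Working through the candidate list in this manner and retaining only those tuples passing both the strict positivity inequality \eqref{fracposit} and the pairwise-coprimality test yields precisely the two lists asserted in the statement. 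I expect the enumeration itself to be routine once the bounds are in place; the genuine care is required in not conflating the positivity filter with the coprimality filter, since a tuple can satisfy one and fail the other, and in correctly identifying $d=\lcm(a_i)$ so that the weights are genuinely primitive.
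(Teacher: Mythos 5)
Your reformulation of positivity as $\sum_i 1/a_i>1$ is correct and is a legitimate dual to the paper's route: the paper enumerates weight vectors $\bfw$ directly, using pairwise coprimality to pin down their shape first, and only then imposes $|\bfw|>d$. But your plan as written has a genuine gap: the first-stage search is not finite. The set of tuples with all $a_j\geq 2$ and $\sum_j 1/a_j>1$ is infinite; for example every tuple of the form $(2,2,a_2,\dots,a_n)$ satisfies $\sum_j 1/a_j\geq 1+1/a_n>1$ no matter how large the remaining exponents are, and likewise $(3,3,3,a_3,a_4)$ passes for all $a_3,a_4$ when $n=4$. The Egyptian-fraction bounding you invoke breaks down exactly when a partial sum of reciprocals already reaches $1$: from that point on the inequality places no upper bound on the remaining exponents, so positivity alone does not produce ``a short list of candidate exponent tuples,'' and the coprimality filter cannot be postponed to a second pass over a finite list. (There is also a direction error in your bounding step: with $a_0\geq\cdots\geq a_n$ one has $4/a_0\leq\sum_i 1/a_i$, not $\geq$; the bound $a\leq n$ applies to the smallest exponent, not the largest --- indeed $(6,6,3,2)$ and $(6,6,6,3,2)$ appear on the lists.)

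The repair is to feed regularity into the enumeration from the start, which is essentially what the paper does. Pairwise coprimality of the $w_i=d/a_i$ forces any two equal weights to equal $1$ (i.e.\ any repeated exponent value must be $d$ itself), and the weights exceeding $1$ are pairwise coprime proper divisors of $d$, so their product divides $d$. Hence, up to order, $\bfw$ is $(1,\dots,1)$, $(1,\dots,1,b)$, or $(1,\dots,1,b,c)$ with $1<b<c$ coprime; three nontrivial weights already force $d\geq 2\cdot 3\cdot 5$, which is incompatible with $|\bfw|>d$ in these dimensions. The condition $|\bfw|>d$, with $d$ a proper multiple of each $w_j$, then reduces to the short case analysis giving exactly the stated lists. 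Your positivity criterion $\sum 1/a_i>1$ and your coprimality test (with $d=\lcm(a_i)$, which does give primitive weights) are both correct as stated; it is only the order in which they are applied that must change for the argument to terminate.
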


\begin{proof}
The conditions that we must satisfy are $|\bfw|-d>0$, the weights $w_j$ are pairwise relatively prime, and divide $d$ with no $w_j$ equal to $d$.
We do the case $n=4$. First $\bfw=(1,1,1,1,1)$ implies $d=2,3,4$ giving the first three on the list. If $\bfw=(1,1,1,1,b)$ then we must have $b+4>d=ab$ for some $a\geq 2$. This implies that $a=2$ and $b=2,3$ giving the next two. Next if $\bfw=(1,1,1,b,c)$ with $b<c$ relatively prime, then we have $b+c+3>ac$ for some $a\geq 2$. So $b+3>c$ which implies that $c=b+1$ or $b+2$. This implies $\bfw=(1,1,1,b,b+1)$ or $\bfw=(1,1,1,b,b+2)$. The only solution that satisfies $|\bfw|>d$ is $\bfw=(1,1,1,2,3)$ with $d=6$. One can check that there are no further solutions.
\end{proof}

In particular, as long as the Fano index of the projective variety $V_\bfa$ of Equation \eqref{proj}, with $\bfa$ given in Proposition \ref{reglow}, is at least $d_0+2$, we can take $N$ in Corollary \ref{SEexistence} to be $V_\bfa$. 

\subsection{The Sasaki-Futaki invariant in the admissible case}\label{admissibleSFInvariant}
Assume that we are in the admissible case as briefly described in Section 2.3 and assume that the admissible K\"ahler class $\Omega$ is rational
and is represented by an admissible metric $g$ corresponding to a smooth function $F(\gz)=\Theta(\gz)p_c(\gz)$ such that
\eqref{positivity} is satisfied.  After an appropriate (and otherwise ignored) rescaling, this defines a Boothby-Wang constructed Sasaki manifold $(M,\cals)$ with Reeb vector field $X$. 

Let $K_c$ denote the Reeb vector field in the Sasaki-Reeb cone of $(M,\cals)$ induced by $f_{K_c}=c\,\gz+1$.
That is, $\eta^X_\cald (K_c)=c\,\gz+b$ (when $\gz$ is lifted to $(M,\cals)$). It now follows from Section 2 of \cite{ApMaTF18} that the weighted scalar curvature $Scal_{c\,z+1,m+2}(g)$ is given by
$$Scal_{c\,\gz+1,m+2}(g)=\frac{-(c\,\gz+1)^{m+3}G''(\gz)}{p_c(\gz)}+2(c\,\gz+1)^2\Big(\sum_{a\in \hat{\cala}} \frac{x_ad_as_a}{1+x_a \gz}\Big),$$
where $G(\gz):= \frac{F(\gz)}{(c\,\gz+1)^{m+1}}$ and $m=(\Sigma_{a\in\hat\cala}d_a)+1$.

Now, consider the Futaki Invariant $Fut_{K_c}(Z)$ of \eqref{Futinv}
for any $Z \in \gt$. Using Remark B.2. of \cite{ApCaLe21}, we have that, up to a multiple by a non-zero constant, 
$$Fut_{K_c}(Z) = \int_{-1}^1\left(Scal_{c\,\gz+1,m+2}(g)-C_{K_c}\, (c\,\gz+1)\right)f_Z (c\,\gz+1)^{-m-3}p_c(\gz)\,d\gz,$$
where $f_Z=\eta^X_\cald(Z)$ and, using (48) of \cite{ApCaLe21},
$$C_{K_c} = \frac{ \int_{-1}^1 Scal_{c\,\gz+1,m+2}(g)(c\,\gz+1)^{-m-2} p_c(\gz)\,d\gz}{ \int_{-1}^1(c\,\gz+1)^{-m-1} p_c(\gz)\,d\gz} = \frac{2\beta_{0,-m}}{\alpha_{0,-m-1}}.$$

Let $Z= \gz X$ with $\gz$ lifted to $(M,\cals)$. We can view $Z$ as $\frac{d}{dc} K_c$.
Using the useful observation from the proof of Theorem \ref{CSCexistence} that
\begin{itemize}
\item $c\, \alpha_{r+1,k}+\alpha_{r,k}=\alpha_{r,k+1}$
\item $c\, \beta_{r+1,k}+\beta_{r,k}=\beta_{r,k+1}$,
\end{itemize}
it is straightforward to verify that 
\begin{eqnarray}
Fut_{K_c}(Z) & = &\frac{2(\alpha_{0,-(m+1)}\beta_{1,-(m+1)}-\alpha_{1,-(m+2)}\beta_{0,-m})}{\alpha_{0,-(m+1)}}\\
&=& \frac{2(\alpha_{1,-(m+2)}\beta_{0,-(m+1)}-\alpha_{0,-(m+2)}\beta_{1,-(m+1)})}{\alpha_{0,-(m+1)}} \notag \\
&=&  \frac{2}{\alpha_{0,-(m+1)}} \bigr(\alpha_{1,-(m+2)}\beta_{0,-(m+1)}-\alpha_{0,-(m+2)}\beta_{1,-(m+1)}\bigl)  \notag.
\end{eqnarray}
Thus, equation \eqref{CSCobstruction} is equivalent to the vanishing of $Fut_{K_c}(Z)$.

In conclusion, \eqref{CSCobstruction} is an obstruction to the existence (up to isotopy) of a CSC Sasaki structure in the ray determined by $K_c$. This also follows from
the more ``expensive'' Theorem 3 in \cite{ApCaLe21} and the discussion leading up to \eqref{CSCobstruction}.
Indeed, combining Theorem 3 in \cite{ApCaLe21} and Proposition 2.2 of \cite{ApMaTF18} with the above observations we can state a very explicit existence criterion.
To do so we need a few preliminary notations lifted straight from \cite{ApMaTF18} with some adaptation for our choice of $f=c\, \gz+1$:

Let $A_1$ and $A_2$ be given by the unique solution to the linear system
\eqref{A1andA2} and let 
$$Q(\gz) =\frac{p_c(\gz)}{(c\,\gz+1)^{m+1}} \Big(\sum_{a\in \hat{\cala}} \frac{2x_ad_as_a}{1+x_a \gz}\Big)-\frac{(A_1\gz + A_2)p_c(\gz)}{(c\,\gz+1)^{m+3}}.$$
Now let
$$G(\gz) = \frac{2p_c(-1)}{(1-c)^{m+1}}(\gz+1) + \int_{-1}^\gz Q(t)(\gz-t)\,dt$$
and consider the {\em weighted extremal polynomial}
\begin{equation}\label{wextrpol}
F_{\Omega,c,m+2}(\gz)=(c\,\gz+1)^{m+1}G(\gz).
\end{equation}

\begin{proposition}\label{existencecriterion}
Let $\Omega$ be a rational admissible K\"ahler class on an admissible K\"ahler manifold $N^{ad}=\bbp(E_0\oplus E_\infty) \rightarrow N$
 where $N$ is a compact K\"ahler manifold which is a local product\footnote{See Definition \ref{adkahman} for what we mean by ``local product'' here.} of CSCK metrics. Pick any admissible K\"ahler metric $g$ with K\"ahler form in $\Omega$ and let the moment map be given by $\gz: N^{ad}\rightarrow [-1,1]$. Let $(M,\cals)$ be the Boothby-Wang constructed Sasaki manifold given by an appropriate rescale of $\Omega$. Let $c\in (-1,1)$ and consider the corresponding weighted extremal polynomial $F_{\Omega,c,m+2}(\gz)$ given by \eqref{wextrpol}.
 Then 
 \begin{itemize}
\item \cite{ApCaLe21} the Reeb vector field $K_c$ determined by $f_{K_c}=c\,\gz+1$ is extremal (up to isotopy) if and only if
 $$F_{\Omega,c,m+2}(\gz)>0,\quad -1<\gz<1$$
 \item the Reeb vector field $K_c$ determined by $f_{K_c}=c\,\gz+1$ is CSC (up to isotopy) if and only if
 $$F_{\Omega,c,m+2}(\gz)>0,\quad -1<\gz<1, \quad \text{and} \quad \underbrace{\alpha_{1,-m-2}\beta_{0,(-m-1)}-\alpha_{0,-m-2}\beta_{1,(-m-1)}=0}_{Equation \eqref{CSCobstruction}},$$
 where $\alpha_{r,k}$ and $\beta_{r,k}$ are given by \eqref{alphabeta}.
 \end{itemize}
 \end{proposition}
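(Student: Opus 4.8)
The plan is to deduce the second bullet from the first by intersecting the extremality criterion with the vanishing locus of the Sasaki--Futaki invariant, following the general principle \eqref{eq:cscINTERS} that $\gc\gs\gc(\cald,J)=\ge(\cald,J)\cap\{Fut=0\}$. I take the first bullet as given (it is Theorem 3 of \cite{ApCaLe21} combined with Proposition 2.2 of \cite{ApMaTF18}), so the only real task is to show that, once $K_c$ is known to be extremal, the Futaki obstruction collapses to the single scalar equation \eqref{CSCobstruction}.

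Assuming $F_{\Omega,c,m+2}>0$ on $(-1,1)$, the link of \cite{ApCa18} recalled in \S\ref{ss:ExtSas} furnishes the admissible $(c\,\gz+1,m+2)$-extremal metric $g$, whose weighted scalar curvature is the affine function $Scal_{c\,\gz+1,m+2}(g)=A_1\gz+A_2$, with $A_1,A_2$ the solution of the linear system \eqref{A1andA2}. By the same correspondence the transverse scalar curvature of the Sasaki structure with Reeb field $K_c$ is $s^T:=(A_1\gz+A_2)/(c\,\gz+1)$. This is constant --- i.e.\ the structure is CSC --- exactly when $A_1-c\,A_2=0$, and the algebra carried out in the proof of Theorem \ref{CSCexistence} identifies this with \eqref{CSCobstruction}.

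To match this with the vanishing of the full invariant $Fut_{K_c}$ on $\gt$, I would note that \S\ref{admissibleSFInvariant} already computes $Fut_{K_c}(\gz X)$ to be, up to a positive factor, the left-hand side of \eqref{CSCobstruction}; it then remains to see that $Fut_{K_c}$ vanishes on every complementary generator of $\gt$. Since the weighted scalar curvature of the extremal metric depends only on $\gz$, the extremal vector field --- the transversely holomorphic gradient of $s^T$ --- lies in the plane $\mathrm{span}(X,\gz X)$; the contributions of the torus acting on $N$ vanish because $N$ is a local product of CSCK metrics, and those of the fibre factors $\bbc\bbp^{d_0},\bbc\bbp^{d_\infty}$ vanish by the constant-scalar-curvature Fubini--Study symmetry. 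Hence $Fut_{K_c}\equiv 0$ is equivalent to $Fut_{K_c}(\gz X)=0$, i.e.\ to \eqref{CSCobstruction}.

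The step I expect to be delicate is precisely this last reduction: certifying that one scalar equation controls the a priori multidimensional functional $Fut_{K_c}$. The cleanest way to sidestep a direction-by-direction verification is to work with constancy of $s^T$ directly, since that is by definition the CSC condition; \eqref{eq:cscINTERS} is then only needed to guarantee that the admissible extremal metric is the correct representative to test, its existence being supplied by the first bullet. Putting the pieces together, $K_c\in\gc\gs\gc(\cald,J)$ holds iff $K_c$ is extremal and $s^T$ is constant, iff $F_{\Omega,c,m+2}>0$ on $(-1,1)$ and $A_1-c\,A_2=0$, which is the asserted criterion.
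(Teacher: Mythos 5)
Your proposal is correct and takes essentially the same route as the paper: the first bullet is quoted from \cite{ApCaLe21} together with Proposition 2.2 of \cite{ApMaTF18}, and the second bullet is obtained, exactly as in the paper, by testing constancy of $s^T=(A_1\gz+A_2)/(c\,\gz+1)$ for the admissible $(c\,\gz+1,m+2)$-extremal representative, i.e.\ $A_1-c\,A_2=0$, which the algebra in the proof of Theorem \ref{CSCexistence} and the computation of $Fut_{K_c}(\gz X)$ in Section \ref{admissibleSFInvariant} identify with \eqref{CSCobstruction}. Your middle paragraph attempting a direction-by-direction verification that $Fut_{K_c}$ vanishes on all of $\gt$ is the only hand-wavy step, but you correctly discard it in favour of the direct constancy argument (for the converse direction one only needs that a CSC representative forces $Fut_{K_c}(\gz X)=0$, since the Futaki invariant depends only on the isotopy class), which is precisely how the paper argues.
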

 
 \begin{remark}\label{extrpol}
 Note that $F_{\Omega,0,m+2}(\gz)$ is the usual admissible extremal polynomial for $\Omega$ (see Definition 1 in \cite{ACGT08}).
 \end{remark}

\subsection{Sasaki-Reeb cones with extremal but no CSC Sasaki metrics}
Existence results like Theorem \ref{CSCexistence} give rise to the question of whether a Sasaki CR structure with extremal metrics always has a CSC Sasaki metric in its Sasaki-Reeb cone. This was formulated as Problem 6.1.2 in \cite{BHLT21}. Here we give a negative answer to this question, namely:

\begin{theorem}\label{nocscthm}
There exist Sasaki CR structures with extremal Sasaki metrics, but no CSC Sasaki metrics.
\end{theorem}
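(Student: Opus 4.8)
The plan is to produce an explicit example within the admissible framework developed above, choosing the base $N$ and the vector bundle data so that the extremal Sasaki--Reeb cone is nonempty while the CSC obstruction \eqref{CSCobstruction} never vanishes on the relevant subcone. The natural strategy is to work with a small-dimensional admissible bundle $N^{ad}=\bbp(E_0\oplus E_\infty)\lra N$ and exploit Proposition \ref{existencecriterion}, which reduces both the extremal and the CSC conditions to explicit, computable statements about the weighted extremal polynomial $F_{\Omega,c,m+2}(\gz)$ and the single scalar function
$$\Psi(c):=\alpha_{1,-m-2}\,\beta_{0,-m-1}-\alpha_{0,-m-2}\,\beta_{1,-m-1}.$$
Concretely, I would first select admissible data $(s_a,x_a,d_a)$ for which Proposition \ref{existencecriterion} guarantees $F_{\Omega,c,m+2}(\gz)>0$ on $(-1,1)$ for an open interval of parameters $c$, so that the $2$-dimensional subcone spanned by the constants and $c\,\gz+1$ lies inside $\ge(\cald,J)$. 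This establishes that $\ge(Y,\widehat J)\neq\emptyset$.

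The second, crucial step is to arrange that $\Psi(c)\neq 0$ for every $c\in(-1,1)$, so that no Reeb vector field of the form $f_{K_c}=c\,\gz+1$ is CSC. Here I would deliberately violate the sign hypothesis of Theorem \ref{CSCexistence}: that theorem forces a CSC ray precisely because $N$ is a local product of \emph{nonnegative} CSCK metrics (which guarantees $\beta_{0,-m}>0$ and hence, via the Einstein--Hilbert functional $H_S(c)$ blowing up at $c\to\pm 1^{\mp}$, the existence of an interior critical point). By instead taking some of the base factors to have \emph{negative} scalar curvature, the positivity of $\beta_{0,-m}$ fails, the boundary behavior of $H_S$ is no longer controlled, and one can hope to make $\Psi$ strictly signed on the whole interval. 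I would pick the simplest such candidate---for instance a bundle over a product of projective lines, or over a curve/surface of general type admitting a negative CSCK (KE) metric---and compute $\Psi(c)$ as an explicit rational function of $c$, showing it has no zero in $(-1,1)$.

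The remaining step is to rule out CSC metrics lying \emph{outside} the $2$-dimensional admissible subcone, since a priori $\gc\gs\gc(Y,\widehat J)$ could be nonempty even if no $K_c$ is CSC. The cleanest way to handle this is to arrange that the full Sasaki--Reeb cone is itself $2$-dimensional (or that the larger torus directions are obstructed by the transverse Futaki invariant computed in \S\ref{admissibleSFInvariant}), so that the candidate Reeb vector fields are exhausted by the family $K_c$ together with reparametrizations; equivalently, one restricts attention to a base $N$ with $\dim\gt=0$, so that by the dimension count $d_0+d_\infty+2+\dim(\gt)$ the cone reduces exactly to the admissible rays. Then the nonvanishing of $\Psi$ on $(-1,1)$ combined with the characterization \eqref{eq:cscINTERS} yields $\gc\gs\gc=\emptyset$ while $\ge\neq\emptyset$.

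I expect the main obstacle to be the second step: choosing the negative-curvature data so that $\Psi(c)$ is \emph{simultaneously} sign-definite on all of $(-1,1)$ \emph{and} compatible with the extremal positivity $F_{\Omega,c,m+2}>0$. These two requirements pull in opposite directions---extremality is easiest when the weighted polynomial stays positive (a feature associated with the ``nice'' nonnegative case), whereas killing the CSC solution requires destroying exactly the boundary blow-up that forced a critical point. Balancing them will likely require a careful choice of the $x_a$ and a genuine (if routine) computation of $\Psi$ as a rational function, followed by verification that its numerator has no root in $(-1,1)$; a convenient way to certify this is to check the endpoint signs of $\Psi$ and argue monotonicity, or simply to evaluate the explicit polynomial. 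Once a single such datum is found, Theorem \ref{nocscthm} follows immediately, and one records it as the promised Example \ref{prob612}.
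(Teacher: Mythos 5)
Your overall architecture matches the paper's: work with an admissible bundle whose base has \emph{negative} scalar curvature factors (so Theorem \ref{CSCexistence} does not apply), arrange $\dim\gt=0$ so that the Sasaki--Reeb cone is exactly the $2$-dimensional family $K_c$, use Proposition \ref{existencecriterion} to certify extremality of some rays, and use the rational function $\Psi(c)$ to obstruct CSC. The paper's Example \ref{prob612} does precisely this with $\bbp(\calo\oplus\calo(1,-1))\to\Sigma_1\times\Sigma_2$ over genus-$2$ curves, $x_1=-x_2=4/5$.

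However, your ``second, crucial step'' --- arranging that $\Psi(c)\neq 0$ for \emph{every} $c\in(-1,1)$ --- cannot be carried out. As recalled in the introduction (citing \cite{BHL17}), the set $\{\xi\in\gt^+\,|\,\mathrm{Fut}_\xi=0\}$ is non-empty for \emph{any} complex cone over a compact Sasaki manifold; since here the cone is $2$-dimensional, spanned by $K_c$ and $Z=\gz X$, and $Fut_{K_c}(K_c)=0$ automatically, the vanishing of $\mathrm{Fut}_{K_c}$ on $\gt$ is equivalent to $\Psi(c)=0$, so $\Psi$ is forced to have at least one zero in $(-1,1)$ no matter how you choose the data. (Indeed, in the paper's example $\Psi$ vanishes exactly at $c=0$; see \eqref{FutExNoCSC}.) The missing idea is that you must instead arrange for the Futaki-vanishing ray(s) to fail the \emph{extremality} test: the class $\Omega_{4/5}$ is chosen so that its K\"ahler Futaki invariant vanishes but its extremal polynomial $F_{\Omega_{4/5},0,5}$ is not positive on $(-1,1)$, so the $c=0$ ray admits no extremal (hence no CSC) Sasaki metric, while $\Psi(c)\neq 0$ for all $0<|c|<1$ kills CSC on every other ray, and positivity of $F_{\Omega_{4/5},c,5}$ for $c$ near $\pm 1$ gives $\ge\neq\emptyset$. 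With that correction --- i.e., concluding via $\gc\gs\gc=\ge\cap\{\Psi=0\}=\emptyset$ rather than $\{\Psi=0\}=\emptyset$ --- your plan becomes the paper's proof. A second, smaller point: a bundle over a product of projective lines would not serve your purpose, since those base factors are nonnegative CSCK and would place you back in the setting of Theorem \ref{CSCexistence}.
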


\begin{proof}
We use the fact that a Sasakian structure $(M,\cals)$ in the extremal subcone $\ge(\cald,J)\subset \gt^+(\cald,J)$ is CSC if and only if its Futaki invariant $\calf_\cals$ vanishes and we note that $\calf_\cals$ only depends on the isotopy class \cite{BGS06}. The proof proceeds by constructing an appropriate Sasaki CR structure over a certain smooth projective algebraic variety as described by the following example.
\end{proof}

\begin{remark}\label{dim1rem}
Before presenting the example we note that Theorem \ref{nocscthm} is false when restricted to Sasaki CR structures with a 1-dimensional Sasaki-Reeb cone.
\end{remark}

\begin{example}\label{prob612}
We begin by revisiting the example presented carefully in Section 6 of \cite{MaTF11}. The admissible projective variety in question is $N^{ad} = \bbp(\calo \oplus \calo(1,-1)) \to \Sigma_1 \times \Sigma_2$, where $\Sigma_i$ are compact genus two Riemann surfaces and $g_1$ and $-g_2$ are K\"ahler metrics on $\Sigma_1$ and $\Sigma_2$, respectively that have constant scalar curvature equal to $-4$. Following the notation in \cite{ACGT08}, this means that $s_1=-2$, $s_2=2$, $0<x_1<1$, and $-1<x_2<0$, where the choice of $\bfx=(x_1,x_2)$ determines the K\"ahler class
$$ \Omega_{\bfx} = \frac{[\omega_1]}{x_1} + \frac{[\omega_2]}{x_2} + \Xi,$$
where $\Xi$ is the Poincare dual of $2\pi(e_0+e_\infty)$. Since here $[\omega_i]$ and $\Xi$ are primitive integer classes, we see that $\Omega_\bfx$ is $2\pi$ times a rational class
if and only if $x_i$ are rational numbers. Note that this example does not satisfy the conditions assumed in Theorem \ref{CSCexistence}.

Now, as explained in Section 6 of \cite{MaTF11}, when $x_1=x$ and $x_2=-x$ for any $0<x<1$, the Futaki invariant of the corresponding K\"ahler class vanishes (so any extremal metric in that class would be CSC) and the class contains a CSC K\"ahler metric if and only if the {\em extremal polynomial}, $F_x(\gz) = \frac{(1-\gz^2)(6 - 7x^2 -4x^3 + x^4 - x^2 (1 - 4x -x^2)\gz^2)}{2(3-x^2)}$, is positive for $-1<\gz<1$. The latter follows from Theorem 1 in \cite{ACGT08}. As also explained in \cite{MaTF11}, for any $0<x<1$ sufficiently small, this positivity holds and we have CSC K\"ahler metrics in the corresponding class (as expected from Theorem 1 of \cite{ACGT08}), but when e.g. $x=4/5$, positivity fails and the corresponding class has no CSC K\"ahler metric and no extremal K\"ahler metric at all. The class can still be represented by some admissible K\"ahler metric. Moving forward we assume $x_1=x$, $x_2=-x$, and $x=4/5$. Note that $F_{4/5}(\gz) =\frac{ (1-\gz^2)(568 \gz^2-37)}{1475}$.

Consider a Boothby-Wang constructed Sasaki manifold $(M,\cals)$ above $N^{ad}$ with respect to an appropriate rescale of the K\"ahler class $\Omega_{4/5}$ determined by
$x_1=4/5$, $x_2=-4/5$ (pick e.g. a (rescale of an) admissible K\"ahler metric in $\Omega_{4/5}$).
Then the (rescales of) Reeb vector fields $K_c$ induced by the Killing potentials $c\,\gz+1$ for $c \in (-1,1)$ span the entire Sasaki-Reeb cone of $(M,\cals)$ (since genus of $\Sigma_i=2$, this cone has dimension $2$)). 

Using $\eqref{A1andA2}$ for this particular case (where $p_c(t)=(1-(4/5)^2 t^2)$) with $p=5$, we calculate that 
$A_1=-\frac{4 c \left(535 c^4+11566 c^2-17933\right)}{115 c^4-3686 c^2+4543}$ and  
$A_2=\frac{2 \left(12305 c^6-13795 c^4+6227 c^2-16401\right)}{115 c^4-3686 c^2+4543}$.
By Proposition \ref{existencecriterion}, we see that
a ray in the Sasaki-Reeb cone corresponding to a value $c \in (-1,1)$ admits an extremal Sasaki metric (up to isotopy) if and only if
\tiny
$$ F_{\Omega_{4/5},c,5}(\gz)=\frac{(1-\gz^2)  \left(115 c^4  \left(553 z^2-535\right)+110070 c^3 \gz(1-\gz^2)+c^2(98651-164999  \gz^2)-149760 \,c \,\gz (1-\gz^2)+154 \left(568 \gz^2-37\right)\right)}{50 \left(115 c^4-3686 c^2+4543\right)}$$
 \normalsize
is positive for $-1<\gz < 1$. 

As expected from Remark \ref{extrpol}, $F_{\Omega_{4/5},0,5}(\gz) =F_{4/5}(\gz)$. So for $-1<c<1$ sufficient close to zero, this positivity certainly fails. On the other hand, one can easily verify that
$$F_{\Omega_{4/5},-1,5}(\gz)=\frac{1}{300} (1-\gz^2)  \left(-245 \gz^3-86 \gz^2+245 \gz+194\right)$$ and $$F_{\Omega_{4/5},1,5}(\gz)=\frac{1}{300} (1-\gz^2)  \left(245 \gz^3-86 \gz^2-245 \gz+194\right)$$ both are positive for $-1<\gz < 1$. Thus we may conclude that for $c \in (-1,1)$ sufficiently close to either $-1$ or $+1$, $F_{\Omega_{4/5},c,5}(\gz)$ is positive for $-1<\gz<1$.

We also can calculate that in this case equation \eqref{CSCobstruction} is equivalent to
\begin{equation}\label{FutExNoCSC}
\frac{4 c  \left(-2461 c^4+512 c^2+3893\right)}{5625 (1-c^2)^7}=0.
\end{equation}
Note that this equation has no solution for $0<|c|<1$.

From the above work we see that, while $\cals$ itself (``initial ray'') is not extremal, for $c \in (-1,1)$ sufficiently close to either $-1$ or $+1$
(``far enough away from the initial ray''), the corresponding ray does have an extremal Sasaki metric (up to isotopy). 
Moreover, since here equation \eqref{CSCobstruction} (i.e. equation \eqref{FutExNoCSC}) has no solution for all $0<|c|<1$, there are no CSC rays in the Sasaki-Reeb cone at all. [Essentially, the only shot at a CSC ray would have been the initial ray of the Boothby-Wang construction, but since this ray
allows no extremal metric at all, it is not a CSC ray either.] This example proves Theorem \ref{nocscthm} and tells us that the answer is "no" to the question in Problem 6.1.2. of \cite{BHLT21}.

We can dig a little deeper and numerically investigate the extremal Sasaki cone $\ge(\cald,J)$ in $\gt^+(\cald,J)$. In this case we identify
 $\gt^+(\cald,J)$ with the interval $\calc := \{ c\,\,|\, -1<c<-1 \}$ swept out by transverse homotheties, and then correspondingly $\ge(\cald,J)$ is identified with 
 $$\calc_\ge := \{c\in \calc\,|\, \forall \gz \in (-1,1),\, F_{\Omega_{4/5},c,5}(\gz)>0\}=  \{c\in \calc\,|\, \forall \gz \in (-1,1),\, P(\gz,c)>0\},$$
 swept out by transverse homotheties where
 $$
 \begin{array}{ccl}
 P(\gz,c) & = & 115 c^4  \left(553 \gz^2-535\right)+110070 c^3 \gz(1-\gz^2)+c^2(98651-164999  \gz^2)\\
 \\
&- &149760 \,c \,\gz (1-\gz^2) + 154 \left(568 \gz^2-37\right)\\
\\
&=& -5698 + 98651 c^2 - 61525 c^4 - 149760 c \gz + 110070 c^3 \gz + 
 87472 \gz^2 \\
 \\
 &- & 164999 c^2 \gz^2 + 63595 c^4 \gz^2 + 149760 c \gz^3 - 
 110070 c^3 \gz^3.
 \end{array}
 $$
 Equivalently, $\calc\setminus \calc_\ge = \{ c\in \calc\,|\, \exists \gz \in (-1,1)\,s.t.\,P(\gz,c)\leq 0\}$. To understand the latter set we need to see what part of the graph of 
 $y=P(\gz,c)$, as defined over the square $\{(\gz, c)\,|\, -1<\gz<1,\, -1<c<1\}$, is below or at  ``sea level'' (i.e. at or below $y=0$). 
 
 Rather than attempting a rigorous treatment here, we shall rely on Mathematica to give us some clue: The plot of $y=P(\gz,c)$ looks as follows:
 
 \includegraphics[scale=0.6]{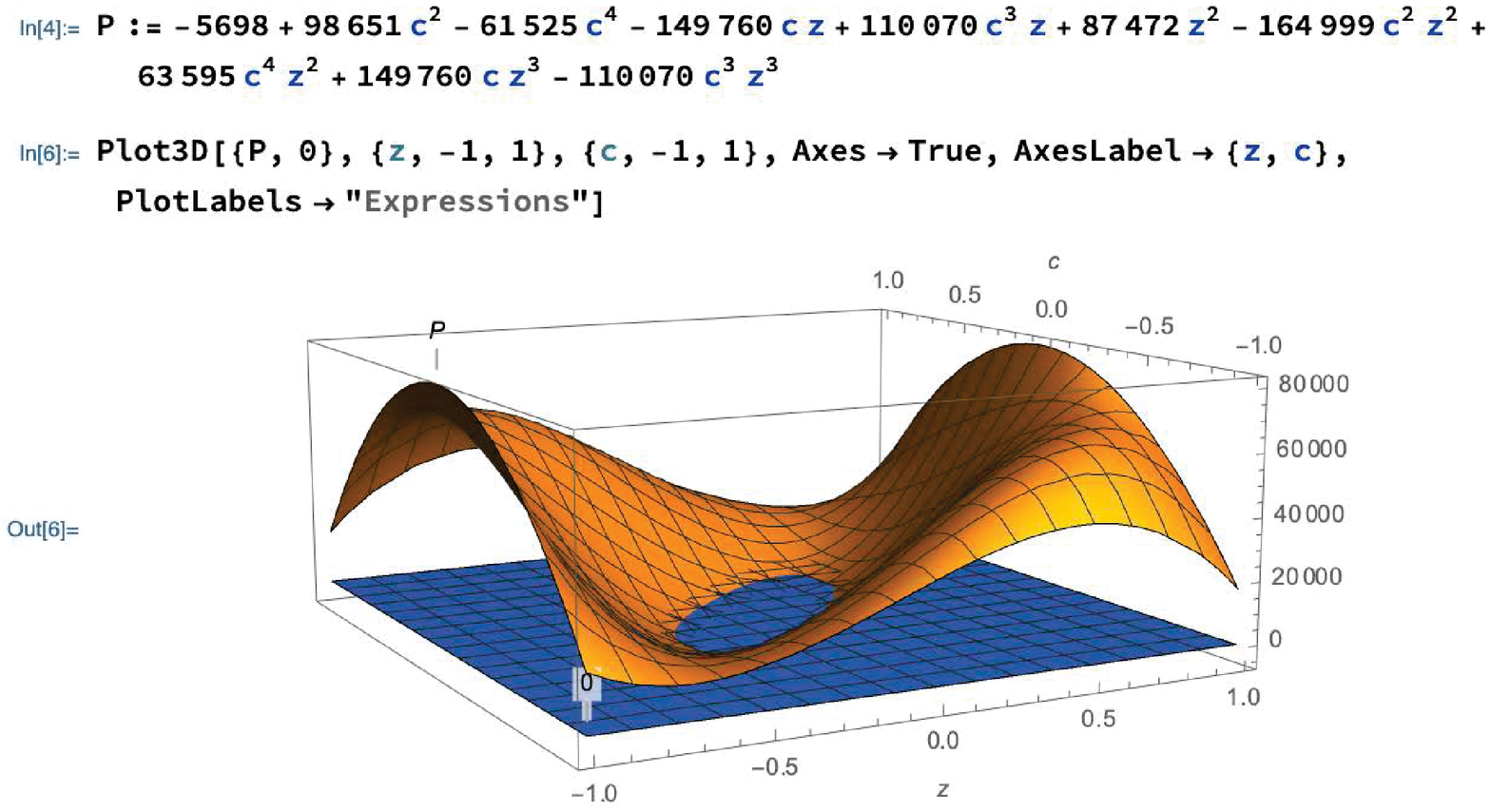}
 
 From this picture it looks like $\calc\setminus \calc_\ge$ is one connected subset of $\calc$, splitting $\calc_\ge$ into two connected pieces. This is further underscored by the fact that the plot of $P(\gz,c)=0$ in the square $\{(\gz, c)\,|\, -1<\gz<1,\, -1<c<1\}$ appears to be a simple closed curve:
  \includegraphics[scale=0.4]{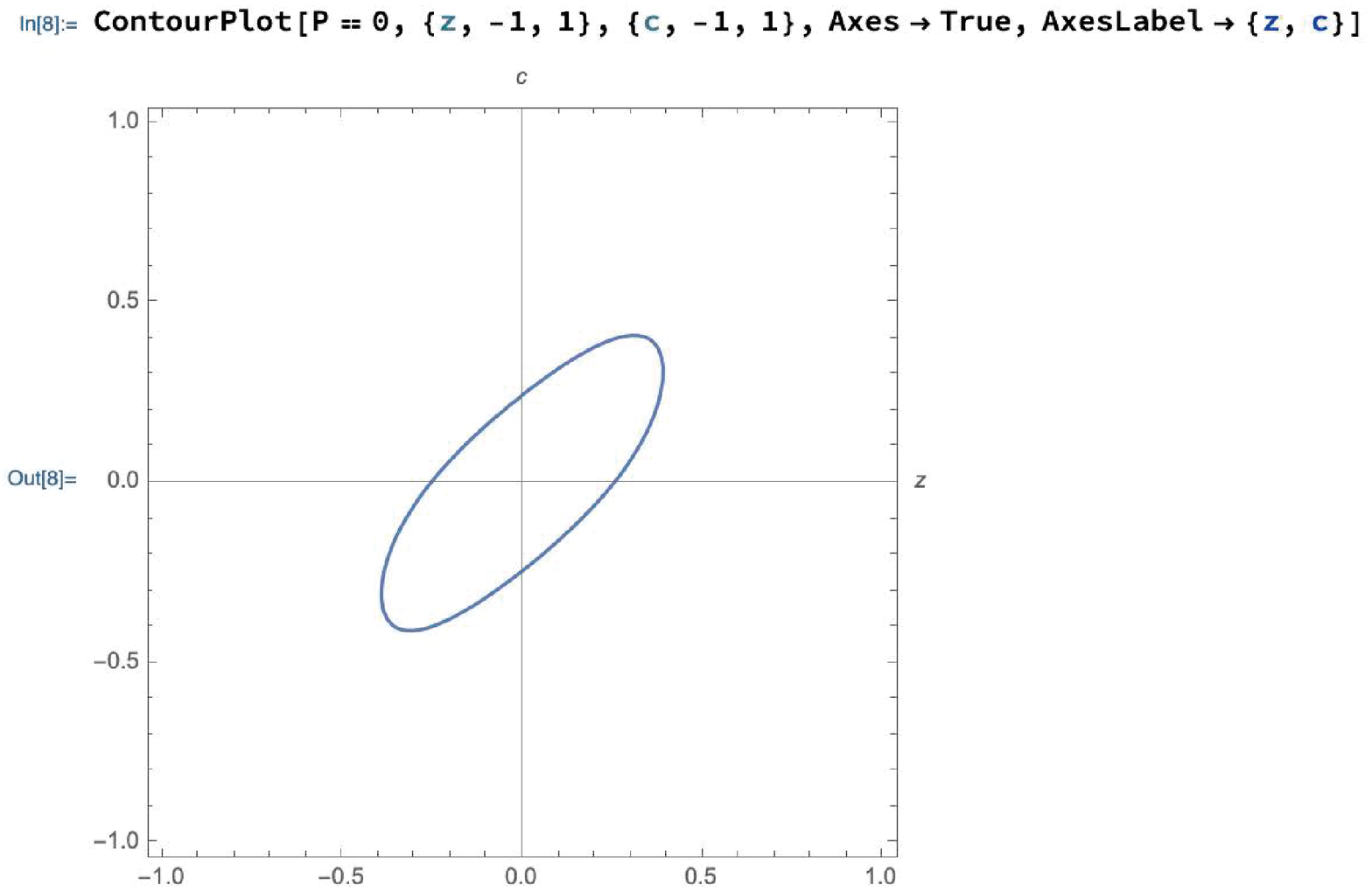}
 
 We are now interested in finding the maximum and minimum $c$-values on this curve. This should give an estimate of $\calc\setminus \calc_\ge$, which seems to be an interval.
 To that end, by the theory of Lagrange Multipliers together with the picture at hand, we need to solve the system 
 $$
 \begin{array}{ccl}
\frac{\partial P}{\partial \gz} & = &0\\
 \\
 P&=&0
 \end{array}
 $$
 over the interval $\{(\gz, c)\,|\, -1<\gz<1,\, -1<c<1\}$. We calculate that
 $$\frac{\partial P}{\partial \gz} =2 (-74880 c + 55035 c^3 + 87472 \gz - 164999 c^2 \gz + 63595 c^4 \gz + 
   224640 c \gz^2 - 165105 c^3 \gz^2)$$ and find that numerically the above system has two solutions
   in $\{(\gz, c)\,|\, -1<\gz<1,\, -1<c<1\}$ with $c$ values approximately equal to $\pm 0.410752$:
 
 \includegraphics[scale=0.6]{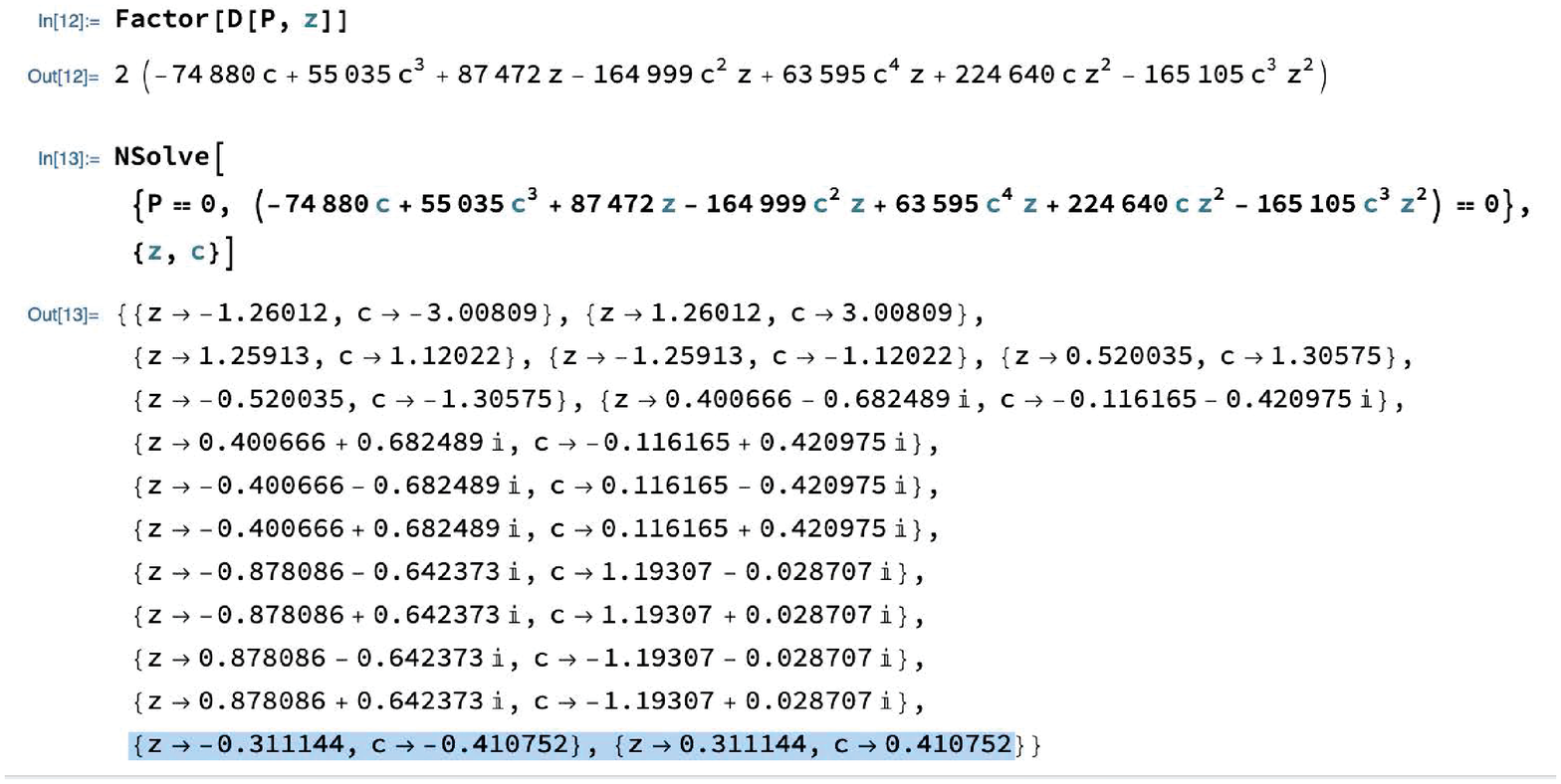}
In conclusion, {\em the numerical evidence gathered from Mathematica tells us that $\calc\setminus \calc_\ge = [-\hat{c},\hat{c}]$ and $\calc_\ge = (-1,-\hat{c}) \cup (\hat{c},1)$, where $\hat{c} \approx 0.410752$.}
\end{example}
 
 Another example can be given using the Yamazaki fiber join construction.

 \begin{example}
 Note that if instead we have $N^{ad} = \bbp(\calo \oplus \calo(8k_2^1,-8k_1^2)) \to \Sigma_1 \times \Sigma_2$, where genus of 
 $\Sigma_1$ is $1+8k_2^1$, and genus of 
 $\Sigma_2$ is $1+8k_1^2$, we still have $s_1=-2$ and $s_2=2$ as in Example \ref{prob612}. If we further use the K\"ahler class corresponding to $x_1=-x_2=4/5$ on $N^{ad}$, we have the same calculations and conclusion as above. In this case, we can view $N^{ad}$ with $\Omega$ as the regular quotient of a Yamazaki fiber join as described in Section 5.3 of
 \cite{BoTo20b} with $K=\begin{pmatrix} 9k_2^1&k_1^2\\
 k_2^1&9k_1^2 \end{pmatrix}.$
 \end{example}

\subsection{Sasaki-Reeb cones with no extremal Sasaki metrics} 
There are many examples of Gorenstein Sasaki-Reeb cones with no extremal metrics whatsoever \cite{BovC16}. Here we provide examples in the non-Gorenstein case, i.e. when the polarization is not that of the canonical or anticanonical bundle.

\begin{example}\label{nonexistenceex}
We will assume $N^{ad}= \bbp(\calo \oplus \calo(1,1)) \to \Sigma_1 \times \Sigma_2$, where $\Sigma_i$ are compact Riemann surfaces of
genus $101$ and $51$ respectively. Let $g_1$ and $g_2$ be K\"ahler metrics on $\Sigma_1$ and $\Sigma_2$, with constant scalar curvature equal to $-400$ and $-200$, respectively. This means that $s_1=-200$, $s_2=-100$, $0<x_i<1$. Again, using Proposition \ref{existencecriterion}, we calculate that for a Boothby-Wang  constructed Sasaki manifold with respect to an appropriate rescale of the rational K\"ahler class $\Omega$ determined by $x_1=100/101$ and $x_2=9/10$, respectively, the Reeb vector field $K_c$ induced by the Killing potential $c\,\gz+1$ for $c \in (-1,1)$ is extremal if an only if 
 $$F_{\Omega,c,5}(\gz)=\frac{(1-\gz^2)g_c(\gz)}{2020\left(9714895 c^4-41234400 c^3+65557582 c^2-46274160 c+12236095\right)}$$
 is positive for $-1<\gz<1$, where
 \tiny
 $$
 \begin{array}{ccl}
  g_c(\gz) & = & -2 \left(230646399215 c^4-1016343615871 c^3+1668631275785 c^2-1210756275690 c+327823027600\right)\\
  \\
  &+& (-338414479445 c^4+1516758787660 c^3-2526484662221 c^2+1856228939280 c-508090145290)\gz\\
  \\
  &+& 2 \left(249201848665 c^4-1095101319871 c^3+1793846257405 c^2-1299139921290 c+351193969050\right)\gz^2\\
  \\
  &+& (375505948555 c^4-1674191726860 c^3+2776783510297 c^2-2032903682160 c+554807556000) \gz^3.
  \end{array}
  $$
  \normalsize
 Since 
 $$
 \begin{array}{cl}
 &9714895 c^4-41234400 c^3+65557582 c^2-46274160 c+12236095\\
 \\
 =& 9714895 (1-c)^4+2374820 (1-c)^3+143752 (1-c)^2+2616 (1-c)+12\\
 \\
 >& 0
 \end{array}
 $$ 
 for all $c\in (-1,1)$, positivity of $F_{\Omega,c,5}(\gz)$ for $-1<\gz<1$ is equivalent to positivity of
 $g_c(\gz)$ for  $-1<\gz<1$.
 However, e.g. 
\tiny
$$g_c(0) = -2 \left(230646399215 c^4-1016343615871 c^3+1668631275785 c^2-1210756275690 c+327823027600\right)$$
\normalsize
is negative for all $c\in (-1,1)$, so
positivity of $F_{\Omega,c,5}(\gz)$ for  $-1<\gz<1$ will not hold for any value of $c\in (-1,1)$. Similarly to Example \ref{prob612}, the Sasaki-Reeb cone here is parametrized by $c\in (-1,1)$ and thus it follows that,
in this case, the Sasaki-Reeb cone has no extremal rays at all.
 \end{example}

\begin{example} 
 Note that if we instead we have $N^{ad} = \bbp(\calo \oplus \calo(200k_2^1,18k_2^2)) \to \Sigma_1 \times \Sigma_2$, where genus of 
 $\Sigma_1$ is $20000 k_2^1+1$, and genus of 
 $\Sigma_2$ is $900 k_2^2 +1$, we still have $s_1=-200$, $s_2=-100$ as in Example \ref{nonexistenceex}. If we further use the K\"ahler class corresponding to $x_1=100/101$ and $x_2=9/10$ on $N^{ad}$, we have the same calculations and conclusion as above. In this case, we can view $N^{ad}$ with $\Omega$ as the regular quotient of a Yamazaki fiber join as described in Section 5.3 of
 \cite{BoTo20b} with $K=\begin{pmatrix} 201k_2^1&19 k_2^2\\
 k_2^1&k_2^2 \end{pmatrix}$. Thus there is a countably infinite family of Yamazaki fiber joins that have no extremal Sasaki metrics at all. 

\end{example}

\begin{remark}
These non-Gorenstein examples all have a $2$-dimensional Sasaki-Reeb cone $\gt^+$, as opposed to the examples in \cite{BovC16}) where $\gt^+$ can be bigger.
\end{remark}

\section{The topology of $(M,\cals)$}
Here we describe, more generally, the topology of the Boothby-Wang Sasaki manifold over the projective bundle $N^{ad}$ some of which has been described in previous work \cite{BoTo21} as well as briefly discussed in examples \ref{Bottmanex} and \ref{K3ex}. 

\begin{proposition}\label{BWsphere}
Let $S^1\lra M\lra N^{ad}=\bbp(E_0\oplus E_\infty)$ be the Boothby-Wang-Sasaki bundle. Then $M$ is the total space of a lens space bundle over $N$.  
\end{proposition}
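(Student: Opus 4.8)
The plan is to factor the projection $M \to N$ through the Boothby-Wang circle bundle and the projective bundle, namely $M \to N^{ad} \xrightarrow{\pi} N$, and to recognise the fibre of the composite as a lens space. Write $E = E_0 \oplus E_\infty$, a Hermitian holomorphic vector bundle over $N$ of rank $r = d_0 + d_\infty + 2$, so that the fibre of $\pi$ over $b \in N$ is $F_b = \bbp((E_0)_b \oplus (E_\infty)_b) \cong \bbc\bbp^{r-1}$. The fibre of the composite $M \to N$ over $b$ is then the restriction of the circle bundle $M \to N^{ad}$ to $F_b$, and the first task is to identify this restriction.

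First I would pin down that restriction. The Boothby-Wang manifold $M \to N^{ad}$ is the unit circle bundle of a Hermitian holomorphic line bundle $L$ with $c_1(L) = [\omega/2\pi]$ for the integral rescale of $\Omega$. Restricting to the complex submanifold $F_b \cong \bbc\bbp^{r-1}$ gives $L|_{F_b} \cong \calo_{\bbc\bbp^{r-1}}(k)$, and since $\Omega$ is K\"ahler its restriction to $F_b$ is again K\"ahler, so $k = \langle c_1(L), [\text{line}]\rangle > 0$. The unit circle bundle of $\calo_{\bbc\bbp^{r-1}}(k)$ is the quotient of the unit sphere $S^{2r-1} \subset \bbc^r$ (the circle bundle of $\calo(1)$) by the diagonal action of the group $\bbz_k$ of $k$-th roots of unity inside the Hopf circle; this quotient is the lens space $L(k;1,\dots,1) = S^{2r-1}/\bbz_k$. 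Hence each fibre of $M \to N$ is this lens space.

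To see that $M \to N$ is a genuine (locally trivial) lens space bundle, I would exhibit it as a bundle associated to the unitary frame bundle of $E$. The scalar circle $U(1) = \{\lambda\,\mathrm{Id}\} \subset U(r)$ acts on $S^{2r-1}$ exactly by the Hopf action and therefore commutes with the central subgroup $\bbz_k$; consequently the whole group $U(r)$ descends to an action on $L(k;1,\dots,1) = S^{2r-1}/\bbz_k$. If $P \to N$ denotes the principal $U(r)$-bundle of unitary frames of $E$, the associated bundle $P \times_{U(r)} L(k;1,\dots,1)$ is a lens space bundle over $N$, and over a contractible trivialising set $U \subset N$ (on which $\Lambda$ below is trivial) one has $M|_{\pi^{-1}(U)} \cong U \times L(k;1,\dots,1)$, with clutching functions induced by the $U(r)$-valued transition functions of $E$. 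This identifies $M$ with such an associated bundle and proves the claim.

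The one point that needs care is the discrepancy between $L$ and $\calo_{\bbp(E)}(k)$: by the Leray-Hirsch description of $H^2(\bbp(E))$ one has in general $L \cong \calo_{\bbp(E)}(k) \otimes \pi^*\Lambda$ for some $\Lambda \in \mathrm{Pic}(N)$, so $M$ is the circle bundle of a pullback-twisted line bundle rather than of $\calo(k)$ itself. When $\Lambda$ admits a $k$-th root $\Lambda_0$ this is harmless, since $\calo_{\bbp(E)}(k) \otimes \pi^*\Lambda \cong \calo_{\bbp(E\otimes\Lambda_0)}(k)$ and one simply replaces $E$ by $E \otimes \Lambda_0$; in general the twist still acts on each lens space fibre through the residual Hopf circle $S^1/\bbz_k$, which is central in $U(r)$, so the structure group remains inside $U(r)$ and the associated-bundle description survives. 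Verifying that this twist is absorbed by the central circle, rather than obstructing local triviality, is the main technical step; everything else is standard bundle bookkeeping.
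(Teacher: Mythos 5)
Your proof is correct and follows essentially the same route as the paper: both factor $M\to N$ through $N^{ad}$, restrict the Boothby--Wang circle bundle to a projective fibre $\bbc\bbp^{d_0+d_\infty+1}$, note that this restriction is classified by a positive integer $k$ in $H^2(\bbc\bbp^{d_0+d_\infty+1},\bbz)\approx\bbz$, and identify the resulting total space as the lens space $S^{2d_0+2d_\infty+3}/\bbz_k$ with the diagonal action. The only difference is that you additionally verify local triviality by realising $M$ as a bundle associated to the unitary frame bundle of $E_0\oplus E_\infty$ and absorbing the $\pi^*\Lambda$ twist into the central circle; the paper leaves this bookkeeping implicit, so your version is a slightly more careful rendering of the same argument.
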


\begin{proof}
We have the commutative diagram of oriented fiber bundles 
\begin{equation}\label{cartfib}
\xymatrix{
                & & \bbc\bbp^{d_0+d_\infty+1} \ar[d]  \\
S^1 \ar[r] &M \ar[dr]\ar[r]_{\pi_M}  & N^{ad} \ar[d]^{\pi_N}  \\
&& N 
}
\end{equation}
We claim that the fibers of the southeast arrow are lens spaces. By definition the southeast arrow is the projection $\pi_N\circ \pi_M$ which is clearly smooth. Let $p\in N$, the fiber  $\pi_N^{-1}(p)$ is the projective space $\bbc\bbp^{d_0+d_\infty+1}$, and $\pi_M^{-1}\bigl(\pi_N^{-1}(p)\bigr)$ is an $S^1$ bundle over $\pi_N^{-1}(p)=\bbc\bbp^{d_0+d_\infty+1}$ which is classified by $H^2(\bbc\bbp^{d_0+d_\infty+1},\bbz)\approx \bbz$. Since $M$ has a Sasakian structure, the bundle is determined by some $l\in\bbz^+$. So the fiber of $\pi_N\circ\pi_M$ is a lens space $S^{2d_0+2d_\infty+3}/\bbz_l$ where $l$ is determined by the restriction of the K\"ahler form $\gro$ on $N^{ad}$ to the fibers. The action of $\bbz_l$ on $S^{2d_0+2d_\infty+3}$ is given by 
$$(z_1,\ldots, z_{d_0+d_\infty+2})\mapsto (\grl z_1,\ldots,\grl z_{d_0+d_\infty+2})$$ 
where $\grl$ is an $l$th root of unity.
\end{proof}

We have

\begin{theorem}\label{Mcoh}
Let $M$ be Boothby-Wang bundle over an admissible projective bundle $N^{ad}=\bbp(E_0\oplus E_\infty)\lra N$. Then
\begin{enumerate}
\item $\pi_1(N^{ad})=\pi_1(N)$;
\item $\pi_1(M)$ is an extension of $\pi_1(N)$ by $\bbz_l$;
\item there is a spectral sequence converging to $H^*(M,\bbz)$ with $E_2$ term
$$E^{p,q}_2=H^p\bigl(N,\calh^q(S^{2d_0+2d_\infty+3}/\bbz_l,\bbz)\bigr)$$
where $\calh^q$ is the derived functor sheaf.
Furthermore, if $N$ is simply connected then $E^{p,q}_2=H^p(N,H^q)$ where
$$H^q=
\begin{cases}
\bbz & \text{if $q=0$} \\
\bbz_l & \text{if $q=2,\ldots,2d_0+2d_\infty+2$} ~. \\
\bbz  & \text{if $q=2d_0+2d_\infty+3$} \\
 0   & \text{otherwise}
\end{cases}$$ 
\end{enumerate}
\end{theorem}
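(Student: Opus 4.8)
The plan is to extract all three statements from the fibrations already assembled in diagram \eqref{cartfib}: the projective bundle $\bbc\bbp^{d_0+d_\infty+1}\to N^{ad}\xrightarrow{\pi_N} N$, the Boothby-Wang circle bundle $S^1\to M\xrightarrow{\pi_M}N^{ad}$, and their composite $f:=\pi_N\circ\pi_M\colon M\to N$, which by Proposition \ref{BWsphere} is a lens space bundle with fiber $L_l=S^{2d_0+2d_\infty+3}/\bbz_l$. Parts (1) and (2) come from the homotopy exact sequences of these bundles and part (3) from the Leray spectral sequence of $f$. For (1), the homotopy exact sequence of $\pi_N$ has fiber $\bbc\bbp^{d_0+d_\infty+1}$, which is connected and simply connected; hence $\pi_1$ and $\pi_0$ of the fiber vanish and the segment $\pi_1(\text{fiber})\to\pi_1(N^{ad})\to\pi_1(N)\to\pi_0(\text{fiber})$ collapses to the isomorphism $\pi_1(N^{ad})\cong\pi_1(N)$.

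For (2), I would use the lens space bundle $L_l\to M\xrightarrow{f} N$. Since $\pi_1(L_l)=\bbz_l$ and $L_l$ is connected, the tail of its homotopy exact sequence reads
$$\pi_2(N)\xrightarrow{\ \partial\ }\bbz_l\lra\pi_1(M)\lra\pi_1(N)\lra 1,$$
exhibiting $\pi_1(M)$ as an extension of $\pi_1(N)$ by the image of $\bbz_l$. As a cross-check one can instead run the sequence of the circle bundle $S^1\to M\to N^{ad}$ and substitute $\pi_1(N^{ad})=\pi_1(N)$ from (1); there the generator of $\pi_1(S^1)$ maps to the fiber class of $L_l$, which has order $l$ precisely because the Euler class of the Boothby-Wang bundle restricts to $l$ times a generator on the $\bbc\bbp^{d_0+d_\infty+1}$-fibers, which is how $l$ was defined in Proposition \ref{BWsphere}.

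For (3), I would feed $f$ into the Leray spectral sequence, obtaining $E_2^{p,q}=H^p(N;\calh^q)$ with $\calh^q:=R^qf_*\bbz$ the derived functor (Leray) sheaf, i.e. the sheafification of $U\mapsto H^q(f^{-1}(U);\bbz)$. For a fiber bundle this is the local system on $N$ with stalk $H^q(L_l;\bbz)$ and monodromy given by the $\pi_1(N)$-action on the fiber cohomology. When $N$ is simply connected the monodromy is trivial, so $\calh^q$ is the constant sheaf on $H^q(L_l;\bbz)$ and $E_2^{p,q}=H^p(N;H^q(L_l;\bbz))$. It then remains to record the integral cohomology of $L_l=S^{2d_0+2d_\infty+3}/\bbz_l$, which I would compute from its standard CW structure (one cell in each degree, with cellular boundary maps alternating between $0$ and multiplication by $l$), or equivalently from the Gysin sequence of $S^1\to L_l\to\bbc\bbp^{d_0+d_\infty+1}$. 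This gives $\bbz$ in degrees $0$ and $2d_0+2d_\infty+3$ and $\bbz_l$ in the even degrees strictly between, which is the table in the statement (the intermediate odd degrees contributing $0$).

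The routine ingredients are (1) and the spectral-sequence bookkeeping of (3), and the lens space cohomology is classical. \textbf{The main obstacle is the precise content of (2):} showing that the fiber class has order exactly $l$ in $\pi_1(M)$, i.e. that the connecting map $\partial\colon\pi_2(N)\to\bbz_l$ vanishes so that $\bbz_l\hookrightarrow\pi_1(M)$ is injective and the extension is honest. This is automatic when $\pi_2(N)=0$ (as in the aspherical bases where $N$ is a product of curves of genus $\geq 1$), and in general it requires tracking how the admissible class $[\omega]$ pairs with spherical classes of $N$ through a section of $N^{ad}\to N$, such as $D_0=\bbp(E_0)$: if that pairing lies in $l\bbz$ the kernel is exactly $\bbz_l$, and otherwise it is a proper cyclic quotient. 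I would make this dichotomy explicit while stating (2) as the displayed exact sequence, which holds in all cases.
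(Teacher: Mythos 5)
Your argument is exactly the paper's: its proof of Theorem \ref{Mcoh} consists of two sentences citing the homotopy exact sequences of the bundles in diagram \eqref{cartfib} for (1)--(2) and the Leray spectral sequence of the lens space bundle plus the classical cohomology of lens spaces for (3), so your write-up is a correct, fully detailed version of the same route (and your observation that $H^q(S^{2d_0+2d_\infty+3}/\bbz_l;\bbz)$ equals $\bbz_l$ only in the \emph{even} intermediate degrees is a small but genuine correction to the range as written in the paper's table). Your caveat on item (2) is also warranted: the paper's proof does not discuss the connecting homomorphism $\partial\colon\pi_2(N)\to\bbz_l$, and when $\pi_2(N)\neq 0$ (e.g.\ $N$ a K3 surface as in Example \ref{K3ex}, where $M^7$ is asserted to be simply connected) the image of $\bbz_l$ in $\pi_1(M)$ is only $\bbz_l/\mathrm{im}\,\partial$, so (2) must be read either with ``extension'' in the weak sense of your displayed exact sequence or supplemented by the divisibility condition you identify.
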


\begin{proof}
Items (1) and (2) follow from the exact homotopy sequences of the bundles in Proposition \ref{BWsphere} and its proof. Item (3) follows from the Leray spectral sequence of the oriented lens space bundles of Proposition \ref{BWsphere} together with the well known cohomology of lens spaces \cite{BoTu82}. 
\end{proof}

If we consider the cohomology with $\bbq$ coefficients and the dimension of $N$ is less than $2d_0+2d_\infty+4$, the spectral sequence collapses and we get

\begin{corollary}\label{Mrat}
If $\dim_\bbc N<d_0+d_\infty+2$, the Sasaki manifold $M$ has the rational cohomology groups of the product $S^{2d_0+2d_\infty+3}\times N$.
\end{corollary}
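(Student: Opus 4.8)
The plan is to feed the Leray spectral sequence underlying Theorem \ref{Mcoh}(3) through the field $\bbq$ and to show that it collapses at $E_2$ for degree reasons. Write $s=2d_0+2d_\infty+3$ for the dimension of the lens space fiber $S^{2d_0+2d_\infty+3}/\bbz_l$ of Proposition \ref{BWsphere}. First I would re-run that Leray spectral sequence with $\bbq$ coefficients (equivalently, since $\bbq$ is flat over $\bbz$, tensor the integral $E_2$ page with $\bbq$). The derived functor sheaves $\calh^q(\text{fiber},\bbq)$ are governed by the $\bbq$-cohomology of a lens space, which is $\bbq$ in degrees $0$ and $s$ and vanishes in between; thus all the torsion groups $\bbz_l$ occurring for $q=2,\dots,2d_0+2d_\infty+2$ die. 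Crucially, because the bundles in \eqref{cartfib} are oriented, $\pi_1(N)$ acts trivially on both $H^0(\text{fiber},\bbq)\cong\bbq$ and on the fundamental class spanning $H^{s}(\text{fiber},\bbq)\cong\bbq$; hence the two surviving local systems are each the constant sheaf $\bbq$, and we need not assume $N$ simply connected. What remains is a two-row $E_2$ page,
$$E_2^{p,q}(\bbq)=\begin{cases} H^p(N,\bbq) & q\in\{0,s\},\\ 0 & \text{otherwise.}\end{cases}$$

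Next I would check that every higher differential vanishes. The cohomological differential $d_r\colon E_r^{p,q}\to E_r^{p+r,\,q-r+1}$ can connect the two surviving rows only when $r-1=s$, i.e. for $r=s+1=2d_0+2d_\infty+4$, giving the single candidate $d_{s+1}\colon E_{s+1}^{p,s}\to E_{s+1}^{p+s+1,\,0}$. Its target is a subquotient of $H^{p+s+1}(N,\bbq)$. But the hypothesis $\dim_\bbc N<d_0+d_\infty+2$ gives $\dim_\bbr N=2\dim_\bbc N<2d_0+2d_\infty+4=s+1$, so $H^{p+s+1}(N,\bbq)=0$ for every $p\ge 0$ and this differential vanishes. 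Since all other $d_r$ vanish on row grounds, the sequence degenerates at $E_2$ and $E_\infty=E_2$.

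Finally, working over the field $\bbq$ there are no extension problems, so the associated graded recovers the cohomology outright:
$$H^n(M,\bbq)\cong E_\infty^{n,0}\oplus E_\infty^{\,n-s,\,s}\cong H^n(N,\bbq)\oplus H^{n-s}(N,\bbq).$$
By the K\"unneth formula this is precisely $H^n(S^{s}\times N,\bbq)$, using $H^\bullet(S^{s},\bbq)=\bbq$ in degrees $0$ and $s$, which proves the claim.

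The only genuinely delicate step—and the one I would justify most carefully—is the triviality of the two surviving local coefficient systems when $N$ is not simply connected; here it is the orientability of the bundles in \eqref{cartfib}, rather than $\pi_1(N)=0$, that does the work. Everything else is bookkeeping in a spectral sequence possessing a single candidate differential, which the dimension bound kills.
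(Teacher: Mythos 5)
Your proposal is correct and follows essentially the same route as the paper: pass the Leray spectral sequence of the lens space bundle from Theorem \ref{Mcoh}(3) to $\bbq$ coefficients, observe that only the rows $q=0$ and $q=2d_0+2d_\infty+3$ survive, and note that the sole candidate differential lands in $H^{p+2d_0+2d_\infty+4}(N,\bbq)=0$ by the dimension hypothesis, forcing degeneration at $E_2$ and the K\"unneth-type conclusion. The paper states this collapse in one line, so your careful treatment of the trivial local systems (via orientability) and the single differential is simply a fuller write-up of the same argument.
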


\begin{example}[continuation of Example \ref{V1blowdownexample}]
We want to describe the topology of our Sasaki manifold $M$ in Example \ref{V1blowdownexample}. We have

\begin{lemma}\label{cohN}
The cohomology of the 6-manifold $N=V_1$ is
$$H^q(N,\bbz)
\begin{cases}
\bbz & \text{if $q=0,2,4,6$} \\
\bbz^{42} & \text{if $q=3$} ~. \\
 0   & \text{otherwise}
\end{cases}$$ 

\end{lemma}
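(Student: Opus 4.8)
The plan is to treat $N=V_1$ as exactly what it is: a smooth Fano threefold, realized as a quasismooth sextic hypersurface in $\bbc\bbp(\bfw)=\bbc\bbp(1,1,1,2,3)$. For such a variety almost all of the cohomology is forced by general-position arguments, so the only genuine content is the middle Betti number $b_3$ and the absence of torsion. First I would record that a smooth Fano manifold is simply connected, so $H^1(N,\bbz)=0$ and $H^5(N,\bbz)=0$ (the latter by Poincar\'e duality on the closed oriented $6$-manifold $N$, since $H_1=0$). The Lefschetz hyperplane theorem for quasismooth weighted hypersurfaces identifies $H^k(N,\bbq)$ with $H^k(\bbc\bbp^4,\bbq)$ for $k\le 2$, giving $H^0=H^2=\bbq$; Poincar\'e duality then yields rank-one $H^4=H^6$. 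Thus $b_0=b_2=b_4=b_6=1$ and $b_1=b_5=0$, and because $H^1=0$ the Lefschetz decomposition reduces $H^3$ to its primitive part $H^3_{\mathrm{prim}}$.

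The main step is the computation of $b_3=\dim H^3_{\mathrm{prim}}$ via the Griffiths--Steenbrink description of the Hodge structure of a weighted projective hypersurface. With $f=z_0^6+z_1^6+z_2^6+z_3^3+z_4^2$ the Jacobian ideal is $J_f=(z_0^5,z_1^5,z_2^5,z_3^2,z_4)$, so the graded Milnor ring is $R\cong\bbc[z_0,z_1,z_2,z_3]/(z_0^5,z_1^5,z_2^5,z_3^2)$ with weights $(1,1,1,2)$ and Poincar\'e series $P(t)=(1+t+t^2+t^3+t^4)^3(1+t^2)$. Setting $\sigma=\sum_i w_i=8$ and $d=6$, the formula $h^{3-q,q}_{\mathrm{prim}}=\dim R_{(q+1)d-\sigma}$ gives $h^{3,0}=h^{0,3}=\dim R_{-2}=0$ and $h^{2,1}=h^{1,2}=\dim R_{4}$. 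A short count of the weighted monomials of degree $4$ (equivalently the coefficient of $t^4$ in $P(t)$) yields $\dim R_4=21$, whence $b_3=2\cdot 21=42$. As a sanity check this reproduces the known $h^{1,2}(V_1)=21$ of the degree-one del Pezzo threefold, and the same recipe recovers $h^{1,2}=5$ for the cubic threefold and $h^{1,2}=10$ for the quartic double solid.

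Finally I would upgrade the rational statement to the integral one. Universal coefficients give $H^1(N,\bbz)=0$, and since $H^1(\calo_N)=H^2(\calo_N)=0$ (Kodaira vanishing on the Fano $N$) the exponential sequence identifies $H^2(N,\bbz)$ with the torsion-free group $\mathrm{Pic}(N)$, so $H^2=H^4=\bbz$. It remains to rule out torsion in the middle; here I would invoke the fact that the integral cohomology of a smooth Fano threefold is torsion-free (part of the Iskovskikh--Mori--Mukai classification), which forces $H^3(N,\bbz)=\bbz^{42}$ and, via Poincar\'e duality together with universal coefficients, removes any torsion from $H_2,H_3,H_4$; assembling these gives the asserted groups. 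The main obstacle is genuinely these two non-formal inputs: the $b_3=42$ count, which rests on setting up Griffiths--Steenbrink correctly in the weighted setting (getting the shift $\sigma$ and the grading of $R$ right), and the torsion-freeness, for which the cleanest route is to cite the Fano threefold classification rather than attempt a direct cell-structure argument on the weighted hypersurface.
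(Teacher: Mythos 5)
Your argument is correct, and it reaches the two non-formal facts (torsion-freeness and $b_3=42$) by a genuinely different route than the paper. For $q\neq 3$ both proofs use the Lefschetz hyperplane theorem for weighted hypersurfaces (you add the Poincar\'e-duality and exponential-sequence bookkeeping explicitly, which the paper leaves implicit). The real divergence is in the middle dimension: the paper computes the Alexander polynomial of the Brieskorn $7$-manifold $M_\bfa$, $\bfa=(6,6,6,3,2)$, by the Milnor--Orlik method to get $H^3(M,\bbq)=\bbq^{42}$, and then transfers this to $N$ via the Gysin sequence of the $S^1$-bundle (using $H^1(N)=0$ and injectivity of cup product with the Euler class); you instead apply Griffiths--Steenbrink to the Jacobian ring $R=\bbc[z_0,z_1,z_2,z_3]/(z_0^5,z_1^5,z_2^5,z_3^2)$ and count $\dim R_4=21$, giving $h^{2,1}=h^{1,2}=21$. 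The two computations are cousins -- both ultimately read off the cohomology of the Milnor fiber of the cone singularity -- but yours has the bonus of producing the Hodge decomposition of $H^3$, while the paper's fits more naturally into its Sasakian/Brieskorn framework (the link $M_\bfa$ is already on stage there). For torsion-freeness the paper cites Dimca's result that smooth hypersurfaces in weighted projective spaces have torsion-free cohomology, which is the cleaner citation here; your appeal to the Fano threefold classification also works but is heavier machinery for the same conclusion. Your numerical cross-checks ($h^{1,2}=21$ for the degree-one del Pezzo threefold, $5$ for the cubic, $10$ for the quartic double solid) are all consistent with the literature.
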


\begin{proof}
First we note that since $N$ is a smooth hypersurface in a weighted projective space, its cohomology is torsionfree \cite{Dim92}, and by the Lefschetz hyperplane theorem it has the cohomology of projective space for $q\neq 3$. For $q=3$ we can compute the Alexander polynomial of the Brieskorn 7-manifold $M$ over $N$ by the Milnor-Orlik method described in Section 9.3 of \cite{BG05}. This gives $H^3(M,\bbq)=\bbq^{42}$. But then an easy spectral sequence or Gysin sequence argument gives $H^3(N,\bbq)\approx H^3(M,\bbq)$ which gives the result since the cohomology of $N$ is torsionfree.
\end{proof}

Now continuing Example \ref{V1blowdownexample} we have $d_0=0$, and $\dim_\bbc N=3$. We shall assume that $d_\infty>0$, since the no blowdown case gives rise to a join which has been studied elsewhere \cite{BoTo14a,BoTo19a}. Moreover, if $d_\infty>1$ we can apply Corollary \ref{Mrat} which implies that $H^*(M,\bbq)\approx H^*(S^{2d_\infty +3}\times N,\bbq)$. If we can take $l=1$ in the proof of Proposition \ref{BWsphere}, then $M$ will be an $S^{2d_\infty +3}$ bundle over $N$. So in this case when $d_\infty>1$ the integral  cohomology spectral sequence of the bundle collapses giving the cohomology groups of the product $S^{2d_\infty +3}\times N$. 

 As briefly indicated at the end of Section \ref{cscsect}, there is a constraint on the index of $N$. It follows from Theorem 3.1 in \cite{ACGT08b} that, in order for $N^{ad}$ to be Fano, we need the index $\cali_N$ of $N$ to be at least $d_0+2$, so we can choose the K\"ahler form $\omega_1$ on $N$ in a way that its scalar curvature satisfies $s_1>d_0+1$. Thus, we take $N$ to be any of the families given by $1.11,1.12,1.13,1.16$ in the big table of \cite{Fanos}. These are smooth hypersurfaces in weighted projective spaces. In all these cases the cohomology of $N$ is torsionfree, and can differ from the cohomology of $\bbc\bbp^3$ only in the middle dimension. The 3rd Betti number can be computed using the Alexander polynomial as mentioned above. One can check the big table in Section 6 of \cite{Fanos} that these projective varieties $N$ are all polystable, and thus admit a KE metric. We indicate this in the following table of smooth Fano 3-folds.

\begin{center}
\begin{tabular}{ | c | c | c | c | c | c | c | c |}
\hline
$d$ &$\qquad \bfa$ & $ \bfw$ & $\cali_N$ & $H^3(N,\bbz)$ & KE  \\ \hline
 2 &$(2,2,2,2,2)$ & $(1,1,1,1,1)$&3 & $0$  & yes  \\ \hline
 3 &$(3,3,3,3,3)$ & $(1,1,1,1,1)$ &2& $\bbz^{10}$ & yes \\ \hline
4 & $(4,4,4,4,2)$ & $(1,1,1,1,2)$ &2 & $\bbz^{20}$ & yes \\ \hline
6 & $(6,6,6,3,2)$  & $(1,1,1,2,3)$ &2 & $\bbz^{42} $& yes \\ \hline
\end{tabular}
\end{center}
\bigskip

\begin{remark}
Applying these $N$ to Corollary \ref{SEexistence} and using Proposition \ref{BWsphere} proves the existence of SE metrics on certain lens space bundles over these smooth Fano 3-folds. 
\end{remark}

\end{example}

\appendix

\section{Constant weighted scalar curvature on admissible K\"ahler manifolds}

In this appendix we will show how the technique we used in the proof of Theorem \ref{CSCexistence}  can be adapted to 
an existence result for K\"ahler metrics of constant weighted scalar curvature. The notion of {\em constant weighted scalar curvature} was introduced more generally by A. Lahdili
in \cite{Lah19}, see also \cite{Ino22}.

Suppose that $\Omega$ is any admissible K\"ahler class on an admissible manifold $N^{ad}$ which fibers over a local product\footnote{See Definition \ref{adkahman} for what we mean by ``local product'' here.} of nonnegative CSCK metrics. 
We will show that when the value of the weight is sufficiently large, there exists an admissible K\"ahler metric in $\Omega$ with constant weighted scalar curvature. Specifically we prove the following theorem:

\begin{theorem}\label{WCSCexistence}
Suppose $\Omega$ is any admissible K\"ahler class on the admissible manifold $N^{ad}=\bbp(E_0 \oplus E_{\infty}) \lra N$, where $N$ is a compact K\"ahler manifold which is a local product of nonnegative CSCK metrics. If $p > \max\{m+1,2d_0+2,2d_\infty +2\}$, then there always exist some $c\in (-1,1)$ such that the corresponding admissible $(c\,\gz+1,p)$-extremal metric in $\Omega$ has constant $(c\, \gz+1,p)$-scalar curvature.
\end{theorem}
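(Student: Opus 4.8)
The plan is to mirror the proof of Theorem~\ref{CSCexistence}, but with the Einstein--Hilbert functional replaced by one adapted to the weighted problem. By the existence statement recalled before the theorem (Proposition~11 of \cite{ACGT08} together with the second half of Theorem~1 of \cite{ApMaTF18}), for every $c\in(-1,1)$ and every $p$ the class $\Omega$ carries an admissible $(c\,\gz+1,p)$-extremal metric, whose $(c\,\gz+1,p)$-scalar curvature is the affine function $A_1\gz+A_2$ with $(A_1,A_2)$ the unique solution of \eqref{A1andA2}. This metric has \emph{constant} $(c\,\gz+1,p)$-scalar curvature exactly when $A_1=0$. Solving \eqref{A1andA2} by Cramer's rule — the coefficient determinant $\alpha_{1,-(1+p)}^2-\alpha_{0,-(1+p)}\alpha_{2,-(1+p)}$ is, up to sign, the Gram determinant of $\{1,t\}$ against the positive measure $(c\,t+1)^{-(1+p)}p_c(t)\,dt$ and is therefore nonzero for $|c|<1$ — one sees that $A_1=0$ is equivalent to
\begin{equation}\label{Acond}
\alpha_{1,-(1+p)}\,\beta_{0,1-p}-\alpha_{0,-(1+p)}\,\beta_{1,1-p}=0 .
\end{equation}
Thus it suffices to exhibit one $c\in(-1,1)$ satisfying \eqref{Acond}.

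To detect \eqref{Acond} variationally I would introduce the function
\[
\calh(c)=\frac{\bigl(\beta_{0,2-p}\bigr)^{p}}{\bigl(\alpha_{0,-p}\bigr)^{p-2}} .
\]
Exactly as in the proof of Theorem~\ref{CSCexistence}, the assumption that $N$ is a local product of \emph{nonnegative} CSCK metrics forces $\beta_{0,2-p}>0$; since also $\alpha_{0,-p}>0$ and $p>2$, the function $\calh$ is smooth and strictly positive on $(-1,1)$. Differentiating by means of the identities $\tfrac{d}{dc}\alpha_{r,k}=k\,\alpha_{r+1,k-1}$ and $\tfrac{d}{dc}\beta_{r,k}=k\,\beta_{r+1,k-1}$ (immediate from \eqref{alphabeta}, and specializing to the formulas used for Theorem~\ref{CSCexistence}), and then collapsing the resulting bracket with the recursions $c\,\alpha_{r+1,k}+\alpha_{r,k}=\alpha_{r,k+1}$ and $c\,\beta_{r+1,k}+\beta_{r,k}=\beta_{r,k+1}$, one obtains
\[
\calh'(c)=p(p-2)\,\frac{\bigl(\beta_{0,2-p}\bigr)^{p-1}}{\bigl(\alpha_{0,-p}\bigr)^{p-1}}\Bigl(\alpha_{1,-(1+p)}\beta_{0,1-p}-\alpha_{0,-(1+p)}\beta_{1,1-p}\Bigr).
\]
The prefactor being strictly positive, the interior critical points of $\calh$ are precisely the solutions of \eqref{Acond}.

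It then remains to show that $\calh$ has a critical point in $(-1,1)$, and for this I would prove $\lim_{c\to\pm1^{\mp}}\calh(c)=+\infty$: a smooth positive function blowing up at both ends of the interval must attain an interior minimum. This endpoint analysis is the main obstacle, and it is where the hypothesis on $p$ is consumed. Localizing the integrals defining $\alpha_{0,-p}$ and $\beta_{0,2-p}$ near $t=-1$ and keeping track of the factor $(1+t)^{d_0}$ in $p_c$, one expects, generalizing the estimates of Theorem~\ref{CSCexistence} that are packaged in Lemma~\ref{betatrend}, the leading behaviour $\alpha_{0,-p}\sim C\,(1-c)^{-(p-d_0-1)}$ and $\beta_{0,2-p}\sim C'\,(1-c)^{-(p-d_0-2)}$ with $C,C'>0$; the net power of $(1-c)$ in $\calh$ is then $-(p-2d_0-2)$, so the blow-up at $c\to1^{-}$ holds as soon as $p>2d_0+2$, and symmetrically the limit $c\to-1^{+}$ requires $p>2d_\infty+2$. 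The remaining condition $p>m+1$ is precisely what Lemma~\ref{betatrend} needs for these expansions to be valid with nonvanishing leading coefficients (the repeated integration by parts that turns $\alpha_{0,-p}$ and $\beta_{0,2-p}$ into rational functions of $c$ must lower the degree-$(m-1)$ polynomial $p_c$ against a sufficiently negative power). Granting the blow-up, any interior critical point $c$ of $\calh$ satisfies \eqref{Acond}, hence $A_1=0$, and the corresponding admissible $(c\,\gz+1,p)$-extremal metric has constant $(c\,\gz+1,p)$-scalar curvature, as required.
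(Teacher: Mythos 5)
Your proposal is correct and follows essentially the same route as the paper: the function $\calh(c)=\bigl(\beta_{0,2-p}\bigr)^{p}/\bigl(\alpha_{0,-p}\bigr)^{p-2}$ is exactly the $p$-th power of the weighted Einstein--Hilbert functional of Futaki--Ono used there, the derivative computation identifying interior critical points with solutions of $A_1=0$ is the same, and the endpoint asymptotics you sketch are precisely the content of Lemmas~\ref{alphatrend} and \ref{betatrend}, consumed in the same way to force blow-up at $c\to\pm1$ under the stated hypothesis on $p$. The only additions beyond the paper's argument are cosmetic (e.g.\ the Gram-determinant justification that the linear system \eqref{A1andA2} is uniquely solvable).
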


Note that Lemmas \ref{alphatrend} and \ref{betatrend} below supply additional technical details for the proof of Theorem \ref{CSCexistence}. In Section \ref{but} we will touch on the limitations of Theorem \ref{WCSCexistence}, by exploring an example that falls outside of the realm of Theorem \ref{WCSCexistence}.

\begin{proof}

Assume that we have an admissible manifold $N^{ad}$ as above with the property that $N$ is a local product of {\em non-negative} CSCK metrics and let $\Omega$ be an admissible K\"ahler class on $N^{ad}$. As we discussed 
in the beginning of Section \ref{E&NE}, we know that $\Omega$ admits an admissible $(c\,\gz+1,p)$-extremal metric for any choice of $c\in (-1,1)$ and $p\in \bbr$. 
Recall that the admissible $(c\,\gz+1,p)$-extremal metric has $(c\,\gz+1,p)$-scalar curvature $Scal_{c\,\gz +1,p}=A_1\gz+A_2$, where, $A_1$ and $A_2$ are given \eqref{A1andA2}
and \eqref{alphabeta}. 
Thus admissible $(c\,\gz+1,p)$-extremal metric has, $g$ has constant weighted scalar curvature, $Scal_{c\,\gz +1,p}$, if and only if 
$A_1=0$. In turn the equation $A_1=0$ may be re-written as    
\begin{equation}\label{cweighted}       
\alpha_{1,-(1+p)}\beta_{0,(1-p)}-\alpha_{0,-(1+p)}\beta_{1,(1-p)}=0.                
\end{equation}

As already mentioned in the proof of Theorem \ref{CSCexistence} we know that 
\begin{itemize}
\item $c\, \alpha_{r+1,k}+\alpha_{r,k}=\alpha_{r,k+1}$
\item $c\, \beta_{r+1,k}+\beta_{r,k}=\beta_{r,k+1}$.
\end{itemize}
Further, it is easy to check the general formulas below:
\begin{itemize}
\item $\frac{d}{dc}\left[\alpha_{r,k}\right]=k\alpha_{r+1,k-1}$
\item  $\frac{d}{dc}\left[\beta_{r,k}\right]=k \beta_{r+1,k-1}$,
\end{itemize}
These observations are used in the calculations below.

We now turn our attention to the {\em weighted Einstein-Hilbert functional} as defined by Futaki and Ono in \cite{FutOno20}:
In the present case, (the appropriate restriction of) this weighted Einstein-Hilbert functional is given by
$$WH_K(c):=\frac{S_c}{V_c^{\frac{p-2}{p}}},$$
where 
$V_c= \int_{N^{ad}}(c\,\gz+1)^{-p}\,d\mu_g$ and $S_c= \int_{N^{ad}} Scal_{c\,\gz+1,p}(g) (c\,\gz+1)^{-p}\,d\mu_g$. Using the formulas from
\cite{ApMaTF18}, it is not hard to check that, up to an overall positive rescale, $WH_K(c)= \frac{\beta_{0,2-p}}{\left(\alpha_{0,-p}\right)^{\frac{p-2}{p}}}$.
To simplify things a bit, we will actually work with $H_K(c):=[WH_K(c)]^p$, that is,
$$H_K(c)= \frac{\left(\beta_{0,2-p}\right)^{p}}{\left(\alpha_{0,-p}\right)^{p-2}}.$$ Note that while $H_S(c)$ from the proof of Theorem \ref{CSCexistence} is analogous to $H_K(c)$, it is NOT a special case of $H_K(c)$ and for $p=m+2$, the two functions are not equal.
We easily verify that 
$$
\begin{array}{ccl}
H_K'(c) & = & \frac{p(2-p)\left(\alpha_{0,-p}\right)^{p-2}\left(\beta_{0,2-p}\right)^{(p-1)}\beta_{1,1-p} - (p-2)(-p)(\alpha_{0,-p})^{(p-3)}\alpha_{1,-(1+p)}\left(\beta_{0,2-p} \right)^{p}}{\left(\alpha_{0,-p}\right)^{2(p-2)}}\\
\\
&= &\frac{p(p-2)(\beta_{0,2-p})^{p-1} (\alpha_{0,-p})^{p-3}}{\left(\alpha_{0,-p}\right)^{2(p-2)}}\left[ \alpha_{1,-(1+p)}\beta_{0,2-p}-\alpha_{0,-p}\beta_{1,1-p} \right]\\
\\
&= &\frac{p(p-2)(\beta_{0,2-p})^{p-1} (\alpha_{0,-p})^{p-3}}{\left(\alpha_{0,-p}\right)^{2(p-2)}}\left[\alpha_{1,-(1+p)}\beta_{0,(1-p)}-\alpha_{0,-(1+p)}\beta_{1,(1-p)}\right].
\end{array}
$$
Now $\alpha_{0,-p}>0$ and $\beta_{0,2-p}> 0$ (the latter is due to the fact that we are assuming $N$ is a local product of {\em nonnegative} CSCK metrics) and therefore critical
points of $H_K(c)$ correspond exactly to solutions of Equation \eqref{cweighted}. Showing that for $p > \max\{m+1,2d_0+2,2d_\infty +2\}$, $H_K(c)$ must have a least one critical point $c\in(-1,1)$, would finish the proof.

To that end, first notice that $H_K(c)$ is a smooth positive function for $c\in (-1,1)$. To show that $H(c)$ has a critical point inside $(-1,1)$, we will show that $\displaystyle \lim_{c\rightarrow \pm 1^{\mp}}H_K(c) = +\infty$. We need two lemmas. Let $o(x)$ denote any function which is smooth in a neighborhood of $x=0$ and satisfies that $o(0)=0$.
We shall recycle this notation for several functions of this nature.

\begin{lemma}\label{alphatrend}
For any $k \geq m+1$,
$$\alpha_{0,-k}=\displaystyle \frac{\delta_0(c) + o(1-c)}{(1-c)^{k-1-d_0}}=\frac{\delta_\infty(c) + o(1+c)}{(1+c)^{k-1-d_\infty}},$$
where $\delta_0$ and $\delta_\infty$ are smooth functions defined in open neighborhoods of $c=1$ and $c=-1$ respectively, such that 
$\delta_0(1)>0$ and $\delta_\infty(-1)>0$.
\end{lemma}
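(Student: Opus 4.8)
The plan is to analyze the single integral $\alpha_{0,-k} = \int_{-1}^1 (c\,t+1)^{-k} p_c(t)\,dt$ directly, reusing the integration-by-parts computation already carried out for $\alpha_{0,-(m+1)}$ in the proof of Theorem \ref{CSCexistence}. The two displayed equalities are symmetric under $t\mapsto -t$ (which swaps $-1\leftrightarrow 1$, $d_0\leftrightarrow d_\infty$, $p_0\leftrightarrow p_\infty$ and $c\leftrightarrow -c$), so I would prove the statement near $c=1$ and obtain the statement near $c=-1$ by the same argument. Throughout I would use the factorizations $p_c(t)=(1+t)^{d_0}p_0(t)=(1-t)^{d_\infty}p_\infty(t)$ with $p_0(-1)>0$, $p_\infty(1)>0$, and both $p_0,p_\infty$ positive on $(-1,1)$, exactly as recalled in that proof.

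First I would note that for $c\in(-1,1)$ the integrand $(1+ct)^{-k}p_c(t)$ is strictly positive on $(-1,1)$, so $\alpha_{0,-k}>0$ there and, as a function of $c$, it can only blow up at the endpoints $c=\pm 1$, where the factor $1+ct$ degenerates (at $t=-1$ as $c\to 1^-$, at $t=1$ as $c\to -1^+$). To find the order of the blow-up at $c=1$ I would integrate by parts, each step raising the exponent $-k$ by one while differentiating $p_c$; since $\deg p_c=m-1$, after $m$ steps the residual integral vanishes and $\alpha_{0,-k}$ becomes a finite sum of boundary terms, each of the form $(1\pm c)^{-(k-j)}$ times a rational function of $c$ that is regular at $c=\pm 1$. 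Here $k-j\geq k-m\geq 1$ keeps $-(k-j)\neq 0$ at every step, so all the antiderivatives, and the recursion, are well defined. The crucial point is that $p_c$ vanishes to order exactly $d_0$ at $t=-1$, so the $t=-1$ boundary contributions vanish until the term carrying $p_c^{(d_0)}(-1)=d_0!\,p_0(-1)\neq 0$ appears; this is the most singular surviving term and is proportional to $(1-c)^{-(k-1-d_0)}$.

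It follows that $\alpha_{0,-k}$ is a rational function of $c$ whose only pole near $c=1$ has order exactly $k-1-d_0$, so $(1-c)^{k-1-d_0}\alpha_{0,-k}$ extends analytically across $c=1$; I would take this extension to be $\delta_0(c)$ and absorb the strictly higher-order boundary terms into $o(1-c)$. To pin down the sign of $\delta_0(1)$ I would cross-check the leading coefficient by the substitution $t=-1+\frac{1-c}{c}v$, which near $t=-1$ turns the dominant part of the integral into $p_0(-1)\,c^{-(d_0+1)}(1-c)^{-(k-1-d_0)}\int_0^\infty (1+v)^{-k}v^{d_0}\,dv$ up to lower order. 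The remaining integral is the Beta value $B(d_0+1,\,k-d_0-1)=\frac{d_0!\,(k-d_0-2)!}{(k-1)!}>0$, convergent precisely because $k\geq m+1>d_0+1$; hence $\delta_0(1)=p_0(-1)\,\frac{d_0!\,(k-d_0-2)!}{(k-1)!}>0$. The symmetric argument at $t=1$, using $p_\infty(1)>0$ and the order-$d_\infty$ vanishing there, produces $\delta_\infty$ analytic near $c=-1$ with $\delta_\infty(-1)>0$.

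The main obstacle is bookkeeping rather than conceptual: one must organize the finitely many boundary terms so as to see cleanly that all $t=-1$ contributions of order below $d_0$ cancel and that the surviving leading term carries the asserted power $(1-c)^{-(k-1-d_0)}$ with a manifestly positive coefficient. This is exactly where the hypothesis $k\geq m+1$ is used: since $d_0,d_\infty\leq m-1$, it guarantees $k>d_0+1$ and $k>d_\infty+1$, which makes the dominant power genuinely singular and the Beta integral convergent; for smaller $k$ the leading term could fail to dominate and the claimed normalization would break down.
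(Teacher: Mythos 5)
Your proposal is correct and follows essentially the same route as the paper: repeated integration by parts combined with the factorizations $p_c(t)=(1+t)^{d_0}p_0(t)=(1-t)^{d_\infty}p_\infty(t)$, identifying the leading singular boundary term through the order of vanishing of $p_c$ at $t=\mp 1$. Your Beta-function substitution is just an added cross-check, and it reproduces exactly the paper's leading coefficient $\frac{(k-2-d_0)!\,d_0!\,p_0(-1)}{(k-1)!\,c^{d_0+1}}$.
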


\begin{proof}
By repeated integration by parts, we can see that $\alpha_{0,-k}=  \int_{-1}^1 (c\,t +1)^{-k} \overbrace{p_c(t)}^{\text{degree}\,m-1}dt$ is a rational function of $c$ which is
positive and has no vertical asymptotes within the interval $(-1,1)$. 

First notice that 
$$\alpha_{0,-k} =  \int_{-1}^1 (c\,t +1)^{-k} (1+t)^{d_0} p_0(t)dt =  \int_{-1}^1 (c\,t +1)^{-k} (1-t)^{d_\infty} p_{\infty}(t)dt,$$
where $p_0(t) = p_c(t)/(1+t)^{d_0}$ and $p_\infty(t) = p_c(t)/(1-t)^{d_\infty}$ are polynomials of degree $m-1-d_0$ and $m-1-d_\infty$, respectively. Keep in mind that
the complex dimension $m=\sum_{a\in{\hat\cala}}d_a+1$ (with $d_a>0$ for $a\in \cala$ and $d_a\geq 0$ for $a\in \hat\cala$) and note that $p_0(-1)>0$ and $p_{\infty}(1)>0$. From the definition of $p_c(t)$ we also know that 
$p_0(t)$ and $p_\infty(t)$ are positive for $-1<t<1$. To understand the behavior of $\alpha_{0,-k}$ near $c=\pm 1$, we consider repeated integration by parts and observe that
$$\alpha_{0,-k}=\displaystyle \frac{\left( \frac{(k-2-d_0)! \, d_0! \, p_0(-1)}{(k-1)! \, c^{d_0+1}} + o(1-c)\right)}{(1-c)^{k-1-d_0}}=\frac{\left( \frac{(-1)^{d_\infty+1}(k-2-d_\infty)! \, d_\infty! \, p_\infty(1)}{(k-1)! \, c^{d_\infty+1}} + o(1+c)\right)}{(1+c)^{k-1-d_\infty}}$$
This proves the lemma.
\end{proof}

\begin{lemma}\label{betatrend}
For any $l \geq m$,
$$\beta_{0,-l}=\displaystyle \frac{\gamma_0(c) + o(1-c)}{(1-c)^{l-d_0}}=\frac{\gamma_\infty(c) + o(1+c)}{(1+c)^{l-d_\infty}},$$
where $\gamma_0$ and $\gamma_\infty$ are smooth functions defined in open neighborhoods of $c=1$ and $c=-1$ respectively, such that 
$\gamma_0(1)>0$ and $\gamma_\infty(-1)>0$.
\end{lemma}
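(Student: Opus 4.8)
The plan is to closely follow the proof of Lemma~\ref{alphatrend}, the only new features being that the density multiplying $(c\,t+1)^{-l}$ is no longer $p_c(t)$ but the bracketed sum appearing in $\beta$, and that there are now two explicit boundary terms. First I would record that
$$R(t):=\Big(\sum_{a\in\hat{\cala}}\frac{x_ad_as_a}{1+x_at}\Big)p_c(t)=\sum_{a\in\hat{\cala}}x_ad_as_a\,(1+x_at)^{d_a-1}\prod_{b\neq a}(1+x_bt)^{d_b}$$
is actually a \emph{polynomial} of degree $m-2$, since every $a\in\hat{\cala}$ has $d_a\geq 1$. Hence
$$\beta_{0,-l}=\int_{-1}^1 R(t)\,(c\,t+1)^{-l}\,dt+(1-c)^{-l}p_c(-1)+(1+c)^{-l}p_c(1),$$
which exhibits $\beta_{0,-l}$ as a rational function of $c$ with no poles in $(-1,1)$ (there $c\,t+1>0$) and with possible poles only at $c=\pm1$; this is what gives meaning to the smooth functions $\gamma_0,\gamma_\infty$.

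I would then analyse $c\to1$, the case $c\to-1$ being symmetric under $t\mapsto-t$ together with interchanging the roles of the indices $0$ and $\infty$. The decisive quantity is the vanishing order of $R$ at $t=-1$. Writing $p_0(t)=p_c(t)/(1+t)^{d_0}$, the term $a=0$ equals $d_0s_0(1+t)^{d_0-1}p_0(t)$ while every term with $a\neq0$ retains the full factor $(1+t)^{d_0}$; thus, when $d_0>0$, $R(t)=(1+t)^{d_0-1}\rho_0(t)$ with $\rho_0(-1)=d_0s_0\,p_0(-1)$, whereas when $d_0=0$ the value $R(-1)$ is merely finite. The leading singular behaviour of the integral is then extracted exactly as in Lemma~\ref{alphatrend}: setting $t=-1+u$ and rescaling $u\sim(1-c)v$ (equivalently, integrating by parts $d_0$ times) isolates the contribution $\rho_0(-1)\,(1-c)^{-(l-d_0)}B(d_0,l-d_0)$, where $B(d_0,l-d_0)=\int_0^\infty v^{d_0-1}(1+v)^{-l}\,dv=\tfrac{(d_0-1)!\,(l-d_0-1)!}{(l-1)!}$ is finite precisely because $l\geq m>d_0$; this is exactly where the hypothesis $l\geq m$ enters.

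The genuinely messy part, and the step I expect to be the main obstacle, is the bookkeeping forced by the dichotomy $d_0=0$ versus $d_0>0$: the order of the pole must come out equal to $l-d_0$ in both cases even though the \emph{source} of the leading term is different. If $d_0>0$, then $p_c(-1)=\prod_a(1-x_a)^{d_a}=0$ because the factor $(1-x_0)^{d_0}$ vanishes, so the boundary term $(1-c)^{-l}p_c(-1)$ disappears and the whole pole of order $l-d_0$ comes from the integral; one gets $\gamma_0(1)=s_0\,p_0(-1)\,\tfrac{d_0!\,(l-d_0-1)!}{(l-1)!}>0$, using $s_0=d_0+1>0$ and $p_0(-1)>0$. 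If $d_0=0$, then $R$ does not vanish at $t=-1$, the integral is only $O\big((1-c)^{-(l-1)}\big)$, and the dominant term is the boundary term itself, so $\gamma_0(1)=p_c(-1)>0$. Either way the denominator is $(1-c)^{l-d_0}$, the leading coefficient $\gamma_0(1)$ is positive, and the remaining contributions are smooth and of strictly higher order in $(1-c)$, hence absorbed into $o(1-c)$. Finally, the symmetric computation at $c=-1$ uses $s_\infty=-(d_\infty+1)$, which makes the relevant leading coefficient $-d_\infty s_\infty\,p_\infty(1)=d_\infty(d_\infty+1)\,p_\infty(1)>0$ (or $p_c(1)>0$ when $d_\infty=0$), giving $\gamma_\infty(-1)>0$ and completing the proof.
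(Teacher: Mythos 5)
Your proposal is correct and follows essentially the same route as the paper's proof: the same decomposition of $\beta_{0,-l}$ into the degree-$(m-2)$ polynomial integral plus the two boundary terms, the same dichotomy on $d_0$ (respectively $d_\infty$) with the factorization $R(t)=(1+t)^{d_0-1}\rho_0(t)$ matching the paper's $q_0$, and the same leading coefficients; your beta-integral scaling is just a repackaging of the paper's repeated integration by parts, as you note yourself. Your explicit remark that $p_c(-1)=0$ when $d_0>0$ (so the boundary term drops out and the pole of order $l-d_0$ comes entirely from the integral) is left implicit in the paper but is exactly the right bookkeeping.
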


\begin{proof}
By repeated integration by parts, we can see that 
$$
\begin{array}{ccl} 
\beta_{0,-l} &= & \int_{-1}^1\overbrace{\Big(\sum_{a\in {\hat \cala}} \frac{x_ad_as_a}{1+x_at}\Big)  p_c(t)}^{\text{degree}\, m-2}  (c\,t +1)^{-l} dt \\
\\
& + & \big((1-c)^{-l}p_c(-1) + (1+c)^{-l} p_c(1)\big)
      \end{array}
$$
is a rational function of $c$ which is
non-negative and has no vertical asymptotes within the interval $(-1,1)$. 
Moreover, it is easy to see that if $d_0=0$,
$$\beta_{0,-l} = \frac{\left( p_c(-1)+o(1-c)\right)}{(1-c)^l},$$
with $p_c(-1)>0$ in this case,
and  if $d_\infty=0$,
$$\beta_{0,-l} = \frac{\left( p_c(1)+o(1+c)\right)}{(1-c)^l}$$
with $p_c(1)>0$ in this case.

Now assume that $d_0 \geq 1$ and define
$$q_0(t) := \sum_{a\in {\hat \cala}\setminus \{0\} } \frac{x_ad_as_a}{1+x_at}\frac{p_c(t)}{(1+t)^{d_0-1}} + x_0 d_0 s_0 \frac{p_c(t)}{(1+t)^{d_0}}.$$
The degree of the polynomial $q_0(t)$ is $m-d_0-1$ and 
$$\displaystyle q_0(-1) =  x_0 d_0 s_0 \prod_{a\in {\hat \cala}\setminus \{0\} } (1-x_a)^{d_a} =d_0 (d_0+1) \prod_{a\in {\hat \cala}\setminus \{0\} } (1-x_a)^{d_a} >0.$$ 
By repeated integration by parts we then have
$$
\begin{array}{ccl} 
\beta_{0,-l} &= & \int_{-1}^1 (c\,t +1)^{-l} (1+t)^{d_0-1} q_0(t) dt + (1+c)^{-l} p_c(1) \\
\\
& =&    \displaystyle \frac{\left( \frac{(l-2-(d_0-1))! \, (d_0-1)! \, q_0(-1)}{(l-1)! \, c^{d_0}} + o(1-c)\right)}{(1-c)^{l-d_0}}.   \end{array}
$$

Similarly if $d_\infty \geq 1$ we define
$$q_\infty(t) := \sum_{a\in {\hat \cala}\setminus \{\infty\} } \frac{x_ad_as_a}{1+x_at}\frac{p_c(t)}{(1-t)^{d_\infty-1}} + x_\infty d_\infty s_\infty \frac{p_c(t)}{(1-t)^{d_\infty}}.$$
The degree of the polynomial $q_\infty(t)$ is $m-d_\infty-1$ and 
$$\displaystyle q_\infty(1) =  x_\infty d_\infty s_\infty \prod_{a\in {\hat \cala}\setminus \{\infty\} } (1+x_a)^{d_a} =d_\infty (d_\infty+1) \prod_{a\in {\hat \cala}\setminus \{\infty\} } (1+x_a)^{d_a} >0.$$
By repeated integration by parts we now have
$$
\begin{array}{ccl} 
\beta_{0,-l} &= & \int_{-1}^1 (c\,t +1)^{-l} (1-t)^{d_\infty-1} q_\infty(t) dt + (1-c)^{-l} p_c(-1) \\
\\
& =&    \displaystyle \frac{\left( \frac{(-1)^{d_\infty}(l-2-(d_\infty-1))! \, (d_\infty-1)! \, q_\infty(1)}{(l-1)! \, c^{d_\infty}} + o(c+1)\right)}{(c+1)^{l-d_\infty}}.   \end{array}
$$
From the above observations we have now proved Lemma \ref{betatrend} regardless of the values of $d_0$ and $d_\infty$.
\end{proof}

From Lemmas \ref{alphatrend} and \ref{betatrend} we now have that for $k \geq m+1$ and $l\geq m$,
$$ \frac{\left(\beta_{0,-l}\right)^{k}}{\left(\alpha_{0,-k}\right)^{l}}= \frac{\left(\gamma_0(c) + o(1-c)\right)^k}{\left(\delta_0(c)+o(1-c)\right)^l}\left(1-c\right)^{d_0(k-l)-l}
= \frac{\left(\gamma_\infty(c) + o(1+c)\right)^k}{\left(\delta_\infty(c)+o(1+c)\right)^l}\left(1+c\right)^{d_\infty(k-l)-l}.$$
In particular, for $p\geq m+2$, we see that (letting $l=p-2$ and $k=p$),

$$H_K(c)= \frac{\left(\beta_{0,2-p}\right)^{p}}{\left(\alpha_{0,-p}\right)^{p-2}}=\frac{\left(\gamma_0(c) + o(1-c)\right)^p}{\left(\delta_0(c)+o(1-c)\right)^{p-2}}\left(1-c\right)^{2d_0 +2-p}
= \frac{\left(\gamma_\infty(c) + o(1+c)\right)^p}{\left(\delta_\infty(c)+o(1+c)\right)^{p-2}}\left(1+c\right)^{2d_\infty+2-p}.$$

Therefore, for $p > \max\{m+1,2d_0+2,2d_\infty +2\}$, $\displaystyle \lim_{c\rightarrow \pm 1^{\mp}}H_K(c) = +\infty$. This, together with the fact that $H_K(c)$ is smooth and bounded from below over the interval $(-1,1)$, allows us to conclude that $H_K(c)$ has at least one critical point $c\in (-1,1)$. For this value of $c$, we have $H_K'(c)=0$ and therefore $c$ solves \eqref{cweighted}. 

\end{proof}

Recall that $\displaystyle m=\sum_{a\in{\hat\cala}}d_a+1$. Assuming (as we should) that $\cala \neq \emptyset$, we have that for $p=2m$ the condition
$p > \max\{m+1,2d_0+2,2d_\infty +2\}$ is automatic. As a special case, we thus get the following corollary to Theorem \ref{WCSCexistence}, which in particular confirms Conjecture 1 in \cite{ApMaTF18}. Note that the corollary is also a special case of an existence theorem stated in \cite{Gua23}.

\begin{corollary}\label{EMexistence}
Suppose $\Omega$ is any admissible K\"ahler class on the admissible manifold $N^{ad}=\bbp(E_0 \oplus E_{\infty}) \lra N$, where $N$ is a compact K\"ahler manifold which is a local product\footnote{See Definition \ref{adkahman} for what we mean by ``local product'' here.} of nonnegative CSCK metrics. Then there always exist some $c\in (-1,1)$ such that the corresponding admissible $(c\,\gz+1,2m)$-extremal metric $g$ in $\Omega$ 
has the property that $h=(c\,\gz+1)^{-2}g$ has constant scalar curvature and therefore is conformally K\"ahler Einstein-Maxwell.
\end{corollary}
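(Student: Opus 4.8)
The plan is to obtain this directly from Theorem \ref{WCSCexistence} by specializing the weight to $p=2m$ and then reinterpreting ``constant weighted scalar curvature'' conformally. First I would verify that $p=2m$ falls within the admissible range $p>\max\{m+1,2d_0+2,2d_\infty+2\}$ required by Theorem \ref{WCSCexistence}. Recall that $m=\sum_{a\in\hat\cala}d_a+1$ and that an admissible manifold forces $\cala\neq\emptyset$ with $d_a>0$ for every $a\in\cala$; hence $\sum_{a\in\cala}d_a\geq 1$. Writing $m-1=d_0+d_\infty+\sum_{a\in\cala}d_a$ (with the convention that the $d_0$, $d_\infty$ summands are absent when the corresponding index is not in $\hat\cala$), we get $m\geq 2$, $m>d_0+1$, and $m>d_\infty+1$, so that $2m>m+1$, $2m>2d_0+2$, and $2m>2d_\infty+2$ all hold automatically. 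Therefore Theorem \ref{WCSCexistence} applies with $p=2m$ and yields a value $c\in(-1,1)$ for which the admissible $(c\,\gz+1,2m)$-extremal metric $g$ in $\Omega$ has constant $(c\,\gz+1,2m)$-scalar curvature.

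Next I would translate this into a statement about an ordinary scalar curvature. As recalled in the discussion of $Scal_{f,p}(g)$ in \eqref{weightedscal}, the weight $p=2m$ is distinguished precisely because, with $f=c\,\gz+1$, the quantity $Scal_{f,2m}(g)$ equals the scalar curvature of the conformal Hermitian metric $h=f^{-2}g$. Consequently, the constancy of $Scal_{c\,\gz+1,2m}(g)$ obtained above is equivalent to $h=(c\,\gz+1)^{-2}g$ having constant scalar curvature. It then remains only to check that the data $(g,f,h)$ realize a conformally K\"ahler, Einstein-Maxwell structure: $g$ is K\"ahler by construction, and $f=c\,\gz+1$ is a positive Killing potential. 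Positivity is clear since $|c|<1$ and $\gz$ takes values in $[-1,1]$, so $c\,\gz+1>0$ throughout $N^{ad}$; the Killing property holds because $\gz$ is the moment map for the natural fibrewise $S^1$-action, so its $g$-gradient becomes Killing after rotation by the complex structure, and passing to the affine function $c\,\gz+1$ changes neither the gradient's holomorphicity nor this Killing property. With $g$ K\"ahler, $f$ a positive Killing potential, and $h=f^{-2}g$ of constant scalar curvature, the defining conditions for $h$ to be conformally K\"ahler, Einstein-Maxwell are met, giving the corollary.

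Since the corollary is fundamentally a consequence of the already-established Theorem \ref{WCSCexistence}, I do not expect a genuine obstacle. The only points demanding care are the two identifications: first, that the $p=2m$ specialization of the weighted scalar curvature $Scal_{f,2m}(g)$ coincides with the scalar curvature of $h=f^{-2}g$ (which is exactly the feature singled out when the weighted scalar curvature was introduced); and second, that $f=c\,\gz+1$ satisfies the positive-Killing-potential hypothesis built into the definition of a conformally K\"ahler, Einstein-Maxwell metric. Both are standard within the admissible framework, so once Theorem \ref{WCSCexistence} is in hand the corollary follows immediately.
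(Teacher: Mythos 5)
Your proposal is correct and follows essentially the same route as the paper: verify that $p=2m$ automatically satisfies $p>\max\{m+1,2d_0+2,2d_\infty+2\}$ because $\cala\neq\emptyset$ forces $m\geq d_0+d_\infty+2$, apply Theorem \ref{WCSCexistence}, and invoke the fact that $Scal_{f,2m}(g)$ is the scalar curvature of the Hermitian metric $h=f^{-2}g$. The extra check that $f=c\,\gz+1$ is a positive Killing potential is a harmless addition that the paper leaves implicit in the admissible framework.
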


\subsection{Limitation of Theorem \ref{WCSCexistence}}\label{but}

On a more pessimistic note, if we instead work with $p=m+2$, we cannot assume that $p > \max\{m+1,2d_0+2,2d_\infty +2\}$ is true and thus we cannot appeal to
Theorem \ref{WCSCexistence}. The following example illustrates that, in this case, existence is more unpredictable - even if the underlying admissible manifold is fixed.

\begin{example}
Let $\cala=\{1\}$, $N=N_1=\bbc\bbp^1$, and $s_1=2$ (so $g_1$ is the Fubini-Study metric on $\bbc\bbp^1$). If we then let $d_\infty =1$ and $d_0=3$, we have $m=6$ and $p=8$. Clearly $p=2d_0+2$ and hence $p > \max\{m+1,2d_0+2,2d_\infty +2\}$ is false. Setting $x_1=x$ we have that
each value $0<x<1$ determines a certain admissible K\"ahler class. We will see how the existence of a solution $c\in (-1,1)$ to \eqref{cweighted} depends on the value of
$x$.

We calculate that
$$
\begin{array}{ccl}
f_x(c)&:=& \alpha_{1,-9}\beta_{0,-7}-\alpha_{0,-9}\beta_{1,-7}\\
\\
&=& -12 x + (24 + x + 13 x^2)c+ (-17 - 52 x + x^2)c^2+(-1 + 15 x + 28 x^2)c^3.
\end{array}
$$
Thus solutions $c\in (-1,1)$ to \eqref{cweighted} correspond to roots in $(-1,1)$ of the cubic $f_x(c)$.
This cubic (with real coefficients) has discriminant $D_x$ given by
$$
D_x=-5 (1 + x)^3 (-44352 + 155904 x - 159125 x^2 + 115249 x^3 - 
   90591 x^4 + 49179 x^5).
   $$
We can check numerically that there exists a specific value $\tilde{x} \in (0,1)$ such that $D_{\tilde{x}}=0$ and
for $0<x<\tilde{x}$, $D_x>0$, while for $\tilde{x}<x<1$, $D_x<0$. Note that $\tilde{x} \approx 0.429$.

For $\tilde{x}<x<1$, we know that the cubic
$f_x(c)$ has exactly one real root $\hat{c}$. One can check that for this range of $x$ values, $f_x(1)<1$ and $\displaystyle\lim_{c\rightarrow +\infty}f_x(c)=+\infty$, so it is clear that $\hat{c}>1$ and thus for $\tilde{x}<x<1$, $f_x(c)$ has no roots in $(-1,1)$. Therefore, the admissible K\"ahler classes corresponding to values $\tilde{x}<x<1$ admit no
admissible K\"ahler metrics with constant $(c\,\gz+1,8)$-scalar curvature. 
The question remains whether there could be any K\"ahler metric in those K\"ahler classes with
constant $(f,8)$-scalar curvature, where $f$ is some positive killing potential.

For $0<x< \tilde{x}$, we know that the cubic $f_x(c)$ has three distinct real roots.  We will see that at least one of them will be inside $(-1,1)$.
\begin{itemize}
\item In the case where $0<x< 1/7$, this is completely straightforward since $f_x(-1)=-40 (1 + x)^2<0$ and $f_x(1)=6 (1 - x) (1 - 7 x)>0$. 
\item In the case where $x=1/7$, we have $f_{1/7}(c) = \frac{4}{49} (c-1) \left(21 c^2-278 c+21\right)$ and it is easy to check that
$21 c^2-278 c+21$ has a root in $(-1,1)$.
\item In the case where $1/7<x<\tilde{x}$, we have $f_x(-1)<0$ and $f_x(1)<0$, but we also have that 
$f_x'(-1) =5 (1 + x) (11 + 19 x)>0$, $f_x'(1)=-13 - 58 x + 99 x^2 <0$, and $\displaystyle\lim_{c\rightarrow +\infty}f_x(c)=+\infty$.
Considering the options for the cubic with three distinct real roots, it is clear that two of the roots must be inside $(-1,1)$ and that the third
root is in the interval $(1,+\infty)$.
\end{itemize}

Finally, for $x=\tilde{x}$, we know that the cubic $f_x(c)$ has a multiple root (and all the roots are real). In this case we still have
that $f_{\tilde{x}}(-1)<0$, $f_{\tilde{x}}'(-1)>0$, $f_{\tilde{x}}(1)<0$, $f_{\tilde{x}}'(1)<0$, and $\displaystyle\lim_{c\rightarrow +\infty}f_x(c)=+\infty$.
It is therefore clear that in this case $f_{\tilde{x}}(c)$ has a double real root in the interval
$(-1,1)$ and a single real root in the interval $(1,+\infty)$.
In conclusion, for each admissible K\"ahler class, $\Omega_x$, corresponding to a value $0<x \leq \tilde{x}$, there exists at least one value $c \in (-1,1)$ such that
$\Omega_x$ admits an admissible K\"ahler metric with constant $(c\,\gz+1,8)$-scalar curvature.

\end{example}

\bibliography{ref}
\bibliographystyle{amsalpha} 

\end{document}